\tikzset{cd/.style={matrix of math nodes,row sep=2em,column sep=2.5em,text height=1.5ex,text depth=0.5ex}}
\tikzset{cdar/.style={->,auto,font=\scriptsize,decoration={snake,segment length=5pt,amplitude=1pt}}}
\tikzset{corr/.style={->,sloped,anchor=south,font=\scriptsize}}
\tikzset{odd/.style={decorate}}
\tikzset{blw/.style={anchor=north}}
\newcommand*{\MRref}[2]{ \href{http://www.ams.org/mathscinet-getitem?mr=#1}{MR \textbf{#1}}}
\newcommand*{\arxiv}[1]{\href{http://www.arxiv.org/abs/#1}{arXiv: #1}}
\renewcommand{\PrintDOI}[1]{\href{http://dx.doi.org/#1}{DOI #1}%
  \IfEmptyBibField{pages}{, (to appear in print)}{}}
\newtheorem{theorem}{Theorem}[section]
\newtheorem{corollary}[theorem]{Corollary}
\newtheorem{lemma}[theorem]{Lemma}
\newtheorem{proposition}[theorem]{Proposition}
\newtheorem{assumption}[theorem]{Assumption}
\theoremstyle{definition}
\newtheorem{definition}[theorem]{Definition}
\theoremstyle{remark}
\newtheorem{remark}[theorem]{Remark}
\newtheorem{example}[theorem]{Example}
\numberwithin{equation}{section}
\DeclareMathOperator{\Rep}{R}
\DeclareMathOperator{\Ind}{Ind}
\DeclareMathOperator{\Hom}{Hom}
\DeclareMathOperator{\End}{End}
\DeclareMathOperator{\Ad}{Ad}
\DeclareMathOperator{\tr}{tr}
\DeclareMathOperator{\coker}{coker}
\DeclareMathOperator{\ind}{ind}
\DeclareMathOperator{\lefind}{L-ind}
\DeclareMathOperator{\Eul}{Eul}
\DeclareMathOperator{\sign}{sign}
\newcommand*{\prid}{\mathfrak{p}}
\newcommand*{\intr}[1]{\mathring{#1}}
\newcommand*{\pt}{\textup{pt}}
\newcommand*{\pr}{\textup{pr}}
\newcommand*{\GKK}{\widehat{\textsc{kk}}}
\newcommand*{\Tang}{\textup T}
\newcommand*{\Nor}{\textup N}
\newcommand*{\Unit}{\mathds 1}
\newcommand*{\Un}{\textup U}
\newcommand*{\mono}{\rightarrowtail}
\newcommand*{\epi}{\twoheadrightarrow}
\newcommand*{\opem}{\hookrightarrow}
\newcommand*{\xopem}{\xhookrightarrow}
\newcommand*{\Boot}{\mathcal B}
\newcommand*{\Tri}{\mathcal T}
\newcommand*{\Type}{\mathcal T_1}
\newcommand*{\Cat}{\mathcal C}
\newcommand*{\KK}{\textup{KK}}
\newcommand*{\braid}{\textsf{braid}}
\newcommand*{\N}{\mathbb N}
\newcommand*{\Z}{\mathbb Z}
\newcommand*{\Q}{\mathbb Q}
\newcommand*{\R}{\mathbb R}
\newcommand*{\T}{\mathbb T}
\newcommand*{\C}{\mathbb C}
\newcommand*{\RK}{\textup{RK}}
\newcommand*{\K}{\textup K}
\newcommand*{\KO}{\textup{KO}}
\newcommand*{\Mat}{\mathbb M}
\newcommand*{\Comp}{\mathcal K}
\newcommand*{\Lef}{\mathcal L}
\newcommand*{\Ccat}{\mathcal{C}}
\newcommand*{\Cst}{\textup{C}^*}
\newcommand*{\Cont}{\textup C}
\newcommand*{\id}{\textup{id}}
\newcommand*{\loc}{\textup{loc}}
\newcommand*{\nb}{\nobreakdash}
\newcommand*{\defeq}{\mathrel{\vcentcolon=}}
\newcommand*{\blank}{\textup{\textvisiblespace}}
\newcommand*{\norm}[1]{\lVert#1\rVert}
\newcommand*{\thick}[1]{\langle#1\rangle}
\begin{document}
\title[Equivariant Lefschetz fixed-point formula]{An equivariant Lefschetz fixed-point formula\\for correspondences}

\author{Ivo Dell'Ambrogio}
\address{Laboratoire Paul Painlev\'e, Universit\'e de Lille~1, Cit\'e Scientifique - B\^atiment M2, F-59655 Villeneuve d'Ascq Cedex, France}
\email{ivo.dellambrogio@math.univ-lille1.fr}

\author{Heath Emerson}
\address{Department of Mathematics and Statistics, University
  of Victoria, PO BOX 3045 STN CSC, Victoria, B.\,C., Canada V8W\,3P4}
\email{hemerson@math.uvic.ca}

\author{Ralf Meyer}
\address{Mathematisches Institut and Courant Centre ``Higher order structures,'' Georg-August Universit\"at G\"ottingen, Bunsenstra{\ss}e 3--5, 37073 G\"ottingen, Germany}
\email{rameyer@uni-math.gwdg.de}

\thanks{This research was supported by the Volkswagen Foundation (Georgian--German non-commutative partnership).  Heath Emerson was supported by a National Science and Engineering Research Council of Canada Discovery Grant. Ralf Meyer was supported by the German Research Foundation (Deutsche Forschungsgemeinschaft (DFG)) through the Institutional Strategy of the University of G\"ottingen.}
\subjclass[2010]{19K99, 19K35, 19D55}
\dedicatory{We dedicate this article to Tamaz Kandelaki, who was a coauthor in an earlier version of this article, and passed away in 2012. We will remember him for his warm character and his perseverance in doing mathematics in difficult circumstances.}

\begin{abstract}
  We compute the trace of an endomorphism in equivariant bivariant
  K-theory for a compact group~\(G\) in several ways: geometrically
  using geometric correspondences, algebraically using localisation,
  and as a Hattori--Stallings trace.  This results in an equivariant
  version of the classical Lefschetz fixed-point theorem, which
  applies to arbitrary equivariant correspondences, not just maps.
\end{abstract}

\maketitle

\section{Introduction}
\label{sec:intro}

Here we continue a series of articles by the last two authors
about Euler characteristics and Lefschetz invariants in equivariant
bivariant \(\K\)\nb-theory. These invariants were introduced in \cites{Emerson-Meyer:Euler,
  Emerson-Meyer:Equi_Lefschetz, Emerson-Meyer:Normal_maps,
  Emerson-Meyer:Correspondences, Emerson-Meyer:Dualities}.  The goal
is to compute Lefschetz invariants explicitly in a way that
generalises the Lefschetz--Hopf fixed-point formula.

Let~\(X\) be a smooth compact manifold and \(f\colon X\to X\) a
self-map with simple isolated fixed points.  The Lefschetz--Hopf
fixed-point formula identifies
\begin{enumerate}
\item \label{intro:geom_Lef} the sum over the fixed points of~\(f\), where each fixed point
  contributes~\(\pm1\) depending on its index;
\item \label{intro:homol_Lef} the supertrace of the \(\Q\)\nb-linear,
  grading-preserving map on \(\K^*(X)\otimes\Q\) induced
  by~\(f\).
\end{enumerate}
It makes no difference in~\eqref{intro:homol_Lef} whether we use rational
cohomology or \(\K\)\nb-theory because the Chern character is
an isomorphism between them.

We will generalise this result in two ways.  First, we allow a compact
group~\(G\) to act on~\(X\) and get elements of the representation
ring~\(\Rep(G)\) instead of numbers.  Secondly, we replace self-maps
by self-correspondences in the sense
of~\cite{Emerson-Meyer:Correspondences}.  Sections
\ref{sec:traces_GKK} and \ref{sec:trace_localisation} generalise the
invariants \eqref{intro:geom_Lef} and~\eqref{intro:homol_Lef} respectively to this setting.  The invariant of
Section~\ref{sec:traces_GKK} is local and geometric and
generalises~\eqref{intro:geom_Lef} above; the formulas in Sections
\ref{sec:trace_localisation} and~\ref{sec:Hattori-Stallings} are
global and homological and generalise~\eqref{intro:homol_Lef} (in two
different ways.) The equality of the geometric and homological
invariants is our generalisation of the Lefschetz fixed-point theorem.

A first step is to interpret the invariants \eqref{intro:geom_Lef}
or~\eqref{intro:homol_Lef} in a category-theoretic way in terms of the
trace of an endomorphism of a dualisable object in a symmetric
monoidal category.

Let~\(\Ccat\) be a symmetric monoidal category with tensor
product~\(\otimes\) and tensor unit~\(\Unit\).  An object~\(A\)
of~\(\Ccat\) is called \emph{dualisable} if there is an
object~\(A^*\), called its \emph{dual}, and a natural isomorphism
\[
\Ccat(A\otimes B,C) \cong \Ccat(B,A^*\otimes C)
\]
for all objects \(B\) and~\(C\) of~\(\Ccat\).  Such duality
isomorphisms exist if and only if there are two morphisms \(\eta\colon
\Unit\to A\otimes A^*\) and \(\varepsilon\colon A^*\otimes A\to
\Unit\), called unit and counit of the duality, that satisfy two
appropriate conditions.  Let \(f\colon A\to A\) be an endomorphism
in~\(\Cat\).  Then the \emph{trace} of~\(f\) is the composite
endomorphism
\[
\Unit \xrightarrow{\eta} A\otimes A^* \xrightarrow{\braid}
A^*\otimes A \xrightarrow{\id_{A^*}\otimes f}
A^*\otimes A \xrightarrow{\varepsilon} \Unit,
\]
where~\(\braid\) denotes the braiding isomorphism. In this article we
also call the trace the \emph{Lefschetz index} of the morphism. This
is justified by the following example.

Let~\(\Ccat\) be the Kasparov category~\(\KK\) with its usual tensor
product structure, \(A=\Cont(X)\) for a smooth compact manifold~\(X\),
and \(\hat{f}\in\KK_0(A,A)\) for some morphism. 
We may construct a
dual~\(A^*\) from the tangent bundle or the stable normal bundle
of~\(X\).  In the case of a smooth self-map of~\(X\), and assuming a
certain transversality condition, the trace of the morphism \(\hat{f}\) 
induced by
the self-map equals the invariant~\eqref{intro:geom_Lef}, that is,
equals the number of fixed-points of the map, counted with appropriate
signs.  This is checked by direct computation in Kasparov theory,
see~\cite{Emerson-Meyer:Equi_Lefschetz} for more general results.

This paper springs in part from the reference
\cite{Emerson-Meyer:Equi_Lefschetz}.  A similar invariant to the
Lefschetz index was introduced there, called the \emph{Lefschetz
  class} (of the morphism).  The Lefschetz class for an equivariant
Kasparov endomorphism of~\(X\) was defined as an equivariant
\(\K\)\nb-homology class for~\(X\).  The Lefschetz \emph{index}, that
is, the categorical trace, discussed above, is the Atiyah--Singer
index of the Lefschetz class of~\cite{Emerson-Meyer:Equi_Lefschetz}.

The main goal of this article is to give a global, homological formula
for the Lefschetz index generalising the
invariant~\eqref{intro:homol_Lef} for a non-equivariant self-map.  The
formulation and proof of our homological formula works best for
Hodgkin Lie groups.  A more complicated form applies to all compact
groups.  The article~\cite{Emerson-Meyer:Equi_Lefschetz} also provides
two formulas for the equivariant Lefschetz class whose equality
generalises that of the invariants \eqref{intro:geom_Lef}
and~\eqref{intro:homol_Lef}, but the methods there are completely
different.

The other main contribution of this article is to compute the
geometric expression for the Lefschetz index in the
category~\(\GKK^G\) of geometric correspondences introduced
in~\cite{Emerson-Meyer:Correspondences}.  This simplifies the
computation in Kasparov's analytic theory
in~\cite{Emerson-Meyer:Equi_Lefschetz} and also gives a more general
result, since we can work with general smooth correspondences rather
than just maps.  Furthermore, using an idea of Baum and Block
in~\cite{Baum-Block:Excess}, we give a recipe for composing two smooth
equivariant correspondences under a weakening of the usual
transversality assumption (of~\cite{Connes-Skandalis:Longitudinal}).
This technique is important for computing the Lefschetz index in the
case of continuous group actions, where transversality is sometimes
difficult to achieve, and in particular, aids in describing
equivariant Euler characteristics in our framework.

Section~\ref{sec:traces_GKK} contains our geometric formula for the
Lefschetz index of an equivariant self-correspondence.  Why is there a
nice geometric formula for the Lefschetz index of a self-map in
Kasparov theory?  A good explanation is that Connes and
Skandalis~\cite{Connes-Skandalis:Longitudinal} describe \(\KK\)-theory
for commutative \(\Cst\)\nb-algebras geometrically, including the
Kasparov product; furthermore, the unit and counit of the
\(\KK\)\nb-duality for smooth manifolds have a simple form in this
geometric variant of~\(\KK\).  An equivariant version of the theory
in~\cite{Connes-Skandalis:Longitudinal} is developed
in~\cite{Emerson-Meyer:Correspondences}.  In
Section~\ref{sec:traces_GKK}, we also recall some basic results about
the geometric \(\KK\)\nb-theory introduced
in~\cite{Emerson-Meyer:Correspondences}.  If~\(X\) is a smooth compact
\(G\)\nb-manifold for a compact group~\(G\), then
\(\KK^G_*(\Cont(X),\Cont(X))\) is isomorphic to the geometrically
defined group \(\GKK^G_*(X,X)\).  Its elements are smooth
\emph{correspondences}
\begin{equation}
  \label{eq:intro_smooth_correspondence}
  X\xleftarrow{b} (M,\xi) \xrightarrow{f} X
\end{equation}
consisting of a smooth \(G\)\nb-map~\(b\), a \(\K_G\)\nb-oriented
smooth \(G\)\nb-map~\(f\), and \(\xi\in \K^*_G(M)\).
Theorem~\ref{the:geo_trace_transversal} computes the categorical
trace, or Lefschetz index, of such a correspondence under suitable
assumptions on \(b\) and~\(f\).

Assume first that~\(X\) has no boundary and that \(b\) and~\(f\) are
transverse; equivalently, for all \(m\in M\) with \(f(m)=b(m)\) the
linear map \(Db - Df\colon \Tang_m M\to \Tang_{f(m)} X\) is
surjective.  Then
\begin{equation}
  \label{eq:coincidence_manifold}
  Q \defeq \{m\in M\mid b(m)=f(m)\}
\end{equation}
is naturally a \(\K_G\)\nb-oriented smooth manifold.  We show that the
Lefschetz index is the \(G\)\nb-index of the Dirac operator on~\(Q\)
twisted by \(\xi|_Q\in\K^*_G(Q)\)
(Theorem~\ref{the:geo_trace_transversal}).  More generally, suppose
that the coincidence space~\(Q\) as defined above is merely assumed to
be a smooth submanifold of~\(M\), and that \(x\in \Tang X\) and
\(Df(\xi)=Db(\xi)\) implies that \(\xi\in\Tang Q\).  Then we say that
\(f\) and~\(b\) \emph{intersect smoothly}.  For example, the identity
correspondence, where \(f\) and~\(b\) are the identity maps on~\(X\),
does not satisfy the above transversality hypothesis, but \(f\)
and~\(b\) clearly intersect smoothly.  In the case of a smooth
intersection, the cokernels of the map \(Df-Db\) form a vector bundle
on~\(Q\) which we call the \emph{excess intersection bundle}~\(\eta\).
This bundle measures the failure of transversality of \(f\) and~\(b\).
Let~\(\eta\) be \(\K_G\)\nb-oriented.  Then~\(\Tang Q\) also inherits
a canonical \(\K_G\)\nb-orientation.  The restriction of the Thom
class of~\(\eta\) to the zero section gives a class
\(e(\eta)\in\K^*_G(Q)\).

Then Theorem~\ref{the:geo_trace_transversal} asserts that the
Lefschetz index of the
correspondence~\eqref{eq:intro_smooth_correspondence} with smoothly
intersecting \(f\) and~\(b\) is the index of the Dirac operator on the
coincidence manifold~\(Q\) twisted by \(\xi\otimes e(\eta)\).  This is
the main result of Section~\ref{sec:traces_GKK}.

In Section~\ref{sec:trace_localisation} we generalise the global
homological formula involved in the classical Lefschetz fixed-point
theorem, to the equivalent situation.  This involves completely
different ideas.  The basic idea to use K\"unneth and Universal
Coefficient theorems for such a formula already appears
in~\cite{Emerson:Lefschetz}.  In the equivariant case, these theorems
become much more complicated, however.  The new idea that we need here
is to first localise~\(\KK^G\) and compute the Lefschetz index in the
localisation.

In the introduction, we only state our result in the simpler case of a
Hodgkin Lie group~\(G\).  Then~\(\Rep(G)\) is an integral domain and
thus embeds into its field of fractions~\(F\).  For any
\(G\)\nb-\(\Cst\)-algebra~\(A\), \(\K^G_*(A)\) is a \(\Z/2\)-graded
\(\Rep(G)\)-module.  Thus \(\K^G_*(A;F) \defeq
\K^G_*(A)\otimes_{\Rep(G)} F\) becomes a \(\Z/2\)-graded
\(F\)\nb-vector space.  Assume that~\(A\) is dualisable and belongs to
the bootstrap class in~\(\KK^G\).  Then \(\K^G_*(A;F)\) is
finite-dimensional, so that the map on \(\K^G_*(A;F)\) induced by an
endomorphism \(\varphi\in\KK^G_0(A,A)\) has a well-defined
(super)trace in~\(F\).  Theorem~\ref{the:trace_on_genUnit} asserts
that this supertrace belongs to \(\Rep(G)\subseteq F\) and is equal to
the Lefschetz index of~\(\varphi\).  In particular, this applies if
\(A=\Cont(X)\) for a compact \(G\)\nb-manifold.

The results of Sections \ref{sec:traces_GKK}
and~\ref{sec:trace_localisation} together thus prove the following:

\begin{theorem}
  \label{the:intro_Lefschetz-Hopf}
  Let~\(G\) be a Hodgkin Lie group, let~\(F\) be the field of
  fractions of~\(\Rep(G)\).  Let~\(X\) be a closed
  \(G\)\nb-manifold.  Let \(X \xleftarrow{b} (M, \xi)
  \xrightarrow{f} X\) be a smooth \(G\)\nb-equivariant
  correspondence from~\(X\) to~\(X\) with \(\xi\in\K^{\dim
    M-\dim X}_G(X)\); it represents a class \(\varphi\in
  \GKK^G_0(X,X)\).  Assume that \(b\) and~\(f\) intersect
  smoothly with \(\K_G\)\nb-oriented excess intersection
  bundle~\(\eta\).  Equip \(Q\defeq \{m \in M \mid b(m) =
  f(m)\}\) with its induced \(\K_G\)\nb-orientation.

  Then the \(\Rep(G)\)-valued index of the Dirac operator
  on~\(Q\) twisted by \(\xi|_Q\otimes e(\eta)\) is equal to the
  supertrace of the \(F\)\nb-linear map on
  \(\K^*_G(X)\otimes_{\Rep(G)} F\) induced by~\(\varphi\).
\end{theorem}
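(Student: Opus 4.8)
The plan is to obtain this statement by threading both of its sides through a single intermediate quantity, the categorical trace $\lefind(\varphi)\in\KK^G_0(\Unit,\Unit)=\Rep(G)$, using the two computations of the Lefschetz index established in Sections~\ref{sec:traces_GKK} and~\ref{sec:trace_localisation}.

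First I would fix the categorical setting. Since~$X$ is a closed smooth $G$-manifold, $A\defeq\Cont(X)$ is dualisable in the symmetric monoidal category~$\KK^G$—with a dual constructed from the tangent bundle of~$X$, as recalled in the introduction—and it lies in the bootstrap class. Under the isomorphism $\GKK^G_*(X,X)\cong\KK^G_*(\Cont(X),\Cont(X))$ of~\cite{Emerson-Meyer:Correspondences}, the given smooth correspondence represents a class $\varphi\in\KK^G_0(A,A)$, so its Lefschetz index $\lefind(\varphi)$ is defined.

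Next come the two identifications. On the geometric side, the hypotheses of the theorem—that $b$ and~$f$ intersect smoothly, that the excess intersection bundle~$\eta$ on $Q=\{m\mid b(m)=f(m)\}$ is $\K_G$-oriented, and that $Q$ carries the induced $\K_G$-orientation—are precisely the hypotheses of Theorem~\ref{the:geo_trace_transversal}; invoking that theorem identifies $\lefind(\varphi)$ with the $\Rep(G)$-valued index of the Dirac operator on~$Q$ twisted by $\xi|_Q\otimes e(\eta)$. On the homological side, since~$G$ is a Hodgkin Lie group $\Rep(G)$ is an integral domain with field of fractions~$F$, and dualisability of~$A$ together with bootstrap membership makes $\K^*_G(X;F)=\K^*_G(X)\otimes_{\Rep(G)}F$ a finite-dimensional $\Z/2$-graded $F$-vector space; Theorem~\ref{the:trace_on_genUnit} then identifies $\lefind(\varphi)$ with the supertrace of the $F$-linear endomorphism of this space induced by~$\varphi$ (and asserts in particular that this supertrace already lies in $\Rep(G)$). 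Composing the two identifications gives the asserted equality.

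The main obstacle is not in this final assembly, which is purely formal, but in checking that the running hypotheses of Theorem~\ref{the:intro_Lefschetz-Hopf} genuinely match those required by the two cited results: on the geometric side, that ``smooth intersection'' together with $\K_G$-orientability of~$\eta$ really does induce the $\K_G$-orientation on~$Q$ used in Theorem~\ref{the:geo_trace_transversal}, with the Thom-class restriction $e(\eta)$ entering the twist as claimed; and on the homological side, that dualisability in $\KK^G$ together with membership in the bootstrap class is exactly what both guarantees finite-dimensionality of $\K^*_G(X;F)$ and forces the categorical trace of~$\varphi$ to coincide with the ordinary linear-algebra supertrace of the induced map. Once these bookkeeping points are verified—using the material recalled in Sections~\ref{sec:traces_GKK} and~\ref{sec:trace_localisation}—the theorem follows at once.
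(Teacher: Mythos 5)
Your proposal is correct and follows exactly the route the paper takes: the paper itself proves Theorem~\ref{the:intro_Lefschetz-Hopf} by combining the geometric computation of the categorical Lefschetz index (Theorem~\ref{the:geo_trace_transversal}) with the homological trace formula for Hodgkin Lie groups (Theorem~\ref{the:trace_on_genUnit}, via Theorem~\ref{the:compact_smooth_dualisable}, the symmetric monoidal functor \(\GKK^G\to\KK^G\), and Theorem~\ref{the:Hodgkin_Lie_bootstrap}, as packaged in Corollary~\ref{cor:Lefschetz_Hodgkin_Lie_group}). The bookkeeping points you flag are precisely the content of those cited results, so nothing further is needed.
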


If~\(G\) is a connected Lie group, then there is a finite covering
\(\hat{G}\epi G\) that is a Hodgkin Lie group.  We may turn
\(G\)\nb-actions into \(\hat{G}\)\nb-actions using the projection map,
and get a symmetric monoidal functor \(\KK^G\to\KK^{\hat{G}}\).  Since
the map \(\Rep(G)\to\Rep(\hat{G})\) is clearly injective, we may
compute the Lefschetz index of \(\varphi\in\KK^G_0(A,A)\) by computing
instead the Lefschetz index of the image of~\(\varphi\)
in~\(\KK^{\hat{G}}_0(A,A)\).  By the result mentioned above, this uses
the induced map on \(\K^{\hat{G}}_*(A)\otimes_{\Rep(\hat{G})}
\hat{F}\), where~\(\hat{F}\) is the field of fractions of
\(\Rep(\hat{G})\).  Thus we get a satisfactory trace formula for all
connected Lie groups.  But the result may be quite different from the
trace of the induced map on \(\K^G_*(A)\otimes_{\Rep(G)} F\).

If~\(G\) is not connected, then the total ring of fractions
of~\(G\) is a product of finitely many fields.  Its factors
correspond to conjugacy classes of Cartan subgroups in~\(G\).
Each Cartan subgroup \(H\subseteq G\) corresponds to a minimal
prime ideal~\(\prid_H\) in~\(\Rep(G)\).  The
quotient~\(\Rep(G)/\prid_H\) is an integral domain and embeds
into a field of fractions~\(F_H\).  We show that the map
\(\Rep(G)\to F_H\) maps the Lefschetz index of~\(\varphi\) to the
supertrace of \(\K^H_*(\varphi;F_H)\)
(Theorem~\ref{the:trace_compact_Lie_group}).  It is crucial to
use \(H\)\nb-equivariant \(\K\)\nb-theory here.  The very
simple counterexample~\ref{exa:pt_does_not_generate_for_Ztwo}
shows that there may be two elements
\(\varphi_1,\varphi_2\in\KK^G_0(A,A)\) with different Lefschetz index
but inducing the same map on~\(\K^G_*(A)\).

Thus the generalisation of
Theorem~\ref{the:intro_Lefschetz-Hopf} to disconnected~\(G\)
identifies the image of the index of the Dirac operator
on~\(Q\) twisted by \(\xi|_Q\otimes e(\eta)\) under the
canonical map \(\Rep(G)\to F_H\) with the supertrace of the
\(F_H\)\nb-linear map on \(\K^*_G(X)\otimes_{\Rep(G)} F_H\)
induced by~\(\varphi\), for each Cartan subgroup~\(H\).

The trace formulas in Section~\ref{sec:trace_localisation} require the
algebra~\(A\) on which we compute the trace to be dualisable and to
belong to an appropriate bootstrap class, namely, the class of all
\(G\)\nb-\(\Cst\)-algebras that are \(\KK^G\)-equivalent to a type~I
\(G\)\nb-\(\Cst\)-algebra.  This is strictly larger than the class of
\(G\)\nb-\(\Cst\)-algebras that are \(\KK^G\)-equivalent to a
commutative one, already if~\(G\) is the circle group
(see~\cite{Emerson:Localization_circle}).  We describe the bootstrap
class as being generated by so-called elementary
\(G\)\nb-\(\Cst\)-algebras in Section~\ref{sec:equiv_bootstrap}.  This
list of generators is rather long, but for the purpose of the trace
computations, we may localise~\(\KK^G\) at the multiplicatively closed
subset of non-zero divisors in~\(\Rep(G)\).  The image of the
bootstrap class in this localisation has a very simple structure,
which is described in Section~\ref{sec:localise_bootstrap}.  The
homological formula for the Lefschetz index follows easily from this
description of the localised bootstrap category.

In Section~\ref{sec:Hattori-Stallings}, we give a variant of the
global homological formula for the trace for a Hodgkin Lie
group~\(G\).  Given a commutative ring~\(R\) and an
\(R\)\nb-module~\(M\) with a projective resolution of finite type, we
may define a Hattori--Stallings trace for endomorphisms of~\(M\) by
lifting the endomorphism to a finite type projective resolution and
using the standard trace for endomorphisms of finitely generated
projective resolutions.  This defines the trace of the
\(\Rep(G)\)-module homomorphism \(\K^G_*(\varphi)\colon
\K^G_*(A)\to\K^G_*(A)\) in~\(\Rep(G)\) without passing through a field
of fractions.

\section{Lefschetz indices in geometric bivariant K-theory}
\label{sec:traces_GKK}

The category~\(\GKK^G\) introduced
in~\cite{Emerson-Meyer:Correspondences} provides a geometric
analogue of Kasparov theory.  We first recall some basic facts
about this category and duality in bivariant \(\K\)\nb-theory
from \cites{Emerson-Meyer:Dualities, Emerson-Meyer:Normal_maps,
  Emerson-Meyer:Correspondences} and then compute Lefschetz
indices in it as intersection products.  Later we are going to
compare this with other formulas for Lefschetz indices.  We
also prove an excess intersection formula to compute the
composition of geometric correspondences under a weaker
assumption than transversality.  This formula goes back to Baum
and Block~\cite{Baum-Block:Excess}.

All results in this section extend to the case where~\(G\) is
a proper Lie groupoid with enough \(G\)\nb-vector bundles in the sense
of \cite{Emerson-Meyer:Normal_maps}*{Definition 3.1} because the
theory in \cites{Emerson-Meyer:Dualities, Emerson-Meyer:Normal_maps,
  Emerson-Meyer:Correspondences} is already developed in this
generality.  For the sake of concreteness, we limit our treatment here
to compact Lie groups acting on smooth manifolds.

The results in this section work both for real and complex
\(\K\)\nb-theory.  For concreteness, we assume in our notation that we
are dealing with the complex case.  In the real case, \(\K\) must be
replaced by \(\KO\) throughout.  In particular,
\(\K_G\)\nb-orientations (that is, \(G\)\nb-equivariant Spin\(^\textup
c\)-structures) must be replaced by \(\KO^G\)\nb-orientations (that
is, \(G\)\nb-equivariant Spin structures).  In some examples, we use
the isomorphisms \(\GKK^G_{2n}(\pt,\pt)=\Rep(G)\) and
\(\GKK^G_{2n+1}(\pt,\pt)=0\) for all \(n\in\Z\).  Here \(\Rep(G)\)
denotes the representation ring of~\(G\).  Of course, this is true
only in complex \(\K\)\nb-theory.

\subsection{Geometric bivariant K-theory}
\label{sec:geom_KK}

Like Kasparov theory, geometric bivariant K\nb-theory yields
a category \(\GKK^G\).  Its objects are (Hausdorff) locally compact
\(G\)\nb-spaces; arrows from~\(X\) to~\(Y\) are \emph{geometric
  correspondences} from~\(X\) to~\(Y\) in the sense of
\cite{Emerson-Meyer:Correspondences}*{Definition 2.3}.  These
consist of
\begin{description}
\item[\(M\)] a \(G\)\nb-space;
\item[\(b\)] a \(G\)\nb-map (that is, a continuous
  \(G\)\nb-equivariant map) \(b\colon M\to X\);
\item[\(\xi\)] a \(G\)\nb-equivariant \(\K\)\nb-theory class
  on~\(M\) with \(X\)\nb-compact support (where we view~\(M\)
  as a space over~\(X\) via the map~\(b\)); we write
  \(\xi\in\RK^*_{G,X}(M)\);
\item[\(f\)] a \(\K_G\)\nb-oriented normally non-singular
  \(G\)\nb-map \(f\colon M\to Y\).
\end{description}
Equivariant \(\K\)\nb-theory with \(X\)\nb-compact support and
equivariant vector bundles are defined in
\cite{Emerson-Meyer:Equivariant_K}*{Definitions 2.5 and~2.6}.
If~\(b\) is a proper map, in particular if~\(M\) is compact, then
\(\RK^*_{G,X}(M)\) is the ordinary \(G\)\nb-equivariant (compactly
supported) \(\K\)-theory \(\K^*_G(M)\) of~\(M\).

A \emph{\(\K_G\)\nb-oriented normally non-singular map}
from~\(M\) to~\(Y\) consists of
\begin{description}
\item[\(V\)] a \(\K_G\)\nb-oriented \(G\)\nb-vector bundle on~\(M\),
\item[\(E\)] a \(\K_G\)\nb-oriented finite-dimensional linear
  \(G\)\nb-representation, giving rise to a trivial
  \(\K_G\)\nb-oriented \(G\)\nb-vector bundle \(Y\times E\)
  on~\(Y\),
\item[\(\hat{f}\)] a \(G\)\nb-equivariant homeomorphism from
  the total space of~\(V\) to an open subset in the total space
  of \(Y\times E\), \(\hat{f}\colon V\opem Y\times E\).
\end{description}
We will not distinguish between a vector bundle and its total
space in our notation.

A normally non-singular map \(f=(V,E,\hat{f})\) has an
\emph{underlying map}
\[
M\mono V\xopem{\hat{f}} Y\times E\epi Y,
\]
where the first map is the zero section of the vector
bundle~\(V\) and the third map is the coordinate projection.
This map is called its ``trace''
in~\cite{Emerson-Meyer:Normal_maps}, but we avoid this name
here because we use ``trace'' in a different sense.
The \emph{degree} of~\(f\) is \(d = \dim V-\dim E\).
A wrong-way element \(f_!\in\KK^G_d(\Cont_0(M),\Cont_0(Y))\)
induced by~\(f\) is defined in
\cite{Emerson-Meyer:Normal_maps}*{Section 5.3}).

Our geometric correspondences are variants of those introduced
by Alain Connes and Georges Skandalis
in~\cite{Connes-Skandalis:Longitudinal}.  The changes in the
definition avoid technical problems with the usual definition
in the equivariant case.

The (\(\Z/2\)-graded) geometric \(\KK\)-group \(\GKK^G_*(X,Y)\) is
defined as the quotient of the set of geometric correspondences
from~\(X\) to~\(Y\) by an appropriate equivalence relation, generated
by bordism, Thom modification, and equivalence of normally
non-singular maps.  Bordism includes homotopies for the maps \(b\)
and~\(f\) by \cite{Emerson-Meyer:Correspondences}*{Lemma~2.12}.  We
will use this several times below.  The Thom modification allows to
replace the space~\(M\) by the total space of a \(\K_G\)\nb-oriented
vector bundle on~\(M\).  In particular, we could take the
\(\K_G\)\nb-oriented vector bundle from the normally non-singular
map~\(f\).  This results in an equivalent normally non-singular map
where \(f\colon M\to Y\) is a \emph{special submersion}, that is, an
open embedding followed by a coordinate projection \(Y\times E\epi Y\)
for some linear \(G\)\nb-representation~\(E\).  Correspondences with
this property are called \emph{special}.

The composition in~\(\GKK^G\) is defined as an intersection
product (see Section~\ref{subsec:trans}) if the map \(f\colon
M\to Y\) is such a special submersion.  This turns~\(\GKK^G\)
into a category; the identity map on~\(X\) is the
correspondence with \(f=b=\id_X\) and \(\xi=1\).  The product
of \(G\)\nb-spaces provides a symmetric monoidal structure
in~\(\GKK^G\) (see
\cite{Emerson-Meyer:Correspondences}*{Theorem 2.27}).

There is an additive, grading-preserving, symmetric monoidal
functor
\[
\GKK^G_*(X,Y) \to \KK^G_*(\Cont_0(X),\Cont_0(Y)).
\]
This is an isomorphism if~\(X\) is \emph{normally non-singular}
by \cite{Emerson-Meyer:Correspondences}*{Corollary 4.3}, that
is, if there is a normally non-singular map \(X\to\pt\).  This
means that there is a \(G\)\nb-vector bundle~\(V\) over~\(X\)
whose total space is \(G\)\nb-equivariantly homeomorphic to an
open \(G\)\nb-invariant subset of some linear \(G\)\nb-space.
In particular, by Mostow's Embedding Theorem smooth
\(G\)\nb-manifolds of finite orbit type are normally
non-singular (see~\cite{Emerson-Meyer:Normal_maps}*{Theorem
  3.22}).

Stable \(\K_G\)\nb-orientations play an important technical role in
our trace formulas and should therefore be treated with care.  A
\emph{\(\K_G\)\nb-orientation} on a \(G\)\nb-vector bundle~\(V\) is,
by definition, a \(G\)\nb-equivariant complex spinor bundle for~\(V\).
(This is equivalent to a reduction of the structure group to
\(\textup{Spin}^\textup c\).)  Given such \(\K_G\)\nb-orientations on
\(V_1\) and~\(V_2\), we get an induced \(\K_G\)\nb-orientation on
\(V_1\oplus V_2\); conversely, \(\K_G\)\nb-orientations on \(V_1\oplus
V_2\) and~\(V_1\) induce one on~\(V_2\).

Let \(\xi\in\RK^0_G(M)\) be represented by the formal difference
\([V_1]-[V_2]\) of two \(G\)\nb-vector bundles.  A \emph{stable
  \(\K_G\)\nb-orientation} on~\(\xi\) means that we are given another
\(G\)\nb-vector bundle~\(V_3\) and \(\K_G\)\nb-orientations on both
\(V_1\oplus V_3\) and \(V_2\oplus V_3\).  Since \(\xi=[V_1\oplus
V_3]-[V_2\oplus V_3]\), this implies that~\(\xi\) is a formal
difference of two \(\K_G\)\nb-oriented \(G\)\nb-vector bundles.
Conversely, assume that \(\xi=[W_1]-[W_2]\) with two
\(\K_G\)\nb-oriented \(G\)\nb-vector bundles; then there are
\(G\)\nb-vector bundles \(V_3\) and~\(W_3\) such that \(V_i\oplus
V_3\cong W_i\oplus W_3\) for \(i=1,2\); since~\(W_3\) is a direct summand
in a \(\K_G\)\nb-oriented \(G\)\nb-vector bundle, we may enlarge
\(V_3\) and~\(W_3\) so that~\(W_3\) itself is \(\K_G\)\nb-oriented.
Then \(V_i\oplus V_3\cong W_i\oplus W_3\) for \(i=1,2\) inherit
\(\K_G\)\nb-orientations.  Roughly speaking, stably
\(\K_G\)\nb-oriented \(\K\)\nb-theory classes are equivalent to formal
differences of \(\K_G\)\nb-oriented \(G\)\nb-vector bundles.

A \(\K_G\)\nb-orientation on a normally non-singular map \(f = (V, E,
\hat{f})\) from~\(M\) to~\(Y\) means that both \(V\) and~\(E\) are
\(\K_G\)\nb-oriented.  Since ``lifting'' allows us to replace~\(E\) by
\(E\oplus E'\) and \(V\) by \(V\oplus (M\times E')\), we may assume
without loss of generality that~\(E\) is already \(\K_G\)\nb-oriented.
Thus a \(\K_G\)\nb-orientation on~\(f\) becomes equivalent to one
on~\(V\).  But the chosen \(\K_G\)\nb-orientation on~\(E\) remains
part of the data: changing it changes the \(\K_G\)\nb-orientation
on~\(f\).  By \cite{Emerson-Meyer:Normal_maps}*{Lemma 5.13}, all
essential information is contained in a \(\K_G\)\nb-orientation on the
formal difference \([V]-[M\times E]\in\RK^0_G(M)\), which we call the
\emph{stable normal bundle} of the normally non-singular map~\(f\).
If \([V]-[M\times E]\) is \(\K_G\)\nb-oriented, then we may find a
\(G\)\nb-vector bundle~\(V_3\) such that \(V\oplus V_3\) and
\((M\times E)\oplus V_3\) are \(\K_G\)\nb-oriented.  Since \((M\times
E)\oplus V_3\) is a direct summand in a \(\K_G\)\nb-oriented trivial
\(G\)\nb-vector bundle, we may assume without loss of generality
that~\(V_3\) itself is trivial, \(V_3=M\times E'\), and that already
\(E\oplus E'\) is \(\K_G\)\nb-oriented.  Lifting~\(f\) along~\(E'\)
then gives a normally non-singular map \((V\oplus (M\times E'),
E\oplus E',\hat{f}\times\id_{E'})\), where both \(V\oplus (M\times
E')\) and \(E\oplus E'\) are \(\K_G\)\nb-oriented.  Thus a
\(\K_G\)\nb-orientation on~\(f\) is equivalent to a stable
\(\K_G\)\nb-orientation on the stable normal bundle of~\(f\).

\begin{lemma}
  \label{lem:normal_bundle_smooth}
  If \(f=(V,E,\hat{f})\) is a smooth normally non-singular map with
  underlying map \(\bar{f}\colon M\to Y\), then its stable normal
  bundle is equal to \(\bar{f}^*[\Tang Y] -[\Tang M] \in
  \RK^0_G(M)\).
\end{lemma}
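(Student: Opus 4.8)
The plan is to factor the underlying map $\bar f$ through the total space of the vector bundle~$V$ and to transport the canonical splittings of the tangent bundles in sight along the open embedding~$\hat f$.

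First I would recall that, by definition, $\bar f$ is the composite $M\xrightarrow{s}V\xrightarrow{\hat f}Y\times E\xrightarrow{\pr_Y}Y$, where $s$ is the zero section of~$V$ and $\pr_Y$ is the coordinate projection, so $\bar f=\pr_Y\circ\hat f\circ s$. Since $f$ is smooth, $V$ is a smooth $G$\nb-vector bundle, $s$ is a smooth $G$\nb-embedding, and $\hat f$ is a $G$\nb-diffeomorphism onto an open $G$\nb-invariant subset of $Y\times E$; in particular its differential is a $G$\nb-vector bundle isomorphism $\Tang V\cong\hat f^*\bigl(\Tang(Y\times E)\bigr)$, whence $s^*\Tang V\cong(\hat f\circ s)^*\Tang(Y\times E)$.

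Then I would bring in the two standard descriptions of the tangent bundles involved. On the one hand, $\Tang(Y\times E)\cong\pr_Y^*\Tang Y\oplus\pr_E^*\Tang E$, and $\Tang E\cong E\times E$ is canonically trivial because $E$ is a linear representation; pulling back along $\hat f\circ s$, using $\pr_Y\circ\hat f\circ s=\bar f$ and that the pullback of a trivial bundle is trivial, gives a $G$\nb-vector bundle isomorphism $s^*\Tang V\cong\bar f^*\Tang Y\oplus(M\times E)$. On the other hand, the differential of the bundle projection $V\to M$ restricts over the zero section to a $G$\nb-equivariant surjection $s^*\Tang V\epi\Tang M$ whose kernel is the vertical subbundle along the zero section, canonically isomorphic to~$V$; splitting this extension by a $G$\nb-invariant connection (available since $G$ is compact) gives $s^*\Tang V\cong\Tang M\oplus V$. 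Comparing the two expressions for $s^*\Tang V$ yields $\Tang M\oplus V\cong\bar f^*\Tang Y\oplus(M\times E)$ as $G$\nb-vector bundles over~$M$, hence $[V]-[M\times E]=\bar f^*[\Tang Y]-[\Tang M]$ in $\RK^0_G(M)$; by the discussion preceding the lemma the left-hand side is precisely the stable normal bundle of~$f$.

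The only points needing care, rather than presenting real difficulty, are keeping everything $G$\nb-equivariant — handled throughout by averaging connections and metrics over the compact group~$G$ — and working in representable $\K$\nb-theory, since $M$ and~$Y$ need not be compact, where the virtual classes $[\Tang M]$, $[\Tang Y]$, $[V]$ and their pullbacks behave exactly as over compact spaces. I do not expect an essential obstacle: the content of the lemma is the factorisation of~$\bar f$ together with the elementary fact that the normal bundle of a zero section is the bundle itself.
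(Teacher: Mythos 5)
Your proof is correct and is essentially the paper's own argument: the paper likewise uses that $\hat f$ is an open embedding to identify $\hat f^*(\Tang Y\oplus E)\cong\Tang M\oplus V$ (the canonical decompositions of the tangent bundles of the total spaces of $V$ and $Y\times E$) and then restricts to conclude $\bar f^*(\Tang Y)\oplus(M\times E)\cong\Tang M\oplus V$, hence $[V]-[M\times E]=\bar f^*[\Tang Y]-[\Tang M]$. Your extra care about restricting along the zero section and averaging a connection is fine but not needed beyond what the paper already takes as standard.
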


\begin{proof}
  The tangent bundles of the total spaces of~\(V\) and \(Y\times E\)
  are \(\Tang M\oplus V\) and \(\Tang Y\oplus E\), respectively.
  Since~\(\hat{f}\) is an open embedding, \(\hat{f}^*(\Tang Y\oplus
  E)\cong \Tang M\oplus V\).  This implies \(\bar{f}^*(\Tang Y)\oplus
  (M\times E) \cong \Tang M\oplus V\).  Thus \([V]-[M\times E] =
  \bar{f}^*[\Tang Y]-[\Tang M]\).
\end{proof}

This lemma also shows that the stable normal bundle of~\(f\) and hence
the orientability assumption depend only on the equivalence class
of~\(f\).

Another equivalent way to describe stable \(\K_G\)\nb-orientations is
the following.  Suppose we are already given a \(G\)\nb-vector
bundle~\(W\) on~\(Y\) such that \(\Tang Y\oplus V\) is
\(\K_G\)\nb-oriented.  Then a stable \(\K_G\)\nb-orientation on~\(f\)
is equivalent to one on
\[
[\bar{f}^*V\oplus \Tang M]
= \bar{f}^*[\Tang Y\oplus V] - (\bar{f}^*[\Tang Y]-[\Tang M]),
\]
which is equivalent to a \(\K_G\)\nb-orientation on \(\bar{f}^*V\oplus
\Tang M\) in the usual sense.

If \(X\) and~\(Y\) are smooth \(G\)\nb-manifolds (without
boundary), we may require the maps \(b\) and~\(\hat{f}\) and
the vector bundles \(V\) and~\(E\) to be smooth.  This leads to
a smooth variant of~\(\GKK^G\).  This variant is isomorphic to
the one defined above by
\cite{Emerson-Meyer:Correspondences}*{Theorem 4.8}
provided~\(X\) is of finite orbit type and hence normally
non-singular.

Working in the smooth setting has two advantages.

First, assuming~\(M\) to be of finite orbit type,
\cite{Emerson-Meyer:Normal_maps}*{Theorem 3.22} shows that any
smooth \(G\)\nb-map \(f\colon M\to Y\) lifts to a
smooth normally non-singular map that is unique up to
equivalence.  Thus we may replace normally non-singular maps by
smooth maps in the usual sense in the definition of a geometric
correspondence.  Moreover, \(Nf = f^*[\Tang Y] - [\Tang M]\), so \(f\) is
 \(\K_G\)-oriented if and only if there are \(\K_G\)\nb-oriented
\(G\)\nb-vector
bundles \(V_1\) and~\(V_2\) over~\(M\) with \(f^*[\Tang
Y]\oplus V_1 \cong \Tang M \oplus V_2\) (compare
\cite{Emerson-Meyer:Normal_maps}*{Corollary 5.15}).

Secondly, in the smooth setting there is a particularly elegant
way of composing correspondences when they satisfy a suitable
transversality condition, see
\cite{Emerson-Meyer:Correspondences}*{Corollary 2.39}.  This
description of the composition is due to Connes and
Skandalis~\cite{Connes-Skandalis:Longitudinal}.

\subsection{Composition of geometric correspondences}
\label{subsec:trans}

By \cite{Emerson-Meyer:Correspondences}*{Theorem 2.38}, a smooth
normally non-singular map lifting \(f\colon M_1\to Y\) and a smooth
map \(b\colon M_2\to Y\) are \emph{transverse} if
\[
D_{m_1} f(\Tang_{m_1}M_1) + D_{m_2} b(\Tang_{m_2} M_2)
= \Tang_y Y
\]
for all \(m_1\in M_1\), \(m_2\in M_2\) with \(y\defeq
f(m_1)=b(m_2)\).  Equivalently, the map
\[
Df - Db\colon \pr_1^*(\Tang M_1)\oplus \pr_2^*(\Tang M_2) \to
(f\circ \pr_1)^*(\Tang Y)
\]
is surjective; this is a bundle map of vector bundles over
\[
M_1\times_Y M_2 \defeq \{ (m_1, m_2) \mid f(m_1) = b(m_2)\},
\]
where \(\pr_1\colon M_1 \times_Y M_2\to M_1\) and \(\pr_2\colon
M_1\times_YM_2\to M_2\) denote the restrictions to
\(M_1\times_Y M_2\) of the coordinate projections.  (We shall
always use this notation for restrictions of coordinate
projections.)

A commuting square diagram of smooth manifolds is called
\emph{Cartesian} if it is isomorphic (as a diagram) to a square
\[
\begin{tikzpicture}
  \matrix (m) [cd]{
    M_1\times_Y M_2 &M_2\\M_1&Y\\};
  \begin{scope}[cdar]
    \draw (m-1-1)-- node {$\pr_2$} (m-1-2);
    \draw (m-1-1)-- node[swap] {$\pr_1$} (m-2-1);
    \draw (m-1-2)-- node {$f$} (m-2-2);
    \draw (m-2-1)-- node {$b$} (m-2-2);
  \end{scope}
\end{tikzpicture}
\]
where \(f\) and~\(b\) are transverse smooth maps in the sense above;
then \(M_1\times_Y M_2\) is again a smooth manifold and \(\pr_1\)
and~\(\pr_2\) are smooth maps.

The tangent bundles of these four manifolds are related by an exact
sequence
\begin{equation}
  \label{es1}
  0 \rightarrow\Tang (M_1\times_Y M_2) \xrightarrow{(D\pr_1,D\pr_2)}
  \pr_1^*(\Tang M_1) \oplus \pr_2^*(\Tang M_2)
  \xrightarrow{Df -Db}
  (f\circ \pr_1)^*\Tang Y \rightarrow 0.
\end{equation}
That is, \(\Tang (M_1\times_Y M_2)\) is the sub-bundle of
\(\pr_1^*(\Tang M_1) \oplus \pr_2^*(\Tang M_2)\) consisting of those
vectors \((m_1,\xi, m_2, \eta) \in \Tang M_1\oplus \Tang M_2\) (where
\(f(m_1) = b(m_2)\)) with \(D_{m_1}f(\xi)= D_{m_2} b(\eta)\).  We may
denote this bundle briefly by \(\Tang M_1\oplus_{\Tang Y} \Tang M_2\).

Furthermore, from~\eqref{es1},
\begin{equation}
  \label{es200}
  \Tang (M_1\times_Y M_2) -
  \pr_2^*(\Tang M_2) = \pr_1^*(\Tang M_1 - f^*(\Tang Y))
\end{equation}
as stable \(G\)\nb-vector bundles.  Thus a stable
\(\K_G\)\nb-orientation for \(\Tang M_1 - f^*(\Tang Y)\) may be pulled
back to one for \(\Tang (M_1\times_Y M_2) - \pr_2^*(\Tang M_2)\).
More succinctly, a \(\K_G\)\nb-orientation for the map~\(f\) induces
one for~\(\pr_2\).

Now consider two composable smooth correspondences
\begin{equation}
  \label{csm}
  \begin{tikzpicture}[baseline=(current bounding box.west),column sep=3em]
    \matrix (m) [cd]{
      & M_1 && M_2\\
      X&&Y&&Z,\\};
    \begin{scope}[corr]
      \draw (m-1-2)-- node[swap] {$b_1$} (m-2-1);
      \draw (m-1-2)-- node {$f_1$} (m-2-3);
      \draw (m-1-4)-- node[swap] {$b_2$} (m-2-3);
      \draw (m-1-4)-- node {$f_2$} (m-2-5);
    \end{scope}
  \end{tikzpicture}
\end{equation}
with \(\K\)\nb-theory classes \(\xi_1\in \RK^G_{*,X}(M_1)\) and
\(\xi_2\in \RK^G_{*,Y}(M_2)\).  We assume that the pair of smooth maps
\((f_1,b_2)\) is transverse.  Then there is an essentially unique
commuting diagram
\begin{equation}
  \label{eq:intersection_diagram}
  \begin{tikzpicture}[baseline=(current bounding box.west),column sep=3em]
    \matrix (m) [cd]{
      &&M_1\times_Y M_2\\
      & M_1 && M_2\\
      X&&Y&&Z,\\};
    \begin{scope}[corr]
      \draw (m-1-3)-- node[swap] {$\pr_1$} (m-2-2);
      \draw (m-1-3)-- node {$\pr_2$}       (m-2-4);
      \draw (m-2-2)-- node[swap] {$b_1$} (m-3-1);
      \draw (m-2-2)-- node {$f_1$}       (m-3-3);
      \draw (m-2-4)-- node[swap] {$b_2$} (m-3-3);
      \draw (m-2-4)-- node {$f_2$}       (m-3-5);
    \end{scope}
  \end{tikzpicture}
\end{equation}
where the square is Cartesian.  We briefly call such a diagram an
\emph{intersection diagram} for the two given correspondences.

By the discussion above, the map~\(\pr_2\) inherits a
\(\K_G\)\nb-orientation from~\(f_1\), so
that the map \(f\defeq f_2\circ \pr_2\) is also
\(\K_G\)\nb-oriented.  Let \(M\defeq
M_1\times_Y M_2\) and \(b\defeq
b_1\circ\pr_1\).  The product \(\xi\defeq \pr_1^*(\xi_1)\otimes
\pr_2^*(\xi_2)\) belongs to~\(\RK^G_{*,X}(M)\), that is, it has
\(X\)\nb-compact support with respect to the map \(b\colon M\to
X\).  Thus we get a \(G\)\nb-equivariant correspondence
\((M,b,f,\xi)\) from~\(X\) to~\(Y\).  The assertion of
\cite{Emerson-Meyer:Correspondences}*{Corollary 2.39} --
following~\cite{Connes-Skandalis:Longitudinal} -- is that this
represents the composition of the two given correspondences.
It is called their \emph{intersection product}.

\begin{example}
  \label{exa:compose_diagonal}
  Consider the diagonal embedding \(\delta\colon X\to X\times X\) and
  the graph embedding \(\bar{f}\colon X\to X\times X\), \(x\mapsto (x,
  f(x))\), for a smooth map \(f\colon X\to X\).  These two maps are
  transverse if and only if~\(f\) has simple fixed points.  If this is
  the case, then the intersection space is the set of fixed points
  of~\(f\).  If, say, \(f=\id_X\), then \(\delta\) and~\(\bar{f}\) are
  not transverse.
\end{example}

To define the composition also in the non-transverse case, a Thom
modification is used in~\cite{Emerson-Meyer:Correspondences} to
achieve transversality (see
\cite{Emerson-Meyer:Correspondences}*{Theorem 2.32}).  Take two
composable (smooth) correspondences as in~\eqref{csm}, and let \(f_1=
(V_1, E_1, \hat{f}_1)\) as a normally non-singular map.  By a Thom
modification, the geometric correspondence \(X\xleftarrow{b_1}
(M_1,\xi) \xrightarrow{f_1} Y\) is equivalent to
\begin{equation}
  \label{c1}
  X\xleftarrow{b_1\circ \pi_{V_1}} (V_1, \tau_{V_1}\otimes\pi_{V_1}^*\xi)
  \xrightarrow{\pi_{E_1} \circ \hat{f}_1}Y,
\end{equation}
where \(\pi_{V_1}\colon V_1\to M_1\) and \(\pi_{E_1}\colon Y\times
E\to Y\) are the bundle projections, and \(\tau_{V_1}\in
\RK^*_{G,M_1}(V_1)\) is the Thom class of~\(V_1\).  We
write~\(\otimes\) for the multiplication of \(\K\)\nb-theory classes.
The support of such a product is the intersection of the supports of
the factors.  Hence the support of \(\tau_{V_1}\otimes\pi_{V_1}^*\xi\)
is an \(X\)\nb-compact subset of~\(V_1\).

The forward map \(V_1\to Y\) in~\eqref{c1} is a special
submersion and, in particular, a submersion.  As such it is
transverse to any other map \(b_2\colon M_2 \to Y\).  Hence
after the Thom modification we may compute the composition of
correspondences as an intersection product of the
correspondence~\eqref{c1} with the correspondence \(Y
\xleftarrow{b_2}M_2 \xrightarrow{f_2} Y\).  This yields
\begin{equation}
  \label{c2}
  X\xleftarrow{b_1\circ\pi_{V_1} \circ \pr_1}
  \bigl(V_1\times_Y M_2,
  \pr_{V_1}^*(\tau_{V_1}\otimes\pi_{V_1}^*(\xi))\bigr)
  \xrightarrow{f_2\circ\pr_2} Z,
\end{equation}
where
\[
V_1\times_Y M_2\defeq \{ (x,v, m_2)\in V_1\times M_2 \mid
(\pi_{E_1}\circ \hat{f}_1)(x,v) = b_2(m_2)\}
\]
and \(\pr_1\colon V_1\times_Y M_2\to V_1\) and \(\pr_2\colon
V_1\times_Y M_2\to M_2\) are the coordinate projections.  The
intersection space \(V_1\times_Y M_2\) is a smooth manifold with
tangent bundle
\[
\Tang V_1\oplus_{\Tang Y} \Tang M_2\defeq \pr_1^*(\Tang V_1)
\oplus_{(\pi_{E_1}\circ \hat{f}_1)^*(\Tang Y)} \pr_2^*(\Tang M_2),
\]
and the map~\(\pr_2\) is a submersion with fibres tangent to~\(E_1\).
Thus it is \(\K_G\)\nb-oriented.

This recipe to define the composition product for all geometric
correspondences is introduced in~\cite{Emerson-Meyer:Correspondences}.
It is shown there that it is equivalent to the intersection product if
\(f_1\) and~\(b_2\) are transverse.  But the space \(V_1\times_Y M_2\)
has high dimension, making it inefficient to compute with this
formula.  And we are usually given only the underlying map \(f_1\colon
M_1\to Y\), not its factorisation as a normally non-singular map --
and the latter is difficult to compute.  We will weaken the
transversality requirement in Section~\ref{subsec:ptrans}.  The more
general condition still applies, say, if \(f_1=b_2\).  This is
particularly useful for computing Euler characteristics.

\subsection{Duality and the Lefschetz index}
\label{sec:Lefind}

Duality plays a crucial role
in~\cite{Emerson-Meyer:Correspondences} in order to compare
the geometric and analytic models of equivariant Kasparov theory.
Duality is also used in \cite{Emerson-Meyer:Dualities}*{Definition
  4.26} to construct a Lefschetz map
\begin{equation}
  \label{eq:Lefschetz_map}
  \Lef\colon \KK^G_*\bigl(\Cont(X), \Cont(X)\bigr)
  \to \KK^G_*(\Cont(X), \C),
\end{equation}
for a compact smooth \(G\)\nb-manifold~\(X\).  We may compose~\(\Lef\)
with the index map \(\KK^G_*(\Cont(X), \C) \to \KK^G_*(\C, \C) \cong
\Rep(G)\) to get a Lefschetz index \(\lefind(f) \in \Rep(G)\) for any
\(f\in \KK_*^G\bigl(\Cont(X), \Cont(X)\bigr)\).  This is the invariant
we will be studying in this paper.

This Lefschetz map~\(\Lef\) is a special case of a very general
construction.  Let~\(\Ccat\) be a symmetric monoidal category.
Let~\(A\) be a dualisable object of~\(\Ccat\) with a dual~\(A^*\).
Let \(\eta\colon \Unit\to A\otimes A^*\) and \(\varepsilon\colon
A^*\otimes A\to \Unit\) be the unit and counit of the duality.  Being
unit and counit of a duality means that they satisfy the zigzag
equations: the composition
\begin{equation}
  \label{zigzag}
  A\xrightarrow{\eta \otimes \id_A} A\otimes A^*\otimes A
  \xrightarrow{\id_A\otimes\varepsilon} A
\end{equation}
is equal to the identity \(\id_A\colon A \to A\), and similarly for
the composition
\begin{equation}
  \label{zigzag2}
  A^*\xrightarrow{\id_{A^*}\otimes\eta} A^*\otimes A\otimes A^*
  \xrightarrow{\varepsilon\otimes\id_{A^*}} A^*.
\end{equation}
If~\(\Ccat\) is \(\Z\)\nb-graded, then we may allow dualities to shift
degrees.  Then some signs are necessary in the zigzag equations, see
\cite{Emerson-Meyer:Dualities}*{Theorem 5.5}.

Given a multiplication map \(m\colon A\otimes A\to A\), we define the
\emph{Lefschetz map}
\[
\Lef\colon \Ccat(A,A)\to\Ccat(A,\Unit)
\]
by sending an endomorphism \(f\colon A\to A\) to the composite
morphism
\[
A \cong A \otimes\Unit \xrightarrow{\id_A\otimes\eta}
A\otimes A\otimes A^* \xrightarrow{m\otimes\id_{A^*}}
A\otimes A^* \xrightarrow{f\otimes\id_{A^*}}
A\otimes A^* \xrightarrow{\braid}
A^* \otimes A \xrightarrow{\varepsilon} \Unit.
\]
This depends only on \(m\) and~\(f\), not on the choices of the dual,
unit and counit.  For \(f=\id_A\) we get the \emph{higher Euler
  characteristic} of~\(A\) in \(\Ccat(A,\Unit)\).

While the geometric computations below give the Lefschetz map as
defined above, the global homological computations in Sections
\ref{sec:trace_localisation} and~\ref{sec:Hattori-Stallings} only
apply to the following coarser invariant:

\begin{definition}
  \label{maindef:lefindex}
  The \emph{Lefschetz index}~\(\lefind(f)\) (or \emph{trace}
  \(\tr(f)\)) of an endomorphism \(f\colon A \to A\) is the composite
  \begin{equation}
    \label{eq:lefindex}
    \Unit \xrightarrow{\eta} A\otimes A^* \xrightarrow{\braid}
    A^*\otimes A \xrightarrow{\id_{A^*}\otimes f}
    A^*\otimes A \xrightarrow{\varepsilon} \Unit,
  \end{equation}
  where \(\braid\) denotes the braiding.  The Lefschetz index
  of~\(\id_A\) is called the \emph{Euler characteristic} of~\(A\).
\end{definition}

If~\(A\) is a unital algebra object in~\(\Ccat\) with multiplication
\(m\colon A\otimes A\to A\) and unit \(u\colon \Unit\to A\), then
\(\lefind(f) = \Lef(f)\circ u\).  In particular, the Euler
characteristic is the composite of the higher Euler characteristic
with~\(u\).

In this section, we work in \(\Ccat=\GKK^G\) for a compact group~\(G\)
with \(\Unit=\pt\) and \(\otimes=\times\).  In
Section~\ref{sec:trace_localisation}, we work in the related analytic
category \(\Ccat=\KK^G\) with \(\Unit=\C\) and the usual tensor
product.

We will show below that any compact smooth \(G\)\nb-manifold~\(X\) is
dualisable in \(\GKK^G\).  The multiplication \(m\colon X\times X\to
X\) and unit \(u\colon \pt\to X\) are given by the geometric
correspondences
\[
X\times X \xleftarrow{\Delta} X \xrightarrow[=]{\id_X} X,\qquad
\pt \leftarrow X \xrightarrow[=]{\id_X} X
\]
with \(\Delta(x)=(x,x)\); these induce the multiplication
\(^*\)\nb-homomorphism \(m\colon \Cont(X\times X)\cong
\Cont(X)\otimes\Cont(X)\to\Cont(X)\) and the embedding \(\C\to
\Cont(X)\) of constant functions.  Composing with~\(u\) corresponds to
taking the \emph{index} of a \(\K\)\nb-homology class.

\begin{remark}
  In \cites{Emerson-Meyer:Euler, Emerson-Meyer:Dualities,
    Emerson-Meyer:Equi_Lefschetz} Lefschetz maps are also studied for
  non-compact spaces~\(X\), equipped with group actions of possibly
  non-compact groups.  A non-compact \(G\)\nb-manifold~\(X\) is
  usually not dualisable in~\(\GKK^G\), and even if it were, the
  Lefschetz map that we would get from this duality would not be the
  one studied in \cites{Emerson-Meyer:Euler, Emerson-Meyer:Dualities,
    Emerson-Meyer:Equi_Lefschetz}.
\end{remark}

\subsection{Duality for smooth compact manifolds}
\label{sec:duality}

We are going to show that compact smooth \(G\)\nb-manifolds are
dualisable in the equivariant correspondence theory~\(\GKK^G\).  This
was already proved in~\cite{Emerson-Meyer:Correspondences}, but since
we need to know the unit and counit to compute Lefschetz indices, we
recall the proof in detail.  It is of some interest to treat duality
for smooth manifolds with boundary because any finite CW-complex is
homotopy equivalent to a manifold with boundary.

In case~\(X\) has a boundary~\(\partial X\), let \(\intr X
\defeq X\setminus \partial X\) denote its interior and let
\(\iota\colon \intr X\to X\) denote
the inclusion map.  The boundary \(\partial X\subseteq X\)
admits a \(G\)\nb-equivariant collar, that is, the embedding
\(\partial X\to X\) extends to a \(G\)\nb-equivariant
diffeomorphism from \(\partial X\times [0,1)\) onto an open
neighbourhood of~\(\partial X\) in~\(X\) (see
also~\cite{Emerson-Meyer:Dualities}*{Lemma 7.6} for this
standard result).  This collar neighbourhood together with a
smooth map \([0,1)\to (0,1)\) that is the identity near~\(1\)
provides a smooth \(G\)\nb-equivariant map \(\rho\colon X\to
\intr X\) that is inverse to~\(\iota\) up to smooth
\(G\)\nb-homotopy.  Furthermore, we may assume that~\(\rho\) is
a diffeomorphism onto its image.

If~\(X\) has no boundary, then \(\intr X=X\), \(\iota=\id\),
and \(\rho=\id\).

The results about smooth normally non-singular maps
in~\cite{Emerson-Meyer:Normal_maps} extend to smooth manifolds
with boundary if we add suitable assumptions about the
behaviour near the boundary.  We mention one result of this
type and a counterexample.

\begin{proposition}
  \label{pro:non-singular_smooth_with_boundary}
  Let \(X\) and~\(Y\) be smooth \(G\)\nb-manifolds with~\(X\)
  of finite orbit type and let \(f\colon X\to Y\) be a smooth
  map with \(f(\partial X)\subseteq \partial Y\) and~\(f\)
  transverse to~\(\partial Y\).  Then~\(f\) lifts to a
  normally non-singular map, and any two such normally
  non-singular liftings of~\(f\) are equivalent.
\end{proposition}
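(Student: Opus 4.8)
The plan is to reduce to the case without boundary, which is \cite{Emerson-Meyer:Normal_maps}*{Theorem 3.22}, by passing to the doubles \(DX=X\cup_{\partial X}X\) and \(DY=Y\cup_{\partial Y}Y\). These are smooth \(G\)\nb-manifolds without boundary, and since every orbit type occurring in~\(DX\) already occurs in~\(X\), the manifold~\(DX\) is again of finite orbit type. Note also that the hypotheses on~\(f\) force \(f^{-1}(\partial Y)=\partial X\) and \(f(\intr X)\subseteq\intr Y\): if an interior point of~\(X\) were mapped into~\(\partial Y\), then the normal coordinate of~\(\partial Y\) composed with~\(f\) would attain a local minimum there, so that its differential would vanish and~\(f\) could not be transverse to~\(\partial Y\).

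First I would extend~\(f\) to a smooth \(G\)\nb-map \(Df\colon DX\to DY\) restricting to~\(f\) on \(X=X_+\subseteq DX\). Fix \(G\)\nb-equivariant collars \(\partial X\times[0,1)\hookrightarrow X\) and \(\partial Y\times[0,1)\hookrightarrow Y\). Since \(f(\partial X)\subseteq\partial Y\) and \(f\) is transverse to~\(\partial Y\), the normal component of \(Df\) along~\(\partial X\) is everywhere non-zero and inward-pointing; reparametrising the collar of~\(X\) by this fibrewise-invertible normal component, I may assume that \(f(z,t)=(a(z,t),t)\) on a collar neighbourhood, with \(a(z,t)\in\partial Y\). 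Any smooth \(G\)\nb-equivariant extension of~\(a\) over \(\partial X\times(-1,1)\) glues the two copies of~\(f\) to a smooth \(G\)\nb-map on a neighbourhood of~\(X\) in~\(DX\), which I extend arbitrarily and \(G\)\nb-equivariantly over the rest of~\(DX\). By \cite{Emerson-Meyer:Normal_maps}*{Theorem 3.22}, \(Df\) then lifts to a smooth normally non-singular map \((V,E,\hat f)\colon DX\to DY\), unique up to equivalence.

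The core of the argument is to restrict \((V,E,\hat f)\) to a normally non-singular map from~\(X\) to~\(Y\). The naive restriction \((V|_X,E,\hat f|_{V|_X})\) need not take values in \(Y\times E\subseteq DY\times E\), since the image of the tube \(V|_{\partial X}\) under~\(\hat f\) may straddle the seam \(\partial Y\times E\) in \(DY\times E\). To remedy this I would first make \((V,E,\hat f)\) a product near the seam. The restriction of~\(Df\) to a seam collar \(\partial X\times(-1,1)\) is \(G\)\nb-homotopic to the product of \(f|_{\partial X}\colon\partial X\to\partial Y\) with \(\id_{(-1,1)}\); the map \(f|_{\partial X}\) lifts to a smooth normally non-singular map by \cite{Emerson-Meyer:Normal_maps}*{Theorem 3.22} (both boundaries are closed and of finite orbit type); and the product of such a lift with \(\id_{(-1,1)}\) is a normally non-singular lift of~\(Df\) over the seam collar. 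By the uniqueness clause of \cite{Emerson-Meyer:Normal_maps}*{Theorem 3.22}, the restriction of \((V,E,\hat f)\) to the seam collar is equivalent to this product lift, and I would use such an equivalence to replace \((V,E,\hat f)\) by an equivalent normally non-singular map that is a genuine product near the seam, hence collar-compatible. Then \(\hat f(V|_X)\subseteq Y\times E\), and \((V|_X,E,\hat f|_{V|_X})\) is a normally non-singular map from~\(X\) to~\(Y\) with underlying map~\(f\). For uniqueness: given two collar-compatible normally non-singular liftings of~\(f\), I would extend both to normally non-singular liftings of one fixed~\(Df\) on~\(DX\) (using collar-compatibility and the homotopy above to attach a single fixed lift over~\(X_-\)), apply the uniqueness in \cite{Emerson-Meyer:Normal_maps}*{Theorem 3.22}, and verify by the same seam argument that the resulting equivalence may be taken collar-compatible, so that it restricts to an equivalence over~\(X\).

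I expect the collar-compatibility step — turning the normally non-singular structure on the double into a genuine product near the seam, using only the homotopy-uniqueness of \cite{Emerson-Meyer:Normal_maps}*{Theorem 3.22} — to be the main obstacle; it amounts to a relative version of the boundaryless theorem and requires some care with equivariant tubular neighbourhoods and collars. An alternative route that avoids the doubles is to embed~\(X\) \(G\)\nb-equivariantly into a linear \(G\)\nb-representation~\(W\) via Mostow's theorem, to note that then \((f,j)\colon X\to Y\times W\), \(x\mapsto(f(x),j(x))\), is a neat \(G\)\nb-embedding of manifolds with boundary — neatness being precisely the content of \(f(\partial X)\subseteq\partial Y\) together with transversality of~\(f\) to~\(\partial Y\) — and to take a collar-compatible \(G\)\nb-invariant tubular neighbourhood, which directly furnishes \((V,E,\hat f)\) with \(E=W\); uniqueness then follows from the equivariant isotopy-uniqueness of neat embeddings into large representations and of their tubular neighbourhoods. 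Along that route the main obstacle becomes the equivariant neat tubular-neighbourhood theorem.
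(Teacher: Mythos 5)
Your ``alternative route'' at the end is precisely the paper's proof: since \(X\) has finite orbit type, Mostow's theorem embeds it in a linear \(G\)\nb-representation~\(E\), and the hypotheses \(f(\partial X)\subseteq\partial Y\) plus transversality to~\(\partial Y\) make \((f,j)\colon X\to Y\times E\) a smooth (neat) embedding of \(G\)\nb-manifolds with boundary in the sense of \cite{Emerson-Meyer:Normal_maps}*{Definition 3.17}. What you call the ``main obstacle'' on this route --- the equivariant neat tubular-neighbourhood theorem --- is exactly what the paper cites: \cite{Emerson-Meyer:Normal_maps}*{Theorem 3.18} is already stated for such embeddings of manifolds with boundary, so the tubular neighbourhood immediately yields the normally non-singular lift with \(E\) as the linear representation; uniqueness is then proved as in \cite{Emerson-Meyer:Normal_maps}*{Theorem 4.36}. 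So that sketch is correct and needs no new input beyond the citations.

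Your primary route via the doubles \(DX\), \(DY\), by contrast, contains a genuine gap exactly where you flag it. The uniqueness clause of \cite{Emerson-Meyer:Normal_maps}*{Theorem 3.22} only tells you that the restriction of \((V,E,\hat f)\) to the seam collar is \emph{equivalent} to the product lift; equivalence of normally non-singular maps involves stabilisation by further vector bundles and isotopy of the open embeddings, and such an equivalence over the open collar need not be compatible with the given structure away from the seam. Gluing the modified structure near the seam to the original one elsewhere, and likewise arranging the equivalence in the uniqueness step to be collar-compatible, is a relative statement that is not available from the boundaryless theorem; making it precise essentially amounts to proving a neat tubular-neighbourhood theorem, i.e.\ to redoing the paper's argument. (Example~\ref{exa:smooth_with_boundary_not_nns} shows the boundary behaviour cannot be finessed.) So as written the doubling argument is incomplete, and the efficient repair is to promote your alternative route to the actual proof.
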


\begin{proof}
  Since~\(X\) has finite orbit type, we may smoothly
  embed~\(X\) into a finite-dimensional linear
  \(G\)\nb-representation~\(E\).  Our assumptions ensure that
  the resulting map \(X\to Y\times E\) is a smooth embedding
  between \(G\)\nb-manifolds with boundary in the sense of
  \cite{Emerson-Meyer:Normal_maps}*{Definition 3.17} and hence
  has a tubular neighbourhood by
  \cite{Emerson-Meyer:Normal_maps}*{Theorem 3.18}.  This
  provides a normally non-singular map \(X\to Y\)
  lifting~\(f\).  The uniqueness up to equivalence is proved as
  in the proof of \cite{Emerson-Meyer:Normal_maps}*{Theorem
    4.36}.
\end{proof}

\begin{example}
  \label{exa:smooth_with_boundary_not_nns}
  The inclusion map \(\{0\}\to [0,1)\) is a smooth map between
  manifolds with boundary, but it does not lift to a smooth
  normally non-singular map.
\end{example}

Let~\(X\) be a smooth compact \(G\)\nb-manifold.  Since~\(X\) has
finite orbit type, it embeds into some linear
\(G\)\nb-representation~\(E\).  We may choose this
\(G\)\nb-representation to be \(\K_G\)\nb-oriented and even-dimensional
by a further stabilisation.  Let \(\Nor X\epi X\) be the \emph{normal
  bundle} for such an embedding \(X\to E\).  Thus \(\Tang X\oplus \Nor
X \cong X\times E\) is \(G\)\nb-equivariantly isomorphic to a
\(\K_G\)\nb-oriented trivial \(G\)\nb-vector bundle.

\begin{theorem}
  \label{the:compact_smooth_dualisable}
  Let~\(X\) be a smooth compact \(G\)\nb-manifold, possibly
  with boundary.  Then~\(X\) is dualisable in \(\GKK^G_*\) with
  dual~\(\Nor \intr X\), and the unit and counit for the
  duality are the geometric correspondences
  \[
  \pt \leftarrow
  X \xrightarrow{(\id,\zeta\rho)}
  X\times \Nor \intr X,\qquad
  \Nor \intr X\times X \xleftarrow{(\id,\iota\pi)}
  \Nor\intr X \rightarrow \pt,
  \]
  where \(\zeta\colon \intr X\to\Nor\intr X\) is the zero
  section, \(\rho\colon X\to\intr X\) is some
  \(G\)\nb-equivariant collar retraction, \(\pi\colon \Nor\intr
  X\to\intr X\) is the bundle projection, and \(\iota\colon
  \intr X\to X\) the identical inclusion.  The \(\K\)\nb-theory
  classes on the space in the middle are the trivial rank-one
  vector bundles for both correspondences.
\end{theorem}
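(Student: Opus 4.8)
The plan is to verify the two zigzag equations \eqref{zigzag} and \eqref{zigzag2} for the proposed unit and counit, working entirely with the geometric description of composition via intersection products from Section~\ref{subsec:trans}. First I would reduce to the boundaryless case: the retraction $\rho\colon X\to\intr X$ is a $G$-homotopy equivalence with homotopy inverse $\iota$, and since bordism in $\GKK^G$ includes homotopies of the maps $b$ and~$f$, we may replace $X$ by~$\intr X$ throughout and assume $\partial X=\emptyset$, so that $\rho=\iota=\id$ and the dual is simply $\Nor X$, the normal bundle of a chosen $\K_G$-oriented embedding $X\mono E$. In that case the unit is $\pt\leftarrow X\xrightarrow{(\id,\zeta)} X\times\Nor X$ and the counit is $\Nor X\times X\xleftarrow{(\id,\pi)} \Nor X\to\pt$, with trivial rank-one bundles in the middle.

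Next I would compute the composite \eqref{zigzag}. This is a composition of three correspondences: the unit $\eta$, the identity on the middle factor tensored appropriately, and the counit $\varepsilon$. The forward maps to be composed are $\zeta\colon X\to\Nor X$ (from $\eta$, after tensoring with $\id_X$) and the second-coordinate projection $\Nor X\times X\to\Nor X$ built from $\varepsilon$; I need a transverse (or special) representative before forming the intersection product. The key geometric fact is that $\Tang X\oplus\Nor X\cong X\times E$ is trivial and $\K_G$-oriented, so $\Nor X$ carries a Thom class $\tau_{\Nor X}\in\RK^*_{G,X}(\Nor X)$ whose restriction to the zero section is the $\K_G$-theoretic Euler class. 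The intersection space for the relevant pullback is (a neighbourhood of) the diagonal copy of~$X$ sitting as the zero section inside $\Nor X$, and the excess normal data is exactly $\Nor X$ itself; the Thom-class bookkeeping then shows that the middle $\K$-theory class one obtains is $\tau_{\Nor X}$ restricted back to~$X$, which, since $\zeta$ is transverse to the zero section up to the excess bundle $\Nor X$ and $\Tang X\oplus\Nor X$ is trivially oriented, collapses to the class~$1\in\K^0_G(X)$. Hence the composite correspondence is $X\xleftarrow{\id} X\xrightarrow{\id} X$ with $\xi=1$, i.e.\ $\id_X$. The second zigzag \eqref{zigzag2}, for the dual $\Nor X$, is checked symmetrically; here the roles of zero section and bundle projection are interchanged, and one uses that $\pi\circ\zeta=\id_{\intr X}$ and again that the excess bundle contributes the Euler class of a trivially $\K_G$-oriented bundle, namely~$1$.

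Finally I would confirm that the degree (the $\Z/2$-grading shift) works out: the dual $\Nor X$ has $\GKK^G_*(\pt,X\times\Nor X)$ sitting in the degree prescribed by $\dim\Nor X$, and because $E$ was chosen even-dimensional and $\K_G$-oriented, the shifts introduced by~$\eta$ and~$\varepsilon$ cancel, so no signs intervene in the zigzag identities beyond those already recorded in \cite{Emerson-Meyer:Dualities}*{Theorem 5.5}. The main obstacle is the careful tracking of the $\K$-theory class and the $\K_G$-orientation through the Thom modification and intersection product in the computation of~\eqref{zigzag}: one must check that the Thom class of $\Nor X$ pulled back along the diagonal inclusion and then restricted to the zero section really does yield the multiplicative unit $1\in\RK^0_G(X)$, rather than some nontrivial Euler class, and this is precisely where the triviality and chosen orientation of $\Tang X\oplus\Nor X\cong X\times E$ are essential. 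Once that identification is in hand, both zigzag equations reduce to the statement that the resulting correspondence is bordant to the identity correspondence on~$X$, completing the proof.
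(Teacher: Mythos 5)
Your overall strategy---verifying the two zigzag equations \eqref{zigzag} and \eqref{zigzag2} by composing the unit and counit as geometric correspondences---is the same as the paper's, but two steps of your execution do not work. First, the reduction to the boundaryless case is invalid. Replacing \(X\) by \(\intr X\) uses the isomorphism \(X\cong\intr X\) in \(\GKK^G\), but \(\intr X\) is non-compact whenever \(\partial X\neq\emptyset\), so you are not in the ``compact without boundary'' situation that you then assume; a compact manifold with boundary need not be \(G\)-homotopy equivalent to any closed manifold. Concretely, the would-be unit \(\pt\leftarrow \intr X\to \intr X\times\Nor\intr X\) with class \(1\) is not even a correspondence, because \(1\) fails to have \(\pt\)-compact support on the non-compact middle space. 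This is exactly why the statement (and the paper's proof) keeps the compact \(X\) as the middle space of the unit and threads the collar retraction \(\rho\) and the inclusion \(\iota\) through the entire argument, using at the end that \(\iota\rho\) is properly homotopic to \(\id_X\).

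Second, the central computation is misidentified. In the zigzag composites the relevant pairs of maps are transverse: the coincidence space of \((\id,\zeta\rho)\times\id\) and \(\id\times(\id,\iota\pi)\) is a copy of \(X\) (and of \(\Nor\intr X\) in the second zigzag), so no excess intersection bundle and no Thom or Euler class enters; the \(\K\)-theory class stays equal to \(1\) automatically, and the actual work is the Cartesian-square (transversality) check, the \(\K_G\)-orientation bookkeeping (where the even dimension of \(E\) is used to avoid a sign in the second zigzag), and the proper homotopy \(\iota\rho\simeq\id_X\). Your version instead introduces an excess bundle (\(\Nor X\)) and claims that the restriction of its Thom class to the zero section ``collapses to \(1\)'' because \(\Tang X\oplus\Nor X\cong X\times E\) is trivially oriented. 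That identity is false: the restriction of a Thom class to the zero section is the \(\K\)-theoretic Euler class (Definition~\ref{def:Euler_defect}), which is \(0\) for any bundle admitting a nowhere-vanishing section and is nontrivial in general, never the multiplicative unit for a positive-rank bundle. If an Euler class genuinely appeared in the zigzag composites, the duality would fail; the proof succeeds precisely because these particular intersections are transverse, which your argument neither establishes nor uses.
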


\begin{proof}
  First we must check that the purported unit and counit above are
  indeed geometric correspondences; this contains describing the
  \(\K_G\)\nb-orientations on the forward maps, which is part of the
  data of the geometric correspondences.

  The maps \(X\to\pt\) and \(\Nor\intr X\to \Nor\intr X\times X\)
  above are proper.  Hence there is no support restriction for the
  \(\K\)\nb-theory class on the middle space, and the trivial rank-one
  vector bundle is allowed.

  By the Tubular Neighbourhood Theorem, the normal bundle~\(\Nor \intr
  X\) of the embedding \(\intr X\to E\) is diffeomorphic to an open
  subset of~\(E\).  This gives a canonical isomorphism between the
  tangent bundle of~\(\Nor \intr X\) and~\(E\).  We choose this
  isomorphism and the given \(\K_G\)\nb-orientation on the linear
  \(G\)\nb-representation~\(E\) to \(\K_G\)\nb-orient \(\Nor\intr X\)
  and thus the projection \(\Nor\intr X\to\pt\).  With this
  \(\K_G\)\nb-orientation, the counit \(\Nor \intr X\times X
  \xleftarrow{(\id,\iota\pi)} \Nor\intr X \rightarrow \pt\) is a
  \(G\)\nb-equivariant geometric correspondence -- even a special one
  in the sense of~\cite{Emerson-Meyer:Correspondences}.

  We identify the tangent bundle of \(X\times \Nor\intr X\) with
  \(\Tang X\times \Tang\intr X\oplus \Nor\intr X\) in the obvious way.
  The normal bundle of the embedding \((\id,\zeta\rho)\colon X\to
  X\times \Nor\intr X\) is isomorphic to the quotient of \(\Tang
  X\oplus\rho^*(\Tang \intr X)\oplus \rho^*(\Nor\intr X)\) by the
  relation \((\xi,D\rho(\xi),0)\sim0\) for \(\xi\in\Tang X\).  We
  identify this with \(\Tang X\oplus \Nor X\cong X\times E\) by
  \((\xi_1,\xi_2,\eta)\mapsto
  (D\rho^{-1}(\xi_2)-\xi_1,D\rho^{-1}(\eta))\) for \(\xi_1\in\Tang_x
  X\), \(\xi_2\in\Tang_{\rho(x)} X\), \(\eta\in\rho^*(\Nor\intr X)_x =
  \Nor_{\rho(x)}X\).  With this \(\K_G\)\nb-orientation on
  \((\id,\zeta\rho)\), the unit above is a \(G\)\nb-equivariant
  geometric correspondence.  A boundary of~\(X\), if present, causes
  no problems here.  The same goes for the computations below:
  although the results in~\cite{Emerson-Meyer:Correspondences} are
  formulated for smooth manifolds without boundary, they continue to
  hold in the cases we need.

  We establish the duality isomorphism by checking the zigzag
  equations as in \cite{Emerson-Meyer:Dualities}*{Theorem 5.5}.  This
  amounts to composing geometric correspondences.  In the case at
  hand, the correspondences we want to compose are transverse, so that
  they may be composed by intersections as in
  Section~\ref{subsec:trans}.  Actually, we are dealing with manifolds
  with boundary, but the argument goes through nevertheless.  We write
  down the diagrams together with the relevant Cartesian square.

  The intersection diagram for the first zigzag equation is
  \begin{equation}
    \label{eq:duality_first_zigzag}
    \begin{tikzpicture}[baseline=(current bounding box.west),column sep=3em]
      \matrix (m) [cd]{
        && X\\
        & X\times X && X\times \Nor\intr X\\
        X&&X\times \Nor \intr X\times X&&X.\\};
      \begin{scope}[corr]
        \draw (m-1-3)-- node[swap] {$(\id,\iota\rho)$} (m-2-2);
        \draw (m-1-3)-- node       {$(\id,\zeta\rho)$} (m-2-4);
        \draw (m-2-2)-- node[swap] {$\pr_2$} (m-3-1);
        \draw (m-2-2)-- node       {$(\id,\zeta\rho)\times\id$} (m-3-3);
        \draw (m-2-4)-- node[swap] {$\id\times(\id,\iota\pi)$} (m-3-3);
        \draw (m-2-4)-- node       {$\pr_1$} (m-3-5);
      \end{scope}
    \end{tikzpicture}
  \end{equation}
  The square is Cartesian because \((x,y,z,(w,\nu))\in X^3\times
  \Nor\intr X\) satisfies \((x,(\rho(x),0),y) = (z,(w,\nu),w)\) if and
  only if \(y=\rho(x)\), \(z=x\), \(w=\rho(x)\), and \(\nu=0\) for
  some \(x\in X\).  The \(\K_G\)\nb-orientation on the map
  \((\id,\zeta\rho)\) described above is chosen such that the
  composite map \(f\defeq
  \pr_1\circ (\id,\zeta\rho) = \id\) carries the standard
  \(\K_G\)\nb-orientation.  The map \(b\defeq \pr_2\circ (\id,\iota\rho)
  = \iota\rho\) is properly homotopic to the identity map.  Hence
  the composition above gives the identity map on~\(X\) as required.

  The intersection diagram for the second zigzag equation is
  \begin{equation}
    \label{eq:duality_second_zigzag}
    \begin{tikzpicture}[baseline=(current bounding box.west)]
      \matrix (m) [cd]{
        && \Nor\intr X\\
        & \Nor \intr X\times X && \Nor\intr X\times \Nor \intr X\\
        \Nor \intr X&&\Nor \intr X\times X\times \Nor \intr X&&\Nor \intr X\\};
      \begin{scope}[corr]
        \draw (m-1-3)-- node[swap] {$(\id,\iota\pi)$} (m-2-2);
        \draw (m-1-3)-- node       {$(\id,\zeta\rho\pi)$} (m-2-4);
        \draw (m-2-2)-- node[swap] {$\pr_1$} (m-3-1);
        \draw (m-2-2)-- node {$\id\times(\id,\zeta\rho)$} (m-3-3);
        \draw (m-2-4)-- node[swap] {$(\id,\iota\pi)\times\id$} (m-3-3);
        \draw (m-2-4)-- node {$\pr_2$} (m-3-5);
      \end{scope}
    \end{tikzpicture}
  \end{equation}
  because \(((x,\nu),y,(w,\mu),(z,\kappa)) \in \Nor \intr X\times
  X\times (\Nor \intr X)^2\) satisfy
  \[
  ((x,\nu),y,(\rho(y),0)) = ((w,\mu),w,(z,\kappa))
  \]
  if and only if \((w,\mu)=(x,\nu)\), \(y=x\), \(z=\rho(x)\),
  \(\kappa=0\) for some \((x,\nu)\in\Nor\intr X\).

  The map \((\id,\zeta\rho\pi)\) is smoothly homotopic to the diagonal
  embedding \(\delta\colon \Nor\intr X\to\Nor\intr X\times \Nor\intr
  X\).  Replacing \((\id,\zeta\rho\pi)\) by~\(\delta\) gives an
  equivalent geometric correspondence.  The \(\K_G\)\nb-orientation on
  the normal bundle of \((\id,\zeta\rho\pi)\) that comes with the
  composition product is transformed by this homotopy to the
  \(\K_G\)\nb-orientation on the normal bundle of the diagonal embedding
  that we get by identifying the latter with the pull-back of~\(E\) by
  mapping
  \[
  (\xi_1,\eta_1,\xi_2,\eta_2)\in
  \Tang_{(x,\zeta,x,\zeta)}(\Nor\intr X\times\Nor\intr X)
  \cong \Tang_{x}\intr X\oplus\Nor_{x}\intr X\times
  \Tang_{x}\intr X\times\Nor_{x}\intr X \cong E_{x}\times E_{x}
  \]
  to \((\xi_2-\xi_1,\eta_2-\eta_1)\in E_x\).  Since~\(E\) has even
  dimension, changing this to \((\xi_1-\xi_2,\eta_1-\eta_2)\) does not
  change the \(\K_G\)\nb-orientation.  Hence the induced
  \(\K_G\)\nb-orientation on the fibres of~\(D\pr_2\) is the same one
  that we used to \(\K_G\)\nb-orient \(\pr_2\).  The induced
  \(\K_G\)\nb-orientation on \(\pr_2\circ\delta=\id\) is the standard
  one.  Thus the composition in~\eqref{eq:duality_second_zigzag} is
  the identity on~\(\Nor\intr X\).
\end{proof}

\begin{corollary}
  \label{cor:KK_by_duality}
  Let~\(X\) be a compact smooth \(G\)\nb-manifold and let~\(Y\)
  be any locally compact \(G\)\nb-space.  Then every
  element of \(\GKK^G_*(X,Y)\) is represented by a geometric
  correspondence of the form
  \[
  X\xleftarrow{\iota\circ\pi\circ\pr_1} \Nor\intr X \times Y
  \xrightarrow{\pr_2} Y,\qquad
  \xi\in\K^*_G(\Nor\intr X\times Y),
  \]
  and two such correspondences for
  \(\xi_1,\xi_2\in\K^*_G(\Nor\intr X\times Y)\) give the same
  element of \(\GKK^G_*(X,Y)\) if and only if \(\xi_1=\xi_2\).
  Here \(\pr_1\colon \Nor\intr X\times Y\to \Nor\intr X\) and
  \(\pr_2\colon \Nor\intr X\times Y\to Y\) are the coordinate
  projections and \(\iota\circ\pi\colon \Nor\intr X\to \intr
  X\subseteq X\) is as above.
\end{corollary}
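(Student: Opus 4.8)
The plan is to read off the statement directly from the duality established in Theorem~\ref{the:compact_smooth_dualisable} together with the description of the Lefschetz map / duality isomorphism. Recall that, by definition of a dual, we have a natural isomorphism
\[
\GKK^G_*(X,Y) \cong \GKK^G_*(\pt, \Nor\intr X \times Y),
\]
obtained by composing a correspondence $\varphi\colon X\to Y$ with the unit $\pt\to X\times\Nor\intr X$ (tensored with $\id_Y$) and using the zigzag equations; concretely, $\varphi$ corresponds to the composite $\pt\to X\times\Nor\intr X \xrightarrow{\varphi\times\id} Y\times\Nor\intr X$, and conversely a correspondence $\psi\colon \pt\to \Nor\intr X\times Y$ is sent to its composition with the counit $\Nor\intr X\times X\xleftarrow{(\id,\iota\pi)}\Nor\intr X\to\pt$. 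So the first step is to observe that $\GKK^G_*(\pt,\Nor\intr X\times Y)$ is, by the very definition of geometric correspondences, the set of correspondences $\pt\leftarrow (N,\zeta)\xrightarrow{g} \Nor\intr X\times Y$ with $g$ a $\K_G$-oriented normally non-singular map, modulo the equivalence relation; and since the backward map to $\pt$ carries no information, such a correspondence is exactly a $\K_G$-oriented normally non-singular map into $\Nor\intr X\times Y$ decorated by a $\K$-theory class.

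The second step is to normalise these correspondences. Since $\Nor\intr X$ is (via the Tubular Neighbourhood Theorem, as used in the proof of Theorem~\ref{the:compact_smooth_dualisable}) diffeomorphic to an open subset of the linear $G$-representation $E$, the space $\Nor\intr X\times Y$ is itself normally non-singular. Hence, using the Thom modification together with the classification of normally non-singular maps into such a space, any correspondence $\pt\leftarrow(N,\zeta)\to\Nor\intr X\times Y$ is equivalent to one whose forward map is the identity: that is, to a correspondence of the form
\[
\pt \leftarrow \Nor\intr X\times Y \xrightarrow{=} \Nor\intr X\times Y, \qquad \xi\in\K^*_G(\Nor\intr X\times Y),
\]
and two such are equivalent if and only if the $\K$-theory classes agree (this is essentially $\GKK^G_*(\pt,\Nor\intr X\times Y)\cong \K^*_G(\Nor\intr X\times Y)$, the "trivial" computation of $\GKK$ out of a point into a normally non-singular space). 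Now transport this back through the duality: composing the normalised correspondence $\pt\leftarrow\Nor\intr X\times Y\xrightarrow{=}\Nor\intr X\times Y$ with the counit $\Nor\intr X\times X\xleftarrow{(\id,\iota\pi)}\Nor\intr X\to\pt$, tensored appropriately with $Y$, yields precisely the correspondence
\[
X\xleftarrow{\iota\circ\pi\circ\pr_1} \Nor\intr X\times Y \xrightarrow{\pr_2} Y
\]
with the same class $\xi$. The composition is transverse (the counit's forward map $\Nor\intr X\to\pt$ is trivially transverse to everything), so the intersection-product recipe of Section~\ref{subsec:trans} applies and gives exactly this space with $\pr_1,\pr_2$ the coordinate projections and the $\K_G$-orientation inherited from that on $\Nor\intr X$.

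Putting the two bijections together—the duality isomorphism $\GKK^G_*(X,Y)\cong\GKK^G_*(\pt,\Nor\intr X\times Y)$ and the normalisation $\GKK^G_*(\pt,\Nor\intr X\times Y)\cong\K^*_G(\Nor\intr X\times Y)$—gives both the existence of the claimed representative and the injectivity statement. I expect the main obstacle to be bookkeeping rather than conceptual: one must verify that the $\K_G$-orientations match up under the two normalisations (in particular that the composition with the counit does not introduce an orientation twist beyond what is recorded in the statement), and one must be a little careful with the boundary case, since $\intr X$ rather than $X$ appears throughout; but Theorem~\ref{the:compact_smooth_dualisable} has already done the delicate orientation computation, so here it is a matter of tracking it through two further compositions.
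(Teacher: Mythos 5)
Your proposal follows essentially the same route as the paper: it uses the duality of Theorem~\ref{the:compact_smooth_dualisable} to identify \(\GKK^G_*(X,Y)\cong\GKK^G_*(\pt,\Nor\intr X\times Y)\cong\K^*_G(\Nor\intr X\times Y)\) and then computes the image of \(\xi\) explicitly as a transverse composition with the counit, which is exactly the paper's intersection-diagram argument. The only differences are cosmetic: your normalisation of correspondences out of a point is just the canonical isomorphism the paper invokes, and the transversality needed is that of the identity forward map of \(\id_X\times\psi\) against the backward map of the counit (not of the counit's forward map to \(\pt\)), which holds anyway.
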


\begin{proof}
  Duality provides a canonical isomorphism
  \[
  \K^*_G(\Nor\intr X\times Y)
  \cong \GKK^G_*(\pt,\Nor\intr X\times Y)
  \cong \GKK^G_*(X,Y).
  \]
  It maps \(\xi\in\K^*_G(\Nor\intr X\times Y)\) to the composition of
  correspondences described by the following intersection diagram:
  \[
  \begin{tikzpicture}[baseline=(current bounding box.west)]
    \matrix (m) [cd]{
      && \Nor\intr X\times Y\\
      & X\times\Nor\intr X\times Y &&\Nor\intr X\times Y\\
      X&&X\times\Nor\intr X\times Y&&Y,\\};
    \begin{scope}[corr]
      \draw (m-1-3)-- node[swap] {$(\iota\pi,\id)\times\id$} (m-2-2);
      \draw (m-1-3)-- node       {$\id$} (m-2-4);
      \draw (m-2-2)-- node[swap] {$\pr_1$} (m-3-1);
      \draw (m-2-2)-- node {$\id$} (m-3-3);
      \draw (m-2-4)-- node[swap] {$(\iota\pi,\id)\times\id$} (m-3-3);
      \draw (m-2-4)-- node {$\pr_2$} (m-3-5);
    \end{scope}
  \end{tikzpicture}
  \]
  with the \(\K\)\nb-theory class~\(\xi\) on \(\Nor\intr X\times Y\).
  Hence it involves the maps \(\iota\pi\colon \Nor\intr X\times Y\to
  X\) and \(\pr_2\colon \Nor\intr X\times Y \to Y\).
\end{proof}

If~\(X\) is, in addition, \(\K_G\)\nb-oriented, then the Thom
isomorphism provides an isomorphism \(\Nor\intr X\cong\intr X\) in
\(\GKK^G_*\) (which has odd parity if the dimension of~\(X\) is odd).
A variant of Corollary~\ref{cor:KK_by_duality} yields a duality
isomorphism
\[
\K^{*+\dim(X)}_G(\intr X\times Y) \cong \GKK^G_*(X,Y),
\]
which maps \(\xi \in \K^*_G(\intr X\times Y)\) to the geometric
correspondence
\[
X\xleftarrow{\iota\circ\pr_1} \intr X\times Y\xrightarrow{\pr_2}
Y,\qquad
\xi \in \K^*_G(\intr X\times Y).
\]
Hence any element of \(\GKK^G_*(X,Y)\) is represented by a
correspondence of this form.

If~\(X\) is \(\K_G\)\nb-oriented and has no boundary, this becomes
\[
X\xleftarrow{\pr_1} X\times Y\xrightarrow{\pr_2}
Y,\qquad
\xi \in \K^*_G(X\times Y).
\]

These standard forms for correspondences are less useful than
one may hope at first because their intersection products are
no longer in this standard form.

\subsection{More on composition of geometric correspondences}
\label{subsec:ptrans}

With our geometric formulas for the unit and counit of the duality, we
could now compute Lefschetz indices geometrically, assuming the
necessary intersections are transverse.  While this works well, say,
for self-maps with regular non-degenerate fixed points, it fails badly
for the identity correspondence, whose Lefschetz index is the Euler
characteristic.  Building on work of Baum and
Block~\cite{Baum-Block:Excess}, we now describe the composition as a
modified intersection product under a much weaker assumption than
transversality that still covers the computation of Euler
characteristics.

\begin{definition}
  \label{def:smooth_intersection}
  We say that the smooth maps \(f_1\colon M_1\to Y\) and \(b_2\colon
  M_2\to Y\) \emph{intersect smoothly} if
  \[
  M \defeq M_1\times_Y M_2
  \]
  is a smooth submanifold of \(M_1\times M_2\) and any
  \((\xi_1,\xi_2)\in \Tang M_1\times\Tang M_2\) with
  \(Df_1(\xi_1)=Db_2(\xi_2)\in\Tang Y\) is tangent to~\(M\).

  If \(f_1\) and~\(b_2\) intersect smoothly, then we define the
  \emph{excess intersection bundle}~\(\eta(f_1,b_2)\) on~\(M\) as the
  cokernel of the vector bundle map
  \begin{equation}
    \label{testmap2}
    (Df_1,-Db_2)\colon \pr_1^*(\Tang M_1)\oplus \pr_2^*(\Tang M_2)
    \to f^*(\Tang Y),
  \end{equation}
  where \(f\defeq f_1\circ \pr_1 = b_2\circ\pr_2\colon M\to Y\).

  If the maps \(f_1\) and~\(b_2\) are \(G\)\nb-equivariant with
  respect to a compact group~\(G\), then
  the excess intersection bundle is a \(G\)\nb-vector bundle.

  We call the square
  \[
  \begin{tikzpicture}
    \matrix (m) [cd]{
      M &M_2\\M_1&Y\\};
    \begin{scope}[cdar]
      \draw (m-1-1)-- node {$\pr_2$} (m-1-2);
      \draw (m-1-1)-- node[swap] {$\pr_1$} (m-2-1);
      \draw (m-1-2)-- node {$f_1$} (m-2-2);
      \draw (m-2-1)-- node {$b_2$} (m-2-2);
    \end{scope}
  \end{tikzpicture}
  \]
  \emph{\(\eta\)\nb-Cartesian} if \(f_1\) and~\(b_2\) intersect
  smoothly with excess intersection bundle~\(\eta\).
\end{definition}

If~\(M\) is a smooth submanifold of \(M_1\times M_2\), then \(\Tang M
\subseteq \Tang(M_1\times M_2)\); and if \((\xi_1,\xi_2)\in
\Tang(M_1\times M_2)\) is tangent to~\(M\), then
\(Df_1(\xi_1)=Db_2(\xi_2)\) in~\(\Tang Y\).  These pairs
\((\xi_1,\xi_2)\) form a subspace of \(\Tang(M_1\times M_2)|_M =
\pr_1^*\Tang M_1 \oplus \pr_2^*\Tang M_2\), which in general need not
be a vector bundle, that is, its rank need not be locally constant.
The smooth intersection assumption forces it to be a subbundle: the
kernel of the map in~\eqref{testmap2}.  Hence
the excess intersection bundle is a vector bundle over~\(M\), and
there is the following exact sequence of vector bundles over~\(M\):
\begin{equation}
  \label{eq:excess_bundle_exact_sequence}
  0 \to \Tang M
  \to \pr_1^*(\Tang M_1)\oplus \pr_2^*(\Tang M_2)
  \xrightarrow{(Df_1,-Db_2)} (f_1\circ \pr_1)^*(\Tang Y)
  \to \eta
  \to 0.
\end{equation}

\begin{example}
  \label{exa:smooth_intersect_diagonal}
  Let \(M_1=M_2=X\) and let \(f_1=b_2=i\colon X\to Y\) be an injective
  immersion.  Then \(M_1\times_Y M_2\cong X\) is the diagonal in
  \(M_1\times M_2=X^2\), which is a smooth submanifold.  Furthermore,
  if \((\xi_1,\xi_2)\in \Tang M_1\times\Tang M_2\) satisfy
  \(Df_1(\xi_1)=Db_2(\xi_2)\), then \(\xi_1=\xi_2\) because \(Di\colon
  \Tang M\to\Tang Y\) is assumed injective.  Hence \(M_1\) and~\(M_2\)
  intersect smoothly, and the excess intersection bundle is the normal
  bundle of the immersion~\(i\).
\end{example}

\begin{example}
  \label{exa:touching_circles}
  \begin{figure}[htbp]
    \noindent\begin{tikzpicture}
      \draw [name path=cone] (0.1\linewidth,0) circle (0.08\linewidth);
      \draw [name path=ctwo] (0.14\linewidth,0) circle (0.09\linewidth);
      \fill [name intersections={of=cone and ctwo}]
      (intersection-1) circle (2pt) node[above] {$p_1$}
      (intersection-2) circle (2pt) node[below] {$p_2$};

      \draw (0.375\linewidth,0) circle (0.1\linewidth);
      \draw (0.35\linewidth,0) circle (0.05\linewidth);

      \draw (0.625\linewidth,0) circle (0.1\linewidth) node {\(M_1=M_2\)};

      \draw [name path=cthree] (0.875\linewidth,0) circle (0.1\linewidth);
      \draw [name path=cfour] (0.825\linewidth,0) circle (0.05\linewidth);
      \fill [name intersections={of=cthree and cfour}]
      (intersection-1) circle (2pt) node[right] {$p$};
    \end{tikzpicture}
    \caption{Four possible configurations of two circles in the plane}
    \label{fig:circles_in_plane}
  \end{figure}
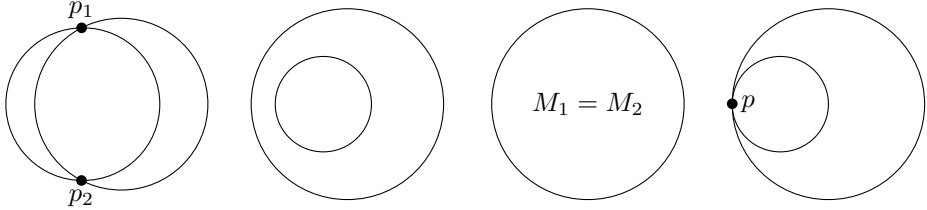
  Let \(M_1\) and~\(M_2\) be two circles embedded in \(Y=\R^2\).
  The four possible configurations are illustrated in
  Figure~\ref{fig:circles_in_plane}.
  \begin{enumerate}
  \item The circles meet in two points.  Then \(M=\{p_1,p_2\}\) and
    the intersection is transverse.
  \item The two circles are disjoint.  Then \(M=\emptyset\) and the
    intersection is transverse.
  \item The two circles are identical.  Then \(M=M_1=M_2\).  The
    intersection is not transverse, but smooth by
    Example~\ref{exa:smooth_intersect_diagonal}; the excess
    intersection bundle is the normal bundle of the circle, which is
    trivial.
  \item The two circles touch in one point.  Then \(M\defeq
    M_1\times_Y M_2 = \{p\}\), so that the tangent bundle of~\(M\) is
    zero-dimensional.  But \(\Tang_p M_1\cap\Tang_p M_2\) is
    one-dimensional because \(\Tang_p M_1=\Tang_p M_2\).  Hence the
    embeddings do \emph{not} intersect smoothly.
  \end{enumerate}
\end{example}

\begin{remark}
  \label{rem:intersect_smooth_normal_form}
  The maps \(f\colon M_1\to Y\) and \(b\colon M_2\to Y\)
  intersect smoothly if and only if \(f\times b\colon M_1\times
  M_2 \to Y\times Y\) and the diagonal embedding \(Y\to Y\times
  Y\) intersect smoothly; both pairs of maps have the same
  excess intersection bundle.  Thus we may always normalise
  intersections to the case where one map is a diagonal
  embedding and thus an embedding.
\end{remark}

\begin{example}
  \label{exa:defect_explained}
  Let~\(\eta\) be a \(\K_G\)\nb-oriented vector bundle over~\(X\).  Let
  \(M_1=M_2=X\), \(Y=\eta\), and let \(f_1=b_2=\zeta\colon Y\to\eta\)
  be the zero section of~\(\eta\).  This is a special case of
  Example~\ref{exa:smooth_intersect_diagonal}.  The maps \(f_1\)
  and~\(b_2\) intersect smoothly with excess intersection
  bundle~\(\eta\).

  In this example it is easy to compose the geometric correspondences
  \(X=X\to\eta\) and \(\eta\leftarrow X=X\).  A Thom modification of
  the first one along the \(\K_G\)\nb-oriented vector bundle~\(\eta\)
  gives the special correspondence
  \[
  X\leftarrow(\eta,\tau_\eta) =\eta,
  \]
  where \(\tau_\eta\in\RK^*_{G,X}(\eta)\) is the Thom class
  of~\(\eta\).  The intersection product of this with \(\eta\leftarrow
  X=X\) is \(X=(X,\zeta^*(\tau_\eta))=X\), that is, it is the class in
  \(\GKK^G_*(X,X)\) of \(\zeta^*(\tau_\eta)\in\RK^*_G(X)\).  This
  \(\K\)\nb-theory class is the restriction of~\(\tau_\eta\) to the
  zero section of~\(\eta\).  By the construction of the Thom class, it
  is the \(\K\)\nb-theory class of the spinor bundle of~\(\eta\).
\end{example}

\begin{definition}
  \label{def:Euler_defect}
  Let~\(\eta\) be a \(\K_G\)\nb-oriented \(G\)\nb-vector bundle over a
  \(G\)\nb-space~\(X\).  Let \(\zeta\colon X\to\eta\) be the zero
  section and let \(\tau_\eta\in\RK^*_{G,X}(\eta)\) be the Thom class.
  The \emph{Euler class} of~\(\eta\) is \(\zeta^*(\tau_\eta)\), the
  restriction of~\(\tau_\eta\) to the zero section.
\end{definition}

By definition, the Euler class is the composition of the
correspondences \(\pt \leftarrow X\to\eta\) and \(\eta\leftarrow X=X\)
involving the zero section \(\zeta\colon X\to\eta\) in both cases.

\begin{example}
  Assume that there is a \(G\)\nb-equivariant section \(s\colon X \to
  \eta\) of~\(\eta\) with isolated simple zeros; that is, \(s\)
  and~\(\zeta\) are transverse.  The linear homotopy connects~\(s\) to
  the zero section and hence gives an equivalent correspondence
  \(\eta\xleftarrow{s} X=X\).  Since \(s\) and~\(\zeta\) are
  transverse by assumption, the composition is \(X\leftarrow Z\to X\),
  where~\(Z\) is the zero set of~\(s\) and the maps \(Z\to X\) are the
  inclusion map, suitably \(\K_G\)\nb-oriented.
\end{example}

\begin{example}
  \label{ptex}
  Let \(M_1 = S^1\), \(M_2 = S^2\), \(Y = \R^3\), \(b_2\colon M_2 \to
  \R^3\) be the standard embedding of the \(2\)\nb-sphere in~\(\R^3\),
  and let \(f_1\colon M_1 \to M_2 \to \R^3\) be the embedding
  corresponding to the equator of the circle.  Then \(M_1\times_Y M_2
  = M_1\times_{M_2} M_2 = M_1\), embedded diagonally into \(M_1\times
  M_1\subset M_1\times M_2\).  This is a case of smooth intersection.
  The excess intersection bundle is the restriction to the equator of
  the normal bundle of the embedding~\(b_2\).  This is isomorphic to
  the rank-one trivial bundle on~\(S^2\).  Hence the Euler class
  \(e(\eta)\) is zero in this case.
\end{example}



\begin{theorem}
  \label{theorem:composition}
  Let
  \begin{equation}
    \label{eq:composition_smooth_intersection_data}
    X\xleftarrow{b_1} (M_1,\xi_1)
    \xrightarrow{f_1} Y
    \xleftarrow{b_2} (M_2, \xi_2)
    \xrightarrow{f_2} Z
  \end{equation}
  be a pair of \(G\)\nb-equivariant correspondences as in~\eqref{csm}.
  Assume that \(b_2\) and \(f_1\) intersect smoothly and with a
  \(\K_G\)\nb-oriented excess intersection bundle~\(\eta\).  Then the
  composition of~\eqref{eq:composition_smooth_intersection_data} is
  represented by the \(G\)\nb-equivariant correspondence
  \begin{equation}
    \label{eq:composition_smooth_intersection}
    X \xleftarrow{b_1\circ\pr_1}
    \bigl(M_1\times_Y M_2, e(\eta) \otimes
    \pr_1^*(\xi_1)\otimes\pr_2^*(\xi_2)\bigr)
    \xrightarrow{f_2\circ\pr_2} Z,
  \end{equation}
  where \(e(\eta)\) is the Euler class and the projection
  \(\pr_2\colon M_1\times_Y M_2\to M_2\) carries the
  \(\K_G\)\nb-orientation induced by the \(\K_G\)\nb-orientations on
  \(f_1\) and~\(\eta\) \textup(explained below\textup).
\end{theorem}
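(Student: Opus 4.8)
The plan is to reduce to the transverse intersection product of Section~\ref{subsec:trans} by a Thom modification of~\(f_1\), and then to recognise the resulting correspondence on the large space \(V_1\times_Y M_2\) as a Thom modification of the asserted correspondence on \(M\defeq M_1\times_Y M_2\); the Euler class~\(e(\eta)\) will appear because the Thom class of~\(V_1\), once pulled back, meets the submanifold~\(M\) not transversely but with excess~\(\eta\). Concretely, write the forward map of the first correspondence as a normally non-singular map \(f_1=(V_1,E_1,\hat{f}_1)\) and apply the Thom modification leading to~\eqref{c1}: the first correspondence becomes the special one whose forward map is the submersion \(g\defeq\pi_{E_1}\circ\hat{f}_1\colon V_1\to Y\), with \(\K\)\nb-theory class \(\tau_{V_1}\otimes\pi_{V_1}^*(\xi_1)\). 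A submersion is transverse to every smooth map, so \(g\) and~\(b_2\) are transverse and the composition is the honest intersection product~\eqref{c2} on \(N\defeq V_1\times_Y M_2\), carrying the class \(\pr_{V_1}^*(\tau_{V_1})\otimes\pr_1^*(\xi_1)\otimes\pr_2^*(\xi_2)\) and the maps \(b_1\circ\pi_{V_1}\circ\pr_1\) and \(f_2\circ\pr_2\). Since~\(g\) carries the zero section \(M_1\subseteq V_1\) to the underlying map of~\(f_1\), the space~\(M\) embeds into~\(N\) as a smooth submanifold through the zero section; and, representing \(\tau_{V_1}\) by a class supported in an arbitrarily thin tube about the zero section of~\(V_1\), the class \(\pr_{V_1}^*(\tau_{V_1})\) is supported in an arbitrarily small neighbourhood of~\(M\) in~\(N\).

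The next step is to identify the normal bundle~\(\nu\) of~\(M\) in~\(N\). Differentiating \(\pr_{V_1}\colon N\to V_1\) along~\(M\), the induced map \(\nu\to\pr_1^*(V_1)\) on normal bundles fits, by a short diagram chase from~\eqref{eq:excess_bundle_exact_sequence} together with the surjectivity of~\(Dg\), into a short exact sequence of \(G\)\nb-vector bundles over~\(M\),
\begin{equation*}
  0 \longrightarrow \nu \longrightarrow \pr_1^*(V_1) \longrightarrow \eta \longrightarrow 0 ,
\end{equation*}
with cokernel the excess intersection bundle~\(\eta\). Hence \(\pr_1^*(V_1)\cong\nu\oplus\eta\) \(G\)\nb-equivariantly, and, since \(V_1\) and~\(\eta\) are \(\K_G\)\nb-oriented, \(\nu\) inherits a \(\K_G\)\nb-orientation. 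A parallel computation of stable normal bundles, using~\eqref{eq:excess_bundle_exact_sequence} and Lemma~\ref{lem:normal_bundle_smooth}, shows that the stable normal bundle of \(\pr_2\colon M\to M_2\), namely \([\pr_2^*\Tang M_2]-[\Tang M]\), equals \(\pr_1^*\bigl([f_1^*\Tang Y]-[\Tang M_1]\bigr)-[\eta]\) in \(\RK^0_G(M)\); this is \(\K_G\)\nb-oriented by the orientations on~\(f_1\) and~\(\eta\), and is the \(\K_G\)\nb-orientation on~\(\pr_2\) referred to in the statement (combined with the orientation of~\(f_2\) to orient \(f_2\circ\pr_2\)). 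The same orientation will reappear from the inverse Thom modification below, which serves as a consistency check.

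The hard part is the following identity, which is where the idea of Baum and Block~\cite{Baum-Block:Excess} enters. Fix a \(G\)\nb-invariant tubular neighbourhood \(U\subseteq N\) of~\(M\), identified with the total space of~\(\nu\), with projection \(\pi_\nu\colon\nu\to M\); then
\begin{equation*}
  \pr_{V_1}^*(\tau_{V_1})|_U \;=\; \tau_\nu\otimes\pi_\nu^*\bigl(e(\eta)\bigr)
  \qquad\text{in } \RK^*_{G,M}(U) ,
\end{equation*}
where \(e(\eta)=\zeta_\eta^*(\tau_\eta)\) is the Euler class of~\(\eta\) and \(\nu\) carries the orientation above. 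To see this, note that in the tubular coordinates \(\pr_{V_1}|_U\colon\nu\to V_1\) sends the zero section~\(M\) into the zero section \(M_1\subseteq V_1\), has~\(M\) as the exact preimage of that zero section, and has fibrewise linearisation along~\(M\) the inclusion \(\nu\hookrightarrow\nu\oplus\eta\cong\pr_1^*(V_1)\) followed by the bundle map \(\pr_1^*(V_1)\to V_1\) covering \(\pr_1\colon M\to M_1\). After shrinking~\(U\), straight-line homotopies straighten both the base-point map (to \(\pr_1\circ\pi_\nu\)) and the fibrewise part (to that linear bundle map) through maps whose preimage of the zero section is still~\(M\) --- the fibrewise linear term being injective on~\(\nu\), no new zeros are created --- so the pulled-back Thom class is unchanged. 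Finally \(\tau_{V_1}\) pulls back along \(\pr_1^*(V_1)\to V_1\) to \(\tau_{\nu\oplus\eta}=\tau_\nu\otimes\tau_\eta\), and restricting to \(\nu=\nu\oplus 0\) replaces \(\tau_\eta\) by \(\pi_\nu^*\zeta_\eta^*(\tau_\eta)=\pi_\nu^*(e(\eta))\). One should also check that the classes occurring along the homotopy have \(X\)\nb-compact support for the map \(b_1\circ\pi_{V_1}\circ\pr_1\), which is routine using the support conditions on \(\xi_1\) and~\(\xi_2\).

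Granting the identity, the proof concludes quickly. The correspondence~\eqref{c2} on~\(N\) has \(\K\)\nb-theory class supported in \(U\cong\nu\), equal there to \(\tau_\nu\otimes\pi_\nu^*\bigl(e(\eta)\otimes\pr_1^*(\xi_1)\otimes\pr_2^*(\xi_2)\bigr)\) (after pulling \(\pr_1^*(\xi_1)\) and \(\pr_2^*(\xi_2)\) back along the tubular retraction \(U\to M\)). Applying the inverse of the Thom modification along the \(\K_G\)\nb-oriented bundle~\(\nu\) --- using that a geometric correspondence depends only on a neighbourhood of the support of its \(\K\)\nb-theory class, as already exploited for Thom modifications in~\cite{Emerson-Meyer:Correspondences} --- this correspondence is equivalent to the one on~\(M\) with backward map \(b_1\circ\pr_1\), forward map \(f_2\circ\pr_2\) with the \(\K_G\)\nb-orientation found above, and \(\K\)\nb-theory class \(e(\eta)\otimes\pr_1^*(\xi_1)\otimes\pr_2^*(\xi_2)\), which is exactly~\eqref{eq:composition_smooth_intersection}. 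As a sanity check, for the zero-section data of Example~\ref{exa:defect_explained} one has \(\nu=0\) and the argument recovers the class of~\(e(\eta)\) in \(\GKK^G_*(X,X)\). One could alternatively invoke Remark~\ref{rem:intersect_smooth_normal_form} first to normalise~\(b_2\) to a diagonal embedding, making the tubular-neighbourhood picture slightly cleaner, but this is not essential.
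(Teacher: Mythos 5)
Your proposal is correct and follows essentially the same route as the paper's proof: Thom-modify \(f_1\) to reduce to a transverse intersection on \(V_1\times_Y M_2\), identify the normal bundle of \(M_1\times_Y M_2\) there via the splitting \(\nu\oplus\eta\cong\pr_1^*(V_1)\), shrink the support of \(\tau_{V_1}\) to a tubular neighbourhood and straighten by homotopy, and use \(\tau_{\nu\oplus\eta}=\tau_\nu\otimes\tau_\eta\) restricted to \(\nu\) to produce \(\tau_\nu\otimes\pi_\nu^*(e(\eta))\). The only cosmetic difference is that you undo the Thom modification along~\(\nu\) at the end, whereas the paper applies it to the asserted correspondence on \(M_1\times_Y M_2\) and compares on~\(\nu\); these are the same equivalence used in opposite directions.
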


In the above situation of smooth intersection, we call the
diagram~\eqref{eq:intersection_diagram} an
\emph{\(\eta\)\nb-intersection diagram}.  It still computes the
composition, but we need the Euler class of the excess intersection
bundle~\(\eta\) to compensate the lack of transversality.

We describe the canonical \(\K_G\)\nb-orientation of \(\pr_2\colon
M_1\times_Y M_2\to M_2\).  The excess intersection bundle~\(\eta\) is
defined so as to give an exact sequence of vector
bundles~\eqref{eq:excess_bundle_exact_sequence}. From this it follows that
\[
[\eta] = (f_1\circ \pr_1)^*[\Tang Y] + \Tang M - \pr_1^*[\Tang M_1] - \pr_2^*[\Tang M_2].
\]
On the other hand, the stable normal bundle~\(N\pr_2 \) of~\(\pr_2\)
is equal to \(\pr_2^*[\Tang M_2]- [\Tang M]\).  Hence
\[
[\eta ] = \pr_1^*\bigl( f_1^*[\Tang Y] - [\Tang M_1]\bigr)- N\pr_2.
\]
A \(\K_G\)-orientation on \(f_1\) means a stable \(\K_G\)-orientation
on \(Nf_1 = f_1^*[\Tang Y] - [\Tang M_1]\).  If such an orientation is
given, it pulls back to one on \(\pr_1^*\bigl( f_1^*[\Tang Y] - [\Tang
M_1]\bigr)\), and then (stable) \(\K_G\)-orientations on~\([\eta]\)
and on~\(N\pr_2\) are in \(1\)-to-\(1\)-correspondence.  In
particular, a \(\K_G\)\nb-orientation on the bundle~\(\eta\) induces
one on the normal bundle of~\(\pr_2\).  This induced
\(\K_G\)\nb-orientation on~\(\pr_2\) is used
in~\eqref{eq:composition_smooth_intersection}.
(\cite{Emerson-Meyer:Normal_maps}*{Lemma 5.13} justifies working with
\(\K_G\)\nb-orientations on stable normal bundles.)

\begin{proof}[Proof of Theorem~\textup{\ref{theorem:composition}}]
  Lift~\(f_1\) to a \(G\)\nb-equivariant smooth normally non-singular
  map \((V_1, E_1, \hat{f}_1)\).  The composition
  of~\eqref{eq:composition_smooth_intersection_data} is defined
  in~\cite{Emerson-Meyer:Correspondences}*{Section 2.5} as the
  intersection product
  \begin{equation}
    \label{ipe1}
    X \xleftarrow{b_1\circ\pi_{V_1}\circ \pr_{V_1}} V_1\times_Y M_2
    \xrightarrow{f_2\circ\pr_2} Z
  \end{equation}
  with \(\K\)\nb-theory datum \(\pr_{V_1}^*(\tau_{V_1}) \otimes
  \pi_{V_1}^*(\xi_1)\otimes \pr_2^*(\xi_2) \in \RK^*_{G,X}(V_1\times_Y
  M_2)\).  We define the manifold \(V_1\times_Y M_2\) using the
  (transverse) maps \(\pi_{E_1}\circ \hat{f}_1\colon V_1 \to Y\) and
  \(b_2\colon M_2 \to Y\).  We must compare this with the
  correspondence in the statement of the theorem.

  We have a commuting square of embeddings of smooth manifolds
  \begin{equation}
    \label{eq:excess_intersection_diagram}
    \begin{tikzpicture}[baseline=(current bounding box.west)]
      \matrix (m) [cd]{
        M_1\times_Y M_2& M_1\times M_2\\
        V_1\times_Y M_2& V_1\times M_2\\};
      \begin{scope}[->,auto]
        \draw (m-1-1)-- node {$\iota_0$} (m-1-2);
        \draw (m-1-1)-- node {$\zeta_0$} (m-2-1);
        \draw (m-2-1)-- node {$\iota_1$} (m-2-2);
        \draw (m-1-2)-- node {$\zeta_1$} (m-2-2);
      \end{scope}
    \end{tikzpicture}
  \end{equation}
  where the vertical maps are induced by the zero section \(M_1\to
  V_1\) and the horizontal ones are the obvious inclusion maps.  The
  map~\(\zeta_0\) is a smooth embedding because the other three maps
  in the square are so.

  Let \(N\iota_0\) and \(\nu \defeq N\zeta_0\) denote the normal
  bundles of the maps \(\iota_0\) and~\(\zeta_0\)
  in~\eqref{eq:excess_intersection_diagram}.  The normal
  bundle of~\(\iota_1\) is isomorphic to the pull-back of~\(\Tang Y\)
  because
  \(V_1\to Y\) is submersive.  Since \(M_1\times M_2\to V_1\times
  M_2\) is the zero section of the pull back of the vector
  bundle~\(V_1\) to \(M_1\times M_2\), the normal bundle
  of~\(\zeta_1\) is isomorphic to~\(\pr_1^*(V_1)\).  Recall that
  \(M\defeq M_1\times_Y M_2\).  We get a diagram of vector bundles
  over~\(M\):
  \[
  \begin{tikzpicture}[baseline=(current bounding box.west),column sep=4em]
    \matrix (m) [cd]{
      &0&0&0&\\
      0& \Tang M& \Tang(M_1\times M_2)|_M& N\iota_0 &0\\
      0& \Tang(V_1\times_Y M_2)|_M& \Tang(V_1\times M_2)|_M& f^*(\Tang Y)&0\\
      0& \nu& \pr_1^*(V_1)& \eta&0\\
      &0&0&0&\\};
    \begin{scope}[->,auto]
      \draw (m-1-2)-- (m-2-2);
      \draw (m-1-3)-- (m-2-3);
      \draw (m-1-4)-- (m-2-4);

      \draw (m-2-2)-- node {$D\zeta_0$} (m-3-2);
      \draw (m-2-3)-- node {$D\zeta_1$} (m-3-3);
      \draw (m-2-4)-- (m-3-4);

      \draw (m-3-2)-- (m-4-2);
      \draw (m-3-3)-- (m-4-3);
      \draw (m-3-4)-- (m-4-4);

      \draw (m-4-2)-- (m-5-2);
      \draw (m-4-3)-- (m-5-3);
      \draw (m-4-4)-- (m-5-4);

      \draw (m-2-1)-- (m-2-2);
      \draw (m-3-1)-- (m-3-2);
      \draw (m-4-1)-- (m-4-2);

      \draw (m-2-2)-- node {$D\iota_0$} (m-2-3);
      \draw (m-3-2)-- node {$D\iota_1$} (m-3-3);
      \draw (m-4-2)-- (m-4-3);

      \draw (m-2-3)-- (m-2-4);
      \draw (m-3-3)-- (m-3-4);
      \draw[dotted] (m-4-3)-- (m-4-4);

      \draw (m-2-4)-- (m-2-5);
      \draw (m-3-4)-- (m-3-5);
      \draw (m-4-4)-- (m-4-5);
    \end{scope}
  \end{tikzpicture}
  \]
  The first two rows and the first two columns are exact by definition
  or by our description of the normal bundles of \(\zeta_1\)
  and~\(\iota_1\).  The third row is exact with the excess
  intersection bundle~\(\eta\)
  by~\eqref{eq:excess_bundle_exact_sequence}.  Hence the dotted arrow
  exists and makes the third row exact.  Since extensions of
  \(G\)\nb-vector bundles always split, we get
  \[
  \nu\oplus\eta \cong \pr_1^*(V_1).
  \]
  Since~\(\eta\) and~\(V_1\) are \(\K_G\)\nb-oriented, the
  bundle~\(\nu\) inherits a \(\K_G\)\nb-orientation.

  We apply Thom modification with the \(\K_G\)\nb-oriented
  \(G\)\nb-vector bundle~\(\nu\) to the correspondence
  in~\eqref{eq:composition_smooth_intersection}.  This gives the
  geometric correspondence
  \begin{equation}
    \label{ipe2}
    X \xleftarrow{b_1\circ\pr_1\circ\pi_\nu} \nu
    \xrightarrow{f_2\circ\pr_2\circ\pi_\nu} Z
  \end{equation}
  with \(\K\)\nb-theory datum
  \[
  \xi
  \defeq \tau_\nu\otimes \pi_\nu^*\bigl(e(\eta)\otimes \pr_1^*(\xi_1)
  \otimes \pr_2^*(\xi_2)\bigr)\in \RK^*_{G,X}(\nu).
  \]

  The Tubular Neighbourhood Theorem gives a \(G\)\nb-equivariant open
  embedding \(\hat{\zeta}_0\colon \nu \to V_1\times_Y M_2\) onto some
  \(G\)\nb-invariant open neighbourhood of \(M\) (see
  \cite{Emerson-Meyer:Normal_maps}*{Theorem 3.18}).

  We may find an open \(G\)\nb-invariant neighbourhood~\(U\) of the
  zero section in~\(V_1\) such that \(U\times_Y M_2\subseteq
  V_1\times_Y M_2\) is contained in the image of~\(\hat\zeta_0\) and
  relatively \(M\)\nb-compact.  We may choose the Thom class
  \(\tau_{V_1}\in\K^{\dim V_1}_G(V_1)\) to be supported in~\(U\).
  Hence we may assume that \(\pr_1^*(\tau_{V_1})\), the pull-back
  of~\(\tau_{V_1}\) along the coordinate projection \(\pr_1^*\colon
  V_1\times_Y M_2\to V_1\), is supported inside a relatively
  \(M\)\nb-compact subset of~\(\hat{\zeta}_0(\nu)\).

  Then \cite{Emerson-Meyer:Correspondences}*{Example 2.14} provides a
  bordism between the cycle in~\eqref{ipe1} and
  \begin{equation}
    \label{eq:ipe1_modified}
    X \xleftarrow{b_1\circ\pi_{V_1}\circ\pr_1\circ\hat{\zeta}_0} \nu
    \xrightarrow{f_2\circ\pr_2\circ\hat{\zeta}_0} Z,
  \end{equation}
  with \(\K\)\nb-theory class \(\pr_1^*(\tau_{V_1}) \otimes
  \hat{\zeta}_0^*\pr_1^*\pi_{V_1}^*(\xi_1)\otimes
  \hat{\zeta}_0^*\pr_2^*(\xi_2)\).

  Let \(s_t\colon \nu\to\nu\) be the scalar multiplication by
  \(t\in[0,1]\).  Composition with~\(s_t\) is a \(G\)\nb-equivariant
  homotopy
  \[
  \pi_{V_1}\pr_1\hat{\zeta}_0\sim\pr_1\pi_\nu\colon \nu\to M_1,\qquad
  \pr_2\hat{\zeta}_0\sim\pr_2\pi_\nu\colon \nu\to M_2.
  \]
  Hence \(s_t^*(\hat{\zeta}_0^*\pr_1^*\pi_{V_1}^*(\xi_1)\otimes
  \hat{\zeta}_0^*\pr_2^*(\xi_2))\) is a \(G\)\nb-equivariant homotopy
  \[
  \hat{\zeta}_0^*\pr_1^*\pi_{V_1}^*(\xi_1)\otimes
  \hat{\zeta}_0^*\pr_2^*(\xi_2) \sim
  \pi_\nu^*\bigl(\pr_1^*(\xi_1) \otimes \pr_2^*(\xi_2)\bigr).
  \]
  When we tensor with \(\pr_1^*(\tau_{V_1})\), this homotopy has
  \(X\)\nb-compact support because the support
  of~\(\pr_1^*(\tau_{V_1})\) is relatively \(M\)\nb-compact.

  This gives a homotopy of geometric correspondences
  between~\eqref{ipe1} and the variant of~\eqref{ipe2} with
  \(\K\)\nb-theory datum
  \[
  \pr_1^*(\tau_{V_1}) \otimes
  \pi_\nu^*\pr_1^*(\xi_1)\otimes
  \pi_\nu^*\pr_2^*(\xi_2);
  \]
  the relative \(M\)\nb-compactness of the support of
  \(\pr_{V_1}^*(\tau_{V_1})\) ensures that the homotopy of
  \(\K_G\)\nb-cycles implicit here has \(X\)\nb-compact support.  (We
  use \cite{Emerson-Meyer:Correspondences}*{Lemma 2.12} here, but the
  statement of the lemma is unclear about the necessary compatibility
  between the homotopy and the support of~\(\xi\).)

  The \(\K\)\nb-theory class \(\pr_1^*(\tau_{V_1})\) in this
  formula is the restriction of the
  Thom class for the vector bundle \(\pr_1^*(V_1)\) over~\(M\)
  to~\(\nu\).  Since \(\pr_1^*(V_1)\cong \nu\oplus\eta\) and the Thom
  isomorphism for a direct sum bundle is the composition of the Thom
  isomorphisms for the factors, the Thom class of~\(\pr_1^*(V_1)\) is
  \(\pr_1^*(\tau_{V_1}) = \tau_\nu\otimes\tau_\eta\).  Restricting
  this to the subbundle~\(\nu\) gives \(\tau_\nu\otimes
  \pi_\nu^*(e(\eta))\).  Hence the \(\K\)\nb-theory classes that come
  from \eqref{ipe1} and~\eqref{ipe2} are equal.  This finishes the
  proof.
\end{proof}



\subsection{The geometric Lefschetz index formula}
\label{sec:geom_trace}

In this section we compute Lefschetz indices in the symmetric monoidal
category~\(\GKK^G\) for smooth \(G\)\nb-manifolds with boundary.  Our
computation is geometric and uses the intersection theory of
equivariant correspondences discussed in Sections \ref{subsec:trans}
and~\ref{subsec:ptrans}.

Let~\(X\) be a smooth compact \(G\)\nb-manifold, possibly with
boundary.  Let~\(\intr X\) be its interior.  Let
\begin{equation}
  \label{eq:correspondence_to_compute_trace}
  X\xleftarrow{b} M\xrightarrow{f} X,\qquad
  \xi\in\RK^*_{G,X}(M)
\end{equation}
be a \(\K_G\)\nb-oriented smooth geometric correspondence from~\(X\)
to itself, with~\(M\) of finite orbit type to ensure that \(f\colon
M\to X\) lifts to an essentially unique normally non-singular map.
Since~\(X\) is compact, \(\RK^*_{G,X}(M) = \K^*_G(M)\) is the usual
\(\K\)\nb-theory with compact support.  The \(\K_G\)\nb-orientation
for~\eqref{eq:correspondence_to_compute_trace} means a
\(\K_G\)\nb-orientation on the stable normal bundle of~\(f\).  This is
equivalent to giving a \(G\)\nb-vector bundle~\(V\) over~\(X\) and
\(\K_G\)\nb-orientations on \(\Tang M\oplus f^*(V)\) and \(\Tang X
\oplus V\).

If~\(X\) has a boundary, then the requirements for a smooth
correspondence are that~\(M\) be a smooth manifold with boundary of
finite orbit type, such that \(f(\partial M)\subseteq \partial X\)
and~\(f\) is transverse to~\(\partial X\).  This ensures that~\(f\)
has an essentially unique lift to a normally non-singular map
from~\(M\) to~\(X\) by
Proposition~\ref{pro:non-singular_smooth_with_boundary}.  Recall the
map \(\rho\colon X\to\intr X\), which is shrinking the collar
around~\(\partial X\).

\begin{theorem}
  \label{the:geo_trace_transversal}
  Let \(\alpha\in\GKK^G_i(X,X)\) be represented by a
  \(\K_G\)\nb-oriented smooth geometric correspondence as
  in~\eqref{eq:correspondence_to_compute_trace}.  Assume that \((\rho
  b,f)\colon M\to X\times X\) and the diagonal embedding \(X\to
  X\times X\) intersect smoothly with a
  \(\K_G\)\nb-oriented excess intersection bundle~\(\eta\).  Then
  \(Q_{\rho b,f} \defeq \{m\in M \mid \rho b(m)= f(m)\}\) is a smooth
  manifold without boundary.  For a certain canonical
  \(\K_G\)\nb-orientation on~\(Q_{\rho b,f}\),
  \(\Lef(\alpha)\in \GKK^G_i(X,\pt)\) is represented by the
  geometric correspondence \(X \leftarrow Q_{\rho b,f} \to
  \pt\) with \(\K\)\nb-theory class \(\xi|_{Q_{\rho
      b,f}}\otimes e(\eta)\) on~\(Q_{\rho b,f}\); here the map
  \(Q_{\rho b,f}\to X\) is given by \(m\mapsto \rho
  b(m)=f(m)\).

  The Lefschetz index of~\(\alpha\) in \(\GKK^G_i(\pt,\pt)\) is
  represented by the geometric correspondence \(\pt \leftarrow
  Q_{\rho b,f} \to \pt\) with \(\K_G\)\nb-theory class
  \(\xi|_{Q_{\rho b,f}}\otimes e(\eta)\) on~\(Q_{\rho b,f}\).

  The Lefschetz index of~\(\alpha\) is the index of the
  Dirac operator on~\(Q_{\rho b,f}\) with coefficients
  in~\(\xi|_{Q_{\rho b,f}}\otimes e(\eta)\).
\end{theorem}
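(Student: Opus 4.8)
The plan is to compute $\Lef(\alpha)\in\GKK^G_i(X,\pt)$ directly as a composition of geometric correspondences, using the \emph{explicit} unit $\eta$ and counit $\varepsilon$ for the duality $X\dashv\Nor\intr X$ provided by Theorem~\ref{the:compact_smooth_dualisable}. Throughout, write $Q\defeq Q_{\rho b,f}$. By definition $\Lef(\alpha)$ is the composite
\[
X\cong X\times\pt\xrightarrow{\id\times\eta}X\times X\times\Nor\intr X
\xrightarrow{m\times\id}X\times\Nor\intr X\xrightarrow{\alpha\times\id}
X\times\Nor\intr X\xrightarrow{\braid}\Nor\intr X\times X\xrightarrow{\varepsilon}\pt,
\]
where $m=[X\times X\xleftarrow{\Delta}X\xrightarrow{\id}X]$. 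First I would simplify the two ends. A short transverse-intersection computation (Section~\ref{subsec:trans}) shows that $(m\times\id)\circ(\id\times\eta)$ is represented by $C_1\defeq[X\xleftarrow{\id}X\xrightarrow{(\id,\zeta\rho)}X\times\Nor\intr X]$ with trivial \(\K\)\nb-theory class, and that $\varepsilon\circ\braid$ is represented by $C_3\defeq[X\times\Nor\intr X\xleftarrow{(\iota\pi,\id)}\Nor\intr X\to\pt]$, again with trivial class; here $(\id,\zeta\rho)$ and $\Nor\intr X\to\pt$ carry the $\K_G$\nb-orientations dictated by Theorem~\ref{the:compact_smooth_dualisable}. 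With $C_2\defeq\alpha\times\id=[X\times\Nor\intr X\xleftarrow{b\times\id}M\times\Nor\intr X\xrightarrow{f\times\id}X\times\Nor\intr X]$ carrying the \(\K\)\nb-theory class pulled back from $\xi$, we have $\Lef(\alpha)=C_3\circ C_2\circ C_1$.

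Next I would compose $C_2\circ C_1$. The forward map $(\id,\zeta\rho)$ of $C_1$ is transverse to the backward map $b\times\id$ of $C_2$, because the $\Nor\intr X$\nb-component of $D(b\times\id)$ and the $X$\nb-component of $D(\id,\zeta\rho)$ are each already surjective. Hence this composition is an honest (smooth) intersection product, and it yields $C_2\circ C_1=[X\xleftarrow{b}M\xrightarrow{(f,\zeta\rho b)}X\times\Nor\intr X]$ with \(\K\)\nb-theory class $\xi$ and a $\K_G$\nb-orientation on $(f,\zeta\rho b)$ inherited from those on $f$ and on $(\id,\zeta\rho)$. The remaining composition $C_3\circ(C_2\circ C_1)$ is the only one that fails to be transverse when $\eta\neq0$, and this is where Theorem~\ref{theorem:composition} enters. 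The coincidence space of $(f,\zeta\rho b)$ and $(\iota\pi,\id)$ over $X\times\Nor\intr X$ consists of the pairs $(m,\nu)$ with $\nu=\zeta\rho b(m)$ and $f(m)=\iota\pi(\nu)$; this forces $\nu=\zeta f(m)$ and $f(m)=\rho b(m)$, so the coincidence space is canonically diffeomorphic to $Q$, which is a manifold without boundary because $f(m)=\rho b(m)\in\intr X$ keeps $m$ away from $\partial M=f^{-1}(\partial X)$. A direct derivative computation along $Q$ identifies the cokernel of $\bigl(D(f,\zeta\rho b),-D(\iota\pi,\id)\bigr)$ with $\Tang X/\mathrm{Im}(D(\rho b)-Df)$, that is, with the excess intersection bundle $\eta$ of the hypothesis, and the smooth-intersection assumption on $(\rho b,f)$ and $\Delta$ translates into the smooth-intersection hypothesis of Theorem~\ref{theorem:composition}. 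That theorem then gives $\Lef(\alpha)=[X\xleftarrow{b|_Q}Q\to\pt]$ with \(\K\)\nb-theory class $\xi|_Q\otimes e(\eta)$ and a definite $\K_G$\nb-orientation on $\pr_2\colon Q\to\Nor\intr X$, hence on $Q\to\pt$. Since $b|_Q$ is smoothly homotopic to $(\rho b)|_Q=f|_Q$, bordism invariance of geometric correspondences (\cite{Emerson-Meyer:Correspondences}*{Lemma~2.12}) lets us replace the backward map by $m\mapsto\rho b(m)=f(m)$. This yields the first assertion, provided one checks that the $\K_G$\nb-orientation on $Q\to\pt$ just obtained agrees with the canonical one, namely the $\K_G$\nb-orientation on $\Tang Q$ induced via the exact sequence~\eqref{eq:excess_bundle_exact_sequence} by the $\K_G$\nb-orientations on $\eta$ and on the stable normal bundle of $f$.

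The second assertion is then formal. By the remark after Definition~\ref{maindef:lefindex}, $\lefind(\alpha)=\Lef(\alpha)\circ u$ with $u=[\pt\leftarrow X\xrightarrow{\id}X]$, and composing $[X\xleftarrow{f|_Q}Q\to\pt]$ with $u$ is a trivially transverse intersection (the forward map of $u$ is $\id_X$) that merely deletes the source $X$, leaving $Q$, the class $\xi|_Q\otimes e(\eta)$ and the $\K_G$\nb-orientation unchanged. For the third assertion: a geometric correspondence $\pt\leftarrow N\xrightarrow{p}\pt$ with $\K_G$\nb-oriented forward map $p$ and coefficient class $\theta\in\K^*_G(N)$ represents, under the identification $\GKK^G_*(\pt,\pt)=\Rep(G)$, the wrong-way class $p_!(\theta)$; and for a map $N\to\pt$ the wrong-way map sends $\theta$ to the equivariant index of the Dirac operator on $N$ (for the $\K_G$\nb-orientation at hand) twisted by $\theta$ (see \cite{Emerson-Meyer:Normal_maps}*{Section~5.3}). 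Applying this with $N=Q$ and $\theta=\xi|_Q\otimes e(\eta)$ gives the claim.

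I expect the main obstacle to be the $\K_G$\nb-orientation bookkeeping: the orientation data built into the unit and counit, together with the orientation on $f$, must be transported through the two transverse compositions and through the excess-intersection formula of Theorem~\ref{theorem:composition}, and the resulting orientation on $Q$ must then be matched with the intrinsic one coming from~\eqref{eq:excess_bundle_exact_sequence}. The one genuinely conceptual point is the identification of the excess intersection bundle of $C_3\circ(C_2\circ C_1)$ with the hypothesis bundle $\eta$; the collar retraction $\rho$ enters only to keep $Q$ away from $\partial X$, so that it is a manifold without boundary.
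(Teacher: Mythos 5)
Your proposal follows essentially the same route as the paper's proof: it expands \(\Lef(\alpha)\) using the explicit unit and counit of Theorem~\ref{the:compact_smooth_dualisable}, performs the transverse compositions first, and invokes Theorem~\ref{theorem:composition} for the single non-transverse composition with the counit, identifying the coincidence space with \(Q_{\rho b,f}\) and the excess bundle with \(\eta\), exactly as in the intersection diagram of Figure~\ref{fig:Lefschetz_intersection}. The only points where the paper does more work than you indicate are the two you already flag as obstacles: when \(\partial X\neq\emptyset\) the intermediate compositions take place over the manifold with corners \(X\times X\times\Nor\intr X\), which the paper circumvents by replacing the relevant correspondence by an equivalent special one via an explicit open embedding (a Thom modification along~\(E\)), and the final matching of the induced \(\K_G\)\nb-orientation on \(Q_{\rho b,f}\) with the canonical one coming from~\eqref{eq:Tang_Q_K-orient}, which the paper carries out via Lemma~\ref{lem:KG_orient_in_intersection} and the discussion following the proof.
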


\begin{proof}
  We abbreviate \(Q\defeq Q_{\rho b,f}\) throughout the proof.  We
  have \(Q\subseteq \intr M\) because \(\rho b(M)\subseteq
  \rho(X)\subseteq \intr X\) and \(f(\partial M) \subseteq \partial
  X\).  The intersection \(\intr M\times_{\intr X\times \intr X} \intr
  X\) is~\(Q\) and hence a smooth submanifold of~\(\intr M\).

  We compute \(\Lef(\alpha)\) using the dual of~\(X\)
  constructed in Theorem~\ref{the:compact_smooth_dualisable}.
  This involves a \(G\)\nb-vector bundle~\(\Nor X\) such that \(\Tang
  X\oplus \Nor X \cong X\times E\) for a \(\K_G\)\nb-oriented
  \(G\)\nb-vector space~\(E\).

  With the unit and counit from
  Theorem~\ref{the:compact_smooth_dualisable}, \(\Lef(\alpha)\)
  becomes the composition of the three geometric
  correspondences in the bottom zigzag in
  Figure~\ref{fig:Lefschetz_intersection}; here we already
  composed~\(\alpha\) with the multiplication correspondence,
  which simply composes~\(b\) with~\(\Delta\).
  \begin{figure}
    \centering
    \begin{tikzpicture}[baseline=(current bounding box.west)]
      \matrix (m) [cd,column sep=.9em]{
        &&&Q_{\rho b,f}\\
        &&M\\
        &X\times X&&M\times \Nor\intr X&&\Nor\intr X\\
        X&&X\times X\times \Nor\intr X&&X\times \Nor\intr X \cong
        \Nor\intr X\times X&&\pt\\};
      \begin{scope}[corr]
        \draw (m-1-4)-- node      {$j$} (m-2-3);
        \draw (m-1-4)-- node      {$\zeta f j$} (m-3-6);
        \draw (m-2-3)-- node      {$\Delta b$} (m-3-2);
        \draw (m-2-3)-- node      {$(\id,\zeta\rho b)$} (m-3-4);
        \draw (m-3-2)-- node      {$\pr_1$} (m-4-1);
        \draw (m-3-2)-- node[blw] {$\id\times (\id,\zeta\rho)$} (m-4-3.north west);
        \draw (m-3-4)-- node[blw] {$(\Delta b)\times\id$} (m-4-3.north east);
        \draw (m-3-4)-- node      {$f\times\id$} (m-4-5.north west);
        \draw (m-3-6)-- node      {$(\id,\iota\pi)$} (m-4-5.north east);
        \draw (m-3-6)-- (m-4-7);
      \end{scope}
    \end{tikzpicture}
    \caption{The intersection diagram for the computation of
      \(\Lef(\alpha)\) in the proof of
      Theorem~\ref{the:geo_trace_transversal}.  Here \(j\colon
      Q_{\rho b,f} \to M\) denotes the inclusion map;
      \(\zeta\)~the zero section \(X\to\Nor X\) or \(\intr
      X\to\Nor\intr X\); \(\pi\colon \Nor\intr X\to \intr X\)
      the bundle projection; \(\iota\colon \intr X\to X\) the
      inclusion; \(\Delta\colon X\to X\times X\) the diagonal
      embedding; \(\pr_1\colon X\times X\to X\) the projection
      onto the first factor.}
    \label{fig:Lefschetz_intersection}
  \end{figure}

  We first consider the small left square.  Computing its intersection
  space naively gives~\(M\), which is a manifold with boundary.  We
  would hope that this square is Cartesian.  But \(X\times X\) is only
  a manifold with corners if~\(X\) has a boundary, and we we did not
  discuss smooth correspondences in this generality.  Hence we check
  directly that the composition of the correspondences from \(X\) to
  \(X\times X\times \Nor\intr X\) and on to \(M\times \Nor \intr X\)
  is represented by \(X\leftarrow M\rightarrow M\times \Nor \intr X\).

  The manifold~\(\Nor\intr X\) is an open subset of~\(E\) by
  construction.  Hence the map
  \[
  \id\times (\id,\zeta\rho b)\colon X\times X\to
  X\times X\times\Nor\intr X
  \]
  extends to an open embedding
  \[
  \psi\colon X\times X\times E\to X\times X\times \Nor\intr
  X,\qquad
  (x_1,x_2,e)\mapsto
  \bigl(x_1,x_2,\zeta\rho(x_2)+h_{x_2}(\norm{e}^2)\cdot e\bigr),
  \]
  where \(h_{x_2}\colon \R_+\to\R_+\) is a diffeomorphism onto
  a bounded interval~\([0,t)\) depending smoothly and
  \(G\)\nb-invariantly on~\(x_2\), such that the \(t\)\nb-ball
  in~\(E\) around \(\zeta\rho(x_2)\in\Nor\intr X\) is contained
  in~\(\Nor\intr X\).

  The map~\(\psi\) gives a special
  correspondence
  \[
  X\xleftarrow{\pr_1\circ\pi_E} X\times X\times E
  \xrightarrow{\psi} X\times X\times\Nor\intr X
  \]
  with \(\K\)\nb-theory class the pull-back of the Thom class
  of~\(E\).  This is equivalent to the given correspondence
  from~\(X\) to \(X\times X\times\Nor\intr X\) because of a
  Thom modification for the trivial vector bundle~\(E\) and a
  homotopy.  In particular, the \(\K_G\)\nb-orientation of
  \(\id\times(\id,\zeta\rho)\) that is implicit here is the one
  that we get from the \(\K_G\)\nb-orientation in the proof of
  Theorem~\ref{the:compact_smooth_dualisable}.

  For a special correspondence, the intersection always gives
  the composition product.  Here we get the space
  \begin{multline*}
    \bigl\{(x_1,x_2,e,m,y,\mu)\in X\times X\times E\times M\times
    \Nor\intr X \bigm|
    \\(x_1,x_2,\zeta\rho(x_2)+h_{x_2}(\norm{e}^2)\cdot e)) =
    (b(m),b(m),y,\mu)\bigr\}.
  \end{multline*}
  That is, \(x_1=x_2=b(m)\), \((y,\mu)=\rho
  b(m)+h_{b(m)}(\norm{e}^2)\cdot e)\).  Since \(m\in M\) and
  \(e\in E\) may be arbitrary and determine the other
  variables, we may identify this space with \(M\times E\).

  In the same way, we may replace
  \begin{equation}
    \label{eq:left_square_in_Lef_formula}
    X \xleftarrow{b} M \xrightarrow{(\id,\zeta\rho b)}
    M\times\Nor\intr X
  \end{equation}
  by an equivalent special correspondence with space \(M\times
  E\) in the middle.  This gives exactly the composition
  computed above.  Hence~\eqref{eq:left_square_in_Lef_formula}
  also represents the composition of the correspondences
  from~\(X\) to~\(M\times\Nor\intr X\) in
  Figure~\ref{fig:Lefschetz_intersection}.

  Composing further with \(f\times\id\) simply composes
  \(\K_G\)\nb-oriented normally non-singular maps.  Since we
  are now in the world of manifolds with boundary, we may
  identify smooth maps and smooth normally non-singular maps.
  The large right square contains the \(G\)\nb-maps
  \begin{align*}
    (f,\zeta\rho b)= (f\times\id)\circ (\id,\zeta\rho b)\colon
    M&\to X\times\Nor\intr X,\\
    (\iota\pi,\id)\colon \Nor\intr X&\to X\times\Nor\intr X.
  \end{align*}
  The pull-back contains those \((m,x,\mu) \in M\times \Nor\intr X\)
  with \((f(m),\rho b(x),0) = (x,x,\mu)\) in \(X\times\Nor\intr X\).
  This is equivalent to \(x=f(m) = \rho b(m)\) and \(\mu=0\), so that
  the pull-back is~\(Q\).  Since all vectors tangent to
  the fibres of~\(\Nor\intr X\) are in the image of
  \(D(\iota\pi,\id)\), the intersection is smooth and the excess
  intersection bundle is the same bundle~\(\eta\) as for \((f,\rho
  b)\colon \intr M\to \intr X\times \intr X\) and \(\delta\colon \intr
  X\to \intr X\times \intr X\).  Hence the right square is
  \(\eta\)\nb-Cartesian.

  Theorem~\ref{theorem:composition} shows that~\(\Lef(\alpha)\)
  is represented by a correspondence of the form \(X
  \xleftarrow{bj} Q \to \pt\), with a suitable class
  in~\(\K^*_G(Q)\) and a suitable \(\K_G\)\nb-orientation on
  the map \(Q\to\pt\) or, equivalently, the manifold~\(Q\).
  Here we may replace~\(bj\) by the properly homotopic map
  \(\rho bj=fj\).  It remains to describe the \(\K\)\nb-theory
  and orientation data.

  First, the given \(\K\)\nb-theory class~\(\xi\) on~\(M\) is pulled
  back to \(\xi\otimes1\) on \(M\times \Nor\intr X\) when we take the
  exterior product with~\(\Nor\intr X\).  In the intersection product,
  this is pulled back to~\(M\) along \((\id,\zeta\rho b)\),
  giving~\(\xi\) again, and then to~\(Q\) along~\(j\), giving the
  restriction of~\(\xi\) to \(Q\subseteq M\).  The unit and counit
  have~\(1\) as its \(\K\)\nb-theory datum.  Thus the Lefschetz
  index has \(\xi|_Q\otimes e(\eta)\in \K^*_G(Q)\) as its
  \(\K\)\nb-theory datum by Theorem~\ref{theorem:composition}.

  The given \(\K_G\)\nb-orientations on \(E\), \(f\) and~\(\eta\)
  induce \(\K_G\)\nb-orientations on all maps in
  Figure~\ref{fig:Lefschetz_intersection} that point to the right.  This
  is the \(\K_G\)\nb-orientation on the map \(Q\to\pt\) that we need.
  We describe it in greater detail after the proof of the theorem.

  The \(\K_G\)\nb-orientation on the map \(Q\to\pt\) is equivalent to
  a \(G\)\nb-equivariant Spin\(^\textup{c}\)-structure on~\(Q\).  The
  isomorphism
  \[
  \GKK^G_*(\pt,\pt) \to \GKK^G_*(\Cont(\pt),\Cont(\pt))
  \]
  described in \cite{Emerson-Meyer:Correspondences}*{Theorem 4.2} maps
  the geometric correspondence just described to the index of the
  Dirac operator on~\(Q\) for the chosen Spin\(^\textup{c}\)-structure
  twisted by \(\xi|_Q\otimes e(\eta)\).  This gives the last assertion
  of the theorem.
\end{proof}

Since the \(\K_G\)\nb-orientation on~\(Q_{\rho b,f}\) is necessary for
computations, we describe it more explicitly now.  We still use the
notation from the previous proof.

We are given \(\K_G\)\nb-orientations on \(E\), \(f\) and~\(\eta\).
The \(\K_G\)\nb-orientation on~\(f\) is equivalent to one on the
\(G\)\nb-vector bundle \(\Tang M \oplus f^*(\Nor X)\) over~\(M\)
because
\[
\Tang X\oplus\Nor X\cong X\times E
\]
is a \(\K_G\)\nb-oriented \(G\)\nb-vector bundle on~\(X\).

We already discussed during the proof of the theorem that
\(\id\times (\id,\zeta\rho)\) and \((\id,\zeta\rho)\) are
normally non-singular embeddings with normal bundle~\(E\); this
gives the correct \(\K_G\)\nb-orientation for these maps as
well.

A \(\K_G\)\nb-orientation on the map \((f,\zeta\rho b)\colon M\to
X\times \Nor\intr X\) is equivalent to one for \(\Tang M \oplus
f^*(\Nor X)\) because the bundle \(\Tang(X\times\Nor\intr
X)\oplus\pr_1^*(\Nor X)\) over \(X\times\Nor\intr X\) is isomorphic to
the trivial bundle with fibre~\(E\oplus E\) and \((f,\zeta\rho
b)^*\pr_1^*(\Nor X)=f^*(\Nor X)\).  We are already given such a
\(\K_G\)\nb-orientation from the \(\K_G\)\nb-orientation of~\(f\).

\begin{lemma}
  \label{lem:KG_orient_in_intersection}
  The given \(\K_G\)\nb-orientation on \(\Tang M \oplus f^*(\Nor X)\)
  is also the one that we get by inducing \(\K_G\)\nb-orientations on
  \((\id,\zeta\rho b)\) from \((\id,\zeta\rho)\) and on \(f\times\id\)
  from~\(f\) and then composing.
\end{lemma}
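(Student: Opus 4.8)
The plan is to push both \(\K_G\)\nb-orientations forward onto the stable normal bundle of the composite \((f,\zeta\rho b) = (f\times\id)\circ(\id,\zeta\rho b)\colon M\to M\times\Nor\intr X\to X\times\Nor\intr X\) and to compare them there. The two tools are Lemma~\ref{lem:normal_bundle_smooth}, which computes the stable normal bundle of a smooth normally non-singular map, and the fact that a \(\K_G\)\nb-orientation on such a map is the same datum as a stable \(\K_G\)\nb-orientation on its stable normal bundle (\cite{Emerson-Meyer:Normal_maps}*{Lemma 5.13}). The point is that both manipulations at issue — composing normally non-singular maps, and the isomorphism \(\Tang X\oplus\Nor X\cong X\times E\) used to rephrase the ``given'' orientation — act on the underlying stable normal bundle simply by adjoining trivial \(\K_G\)\nb-oriented summands with fibre~\(E\).

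First I would record the composition rule: for smooth normally non-singular maps \(M\xrightarrow{g}N\xrightarrow{h}P\) one has \(N(hg) = g^*(Nh)+Ng\) in \(\RK^0_G(M)\) by Lemma~\ref{lem:normal_bundle_smooth}, and the \(\K_G\)\nb-orientation on \(hg\) induced by given ones on \(g\) and~\(h\) is the direct sum of the pulled-back orientation on \(g^*(Nh)\) and the orientation on \(Ng\). Applying this to \(g = (\id,\zeta\rho b)\), \(h = f\times\id\), and using that the projection \(\pr_1\colon M\times\Nor\intr X\to M\) satisfies \(\pr_1\circ(\id,\zeta\rho b)=\id_M\), the induced orientation on \((f,\zeta\rho b)\) is the direct sum of the given orientation on \(Nf\) and the induced orientation on \(N(\id,\zeta\rho b)\).

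Next I would identify the two summands. The map \((\id,\zeta\rho b)\colon M\to M\times\Nor\intr X\) is the pullback of the embedding \((\id,\zeta\rho)\colon X\to X\times\Nor\intr X\) along \(b\colon M\to X\): the corresponding square is Cartesian and the intersection is transverse because the \(\Nor\intr X\)\nb-directions lie in the image of \(D(b\times\id)\). Hence \(N(\id,\zeta\rho b)=b^*N(\id,\zeta\rho)\) with pulled-back \(\K_G\)\nb-orientation, and by the computation inside the proof of Theorem~\ref{the:compact_smooth_dualisable} — which exhibits \((\id,\zeta\rho)^*\Tang(X\times\Nor\intr X)\) as \(\Tang X\oplus(X\times E)\) — the bundle \(N(\id,\zeta\rho)\) is the trivial bundle with fibre~\(E\), \(\K_G\)\nb-oriented through the chosen orientation on~\(E\). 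On the other hand the given \(\K_G\)\nb-orientation on~\(f\) is a stable \(\K_G\)\nb-orientation on \(Nf = f^*[\Tang X]-[\Tang M]\), and, via \(\Tang X\oplus\Nor X\cong X\times E\), it is equivalently a \(\K_G\)\nb-orientation on \(\Tang M\oplus f^*(\Nor X)\); correspondingly \(Nf = [M\times E] - [\Tang M\oplus f^*(\Nor X)]\) carries the orientation assembled from that of~\(E\) and the given one on \(\Tang M\oplus f^*(\Nor X)\). Plugging these into the previous step, the induced orientation on \((f,\zeta\rho b)\) becomes the orientation on \([M\times(E\oplus E)] - [\Tang M\oplus f^*(\Nor X)]\) built from the \(E\oplus E\)-orientation of the trivial part and the given orientation of~\(f\) — which is precisely the description of the ``given'' \(\K_G\)\nb-orientation on \((f,\zeta\rho b)\) recalled just before the present lemma, once one checks that the isomorphism \(\Tang(X\times\Nor\intr X)\oplus\pr_1^*(\Nor X)\cong\) trivial \(E\oplus E\) used there restricts along \((f,\zeta\rho b)\) to the direct sum of \(\Tang X\oplus\Nor X\cong X\times E\) and the canonical trivialisation \(\Tang\Nor\intr X\cong\Nor\intr X\times E\) from Theorem~\ref{the:compact_smooth_dualisable}.

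The main obstacle I anticipate is the bookkeeping of the two trivial copies of~\(E\): in the composition step they are produced in a definite order (one from \((\id,\zeta\rho b)\), one from rewriting \(Nf\)), while the pre-lemma description may present the splitting \(E\oplus E\) in the opposite order, and a transposition of the two factors could a priori alter the \(\K_G\)\nb-orientation. I expect to dispose of this exactly as in the proof of Theorem~\ref{the:compact_smooth_dualisable}: since~\(E\) is chosen even-dimensional, swapping (or reversing) copies of~\(E\) does not affect the \(\K_G\)\nb-orientation, so the two orientations agree. Apart from that, the only verifications needed are the two routine functoriality statements used above — that \(N(f\times\id)\) is the pullback of \(Nf\) with the pulled-back orientation, and that the orientation attached to \((\id,\zeta\rho b)\) in Figure~\ref{fig:Lefschetz_intersection} is indeed the one induced from \((\id,\zeta\rho)\) along~\(b\) — both of which follow immediately from the definition of pullbacks of normally non-singular maps and from the orientation induced on \(\pr_2\) in a Cartesian square (equation~\eqref{es200}).
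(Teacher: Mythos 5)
Your proposal is correct and is essentially the paper's argument, just repackaged: where you compute with stable normal bundle classes ($N(hg)=g^*(Nh)+Ng$, $N(\id,\zeta\rho b)\cong M\times E$, and the identification of the given orientation with a stable orientation on $[M\times(E\oplus E)]-[\Tang M\oplus f^*(\Nor X)]$), the paper writes out the same bookkeeping via the explicit isomorphism $\Tang(M\times\Nor\intr X)\oplus(f\pr_1)^*(\Nor X)\cong(\Tang M\oplus f^*(\Nor X))\oplus(\text{trivial }E)$ and the extension with quotient $E$ obtained by pulling back along the embedding $(\id,\zeta\rho b)$. In both versions the whole point is that composing only adjoins trivial $\K_G$-oriented $E$-summands, and the reordering issue is dismissed exactly as you do, using that $E$ is even-dimensional.
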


\begin{proof}
  The \(\K_G\)\nb-orientation of~\(f\) induces one for
  \(f\times\id\), which is equivalent to a
  \(\K_G\)\nb-orientation for
  \[
  \Tang(M\times\Nor\intr X) \oplus (f\pr_1)^*(\Nor X)
  \cong \bigl(\Tang M\oplus f^*(\Nor X)\bigr)\times (\Nor\intr X\times E).
  \]
  This \(\K_G\)\nb-orientation is exactly the direct sum
  orientation from \(\Tang M\oplus f^*(\Nor X)\) and~\(E\); no
  sign appears in changing the order because~\(E\) has even
  dimension.

  The map \(h=(\id,\zeta\rho)\) is a smooth embedding with normal
  bundle~\(E\).  Hence we get an extension of vector bundles
  \[
  \Tang M \oplus f^*(\Nor X) \mono
  h^*\bigl(\Tang(M\times\Nor\intr X) \oplus (f\pr_1)^*(\Nor
  X)\bigr)
  \epi E.
  \]
  The given \(\K_G\)\nb-orientations on \(\Tang M \oplus
  f^*(\Nor X)\) and~\(E\) induce one on the vector bundle in
  the middle.  This is the same one as the pull-back of the one
  constructed above.  This means that the
  \(\K_G\)\nb-orientation on \(\Tang M \oplus f^*(\Nor X)\)
  induced by~\(h\) is the given one.
\end{proof}

Equation~\eqref{eq:excess_bundle_exact_sequence} provides the
following exact sequence of vector bundles over~\(Q\):
\begin{multline*}
  0\to \Tang Q \xrightarrow{Dj,D(\zeta fj)}
  j^*(\Tang M)\oplus (\zeta fj)^*\Tang(\Nor\intr X)
  \\\xrightarrow{D(f,\zeta\rho b),-D(\iota\pi,\id)}
  (f,\zeta\rho b)^*\Tang(X\times\Nor\intr X) \to \eta \to 0.
\end{multline*}
Since \(-D(\iota\pi,\id)\) is injective, we may divide out
\(\Tang(\Nor\intr X)\) and its image to get the simpler short exact
sequence
\[
0\to \Tang Q \xrightarrow{Dj}
j^*\Tang M \xrightarrow{Df-D(\rho b)}
f^*\Tang X \to \eta \to 0.
\]
Then we add the identity map on \(j^*f^*(\Nor X)\) to get
\begin{equation}
  \label{eq:Tang_Q_K-orient}
  0\to \Tang Q \xrightarrow{(Dj,0)}
  j^*(\Tang M\oplus f^*\Nor X) \xrightarrow{(Df-D(\rho b),\id)}
  f^*(\Tang X\oplus\Nor X) \to \eta \to 0.
\end{equation}
In the last long exact sequence, the vector bundles \(j^*(\Tang
M\oplus f^*\Nor X)\), \(f^*(\Tang X\oplus\Nor X)\cong Q\times E\)
and~\(\eta\) carry \(\K_G\)\nb-orientations.  These together induce
one on~\(\Tang Q\).  This is the \(\K_G\)\nb-orientation that appears
in Theorem~\ref{the:geo_trace_transversal}.

Of course, the resulting geometric cycle should not depend on the
auxiliary choice of a \(\K_G\)\nb-orientation on~\(\eta\).  Indeed, if
we change it, then we change both \(e(\eta)\) and the
\(\K_G\)\nb-orientation on~\(\Tang Q\), and these changes cancel each
other.

We now consider some examples of Theorem~\ref{the:geo_trace_transversal}.

\subsubsection{Self-maps transverse to the identity map}
\label{sec:self-map_transverse}

Let~\(X\) be a compact \(G\)\nb-manifold with boundary and let
\(b\colon X\to X\) be a smooth \(G\)\nb-map that is transverse to the
identity map.  Thus~\(b\) has only finitely many isolated fixed points
and \(1-D_x b\colon \Tang_x X\to \Tang_x X\) is invertible for all
fixed points~\(x\) of~\(b\).  We turn~\(b\) into a geometric
correspondence~\(\alpha\) from~\(X\) to itself by taking
\(M=X\), \(f=\id\) (with
standard \(\K_G\)\nb-orientation) and \(\xi=1\).

Since~\(b\) has only finitely many fixed points, we may choose the
collar neighbourhood so small that all fixed points that do not lie
on~\(\partial X\) lie outside the collar neighbourhood, and such that
the fixed points of~\(\rho b\) are precisely the fixed points of~\(b\)
not on the boundary of~\(X\).  Hence \(\rho b=b\) near all fixed
points.

Then~\(\rho b\) is also transverse to the diagonal map and
Theorem~\ref{the:geo_trace_transversal} applies.  The intersection
space in Theorem~\ref{the:geo_trace_transversal} is
\[
Q = Q_{\rho b,\id} = \{x\in X\mid \rho b(x)=x\}
= \{x\in \intr X\mid b(x)=x\},
\]
the set of fixed points of~\(b\) in~\(\intr X\).  The \(\K\)\nb-theory
class on~\(Q\) is~\(1\) because \(\xi=1\) and the intersection is
transverse.  More precisely, the bundle~\(\eta\) is zero-dimensional,
and we may give it a trivial \(\K_G\)\nb-orientation for which
\(e(\eta)=1\).

Although~\(Q\) is discrete, the \(\K_G\)\nb-orientation of the
map \(Q\to\pt\) is important extra information: it provides the
signs that appear in the familiar Lefschetz fixed-point formula.
Equation~\eqref{eq:Tang_Q_K-orient} simplifies to
\[
0 \to \Tang Q \to (\Tang X\oplus\Nor X)|_Q
\xrightarrow{(\id-Db,\id)} (\Tang X\oplus\Nor X)|_Q \to 0.
\]
We left out~\(\eta\) because it is zero-dimensional and carries the
trivial \(\K_G\)\nb-orientation to ensure that \(e(\eta)=1\).  The
bundle~\(\Tang Q\) is also zero-dimensional.  But a zero-dimensional
bundle has non-trivial \(\K_G\)\nb-orientations.  The Clifford algebra
bundle of a zero-dimensional bundle is the trivial, trivially graded
one-dimensional bundle spanned by the unit section.  Thus an
irreducible Clifford module (spinor bundle) for it is the same as a
\(\Z/2\)\nb-graded \(G\)\nb-equivariant complex line bundle.

Let~\(S\) be the spinor bundle associated to the given
\(\K_G\)\nb-orientation on \(\Tang X\oplus\Nor X\cong E\).  The exact
sequence~\eqref{eq:Tang_Q_K-orient} says that the
\(\K_G\)\nb-orientation of~\(Q\) is the \(\Z/2\)\nb-graded
\(G\)\nb-equivariant complex line bundle~\(\ell\) such that
\((\id-Db)^*(S|_Q)\otimes\ell \cong S|_Q\) as Clifford modules.  This
uniquely determines~\(\ell\).  Thus~\(\ell\) measures whether~\(Db\)
changes orientation or not.  This is exactly the \emph{sign} of the
\(G\)\nb-equivariant vector bundle automorphism \(1-Db\) on~\(\Tang
X|_Q\), which is studied in detail
in~\cite{Emerson-Meyer:Equi_Lefschetz}.  In particular, it is shown
in~\cite{Emerson-Meyer:Equi_Lefschetz} that~\(\ell\) is the
complexification of a \(\Z/2\)\nb-graded \(G\)\nb-equivariant
\emph{real} line bundle.  The \(\Z/2\)\nb-grading gives one sign for
each \(G\)\nb-orbit in~\(Q\), namely, the index of \(\id-Db_x\).  In
addition, the sign gives a real character \(G_x\to\{-1,+1\}\) for each
orbit, where~\(G_x\) denotes the stabiliser of a point in the orbit.

Twisting the \(\K_G\)\nb-orientation by a line bundle over~\(Q\) has
the same effect as taking the trivial \(\K_G\)\nb-orientation and
putting this line bundle on~\(Q\).  Thus \(\Lef(\alpha)\) is
represented by the geometric correspondence
\[
X\leftarrow (Q,\sign(1-Db|_Q))\rightarrow \pt
\]
with the trivial \(\K_G\)\nb-orientation on the map
\(Q\to\pt\).

The Lefschetz index of~\(\alpha\) is the index of the Dirac
operator on~\(Q\) with coefficients in the line bundle
\(\sign(1-Db)|_Q\); this is simply the \(\Z/2\)\nb-graded
\(G\)\nb-representation on the space of sections of
\(\sign(1-Db)|_Q\), which is a certain finite-dimensional
\(\Z/2\)\nb-graded, real \(G\)\nb-representation.

If the group~\(G\) is trivial, then the Lefschetz index is a
number and \(\sign(1-Db)\) is the family of \(\sign(1-D_xb)\in
\{\pm1\}\) for \(x\in Q\).  If~\(X\) is connected, then all
maps \(X\leftarrow\pt\) give the same element in \(\GKK\).
Thus \(\Lef(\alpha)\) is \(\lefind(\alpha)\) times the point
evaluation class \([X\leftarrow \pt=\pt]\), and
\(\lefind(\alpha)\) is the sum of the indices of all fixed
points of~\(b\) in~\(\intr X\).

\subsubsection{Euler characteristics}
\label{sec:Euler_characteristics}

Now let \(\xi\in\K^*_G(X)\) and consider the correspondence
with \(M=X\), \(b=f=\id\), and the above class~\(\xi\).  We
want to compute the Lefschetz index of the geometric
correspondence~\(\alpha\)
associated to~\(\xi\).  In particular, for \(\xi=1\) we get the
Lefschetz index of the identity element in \(\GKK^G_0(X,X)\),
which is the Euler characteristic of~\(X\).

We only compute the Lefschetz index of \(\xi\in\K^*_G(X)\)
for~\(X\) with trivial boundary.  Then the map~\(\rho\) in
Theorem~\ref{the:geo_trace_transversal} is the identity map,
and~\(\id_X\) intersects itself smoothly.  The intersection
space is \(Q=X\), embedded diagonally into~\(X\times X\).  The
excess intersection bundle~\(\eta\) is~\(\Tang X\).  To apply
Theorem~\ref{the:geo_trace_transversal}, we also assume
that~\(X\) is \(\K_G\)\nb-oriented.  Then \(\Lef(\alpha)\) is
represented by the geometric correspondence
\[
X \xleftarrow{\id_X} (X,\xi \otimes e(\Tang X)) \rightarrow \pt.
\]
Here~\(e(\Tang X)\) and the map \(X\to\pt\) both use the same
\(\K_G\)\nb-orientation on~\(X\).  The Lefschetz index
of~\(\alpha\) is represented by
\[
\pt \leftarrow (X,\xi \otimes e(\Tang X)) \rightarrow \pt.
\]
By Theorem~\ref{the:geo_trace_transversal}, this is the index
of the Dirac operator of~\(X\) with coefficients in
\(\xi\otimes e(\Tang X)\).

Twisting the Dirac operator by~\(e(\Tang X)\) gives the de Rham
operator: this is the operator \(d+d^*\) on differential forms
with usual \(\Z/2\)\nb-grading, so that its index is the Euler
characteristic of~\(X\).  Thus (the analytic version of)
\(\Lef(\alpha)\) is the class in \(\KK^G_0(\Cont(X),\C)\) of
the de Rham operator with coefficients in~\(\xi\).  This was
proved already in~\cite{Emerson-Meyer:Euler} by computations in
Kasparov's analytic \(\KK\)-theory.  Now we have a purely
geometric proof of this fact, at least if~\(X\) is
\(\K_G\)\nb-oriented.

Theorem~\ref{the:geo_trace_transversal} no longer works for~\(X\)
without \(\K_G\)\nb-orientation because there is
no \(\K_G\)\nb-orientation on the excess intersection bundle.
A way around this restriction would be to use twisted
\(\K\)\nb-theory throughout.  We shall not pursue this here,
however.

We can now clarify the relationship between the Euler class
\(e(\Tang X)\in\K^{\dim(X)}_G(X)\) and the higher Euler
characteristic \(\Eul_X\in\KK^G_0(\Cont(X),\C)\) introduced
already in~\cite{Emerson-Meyer:Euler}.  Since we assume~\(X\)
\(\K_G\)\nb-oriented and without boundary, there is a duality
isomorphism \(\K^{\dim(X)}_G(X)\cong\K_0^G(X) =
\KK^G_0(\Cont(X),\C)\).  This duality isomorphism
maps~\(e(\Tang X)\) to~\(\Eul_X\).

\subsubsection{Self-maps without transversality}
\label{sec:self-map_not_transverse}

Let~\(X\) be a compact \(G\)\nb-manifold and let \(b\colon X\to
X\) be a smooth \(G\)\nb-map.  We want to compute the Lefschetz
map on the geometric correspondence
\[
X\xleftarrow{b} X \xrightarrow{\id_X} X
\]
with \(\K_G\)\nb-theory class~\(1\) on~\(X\).

If~\(b\) is transverse to the identity map, then this is done
already in Section~\ref{sec:self-map_transverse}.  The case
\(b=\id_X\) is done already in
Section~\ref{sec:Euler_characteristics}.  Now we assume that
\(b\) and~\(\id_X\) intersect smoothly.  We also assume
that~\(b\) has no fixed points on the boundary; then we may
choose the collar neighbourhood of~\(\partial X\) to contain no
fixed points of~\(b\), so that \(\rho(x)=x\) in a neighbourhood
of the fixed point subset of~\(b\).  Furthermore, all fixed
points of~\(\rho b\) are already fixed points of~\(b\).

That \(b\) and~\(\id_X\) intersect smoothly and away
from~\(\partial X\) means that
\[
Q\defeq \{x\in X\mid b(x)=x\} = \{x\in X\mid \rho b(x)=x\}
\]
is a smooth submanifold of~\(\intr X\) and that there is an
exact sequence of \(G\)\nb-vector bundles over~\(Q\):
\[
0 \to \Tang Q \to \Tang X|_Q \xrightarrow{1-D(\rho b)}
\Tang X|_Q \to \eta \to 0,
\]
where~\(\eta\) is the excess intersection bundle.

\begin{remark}
  The maps \(b\) and~\(\id_X\) always intersect smoothly if
  \(b\colon X \to X\) is isometric with respect to a Riemannian
  metric on~\(X\); the reason is that if~\(Db\) fixes a vector
  \((x,\xi)\) at a fixed point of~\(b\), then~\(b\) fixes the
  entire geodesic through~\(x\) in direction~\(\xi\).
\end{remark}

The vector bundles \(\Tang Q\) and~\(\eta\) are the kernel and
cokernel of the vector bundle endomorphism \(1-D(\rho b)\)
on~\(\Tang X|_Q\).  Since both are vector bundles, \(1-D(\rho
b)\) has locally constant rank.  We may split
\begin{align*}
  \Tang X|_Q &\cong
  \ker(\id-D(\rho b)) \oplus \operatorname{im}(\id-D(\rho b))
  = \Tang Q \oplus \operatorname{im}(\id-D(\rho b)),\\
  \Tang X|_Q &\cong
  \coker(\id-D(\rho b)) \oplus \operatorname{coim}(\id-D(\rho b))
  = \eta \oplus \operatorname{coim}(\id-D(\rho b)).
\end{align*}
Since \(\operatorname{im}(\varphi) \cong
\operatorname{coim}(\varphi)\) for any vector bundle
homomorphism, it follows that \(\eta\) and~\(\Tang Q\) are
stably isomorphic as \(G\)\nb-vector bundles.  Thus
\(\K_G\)\nb-orientations on one of them translate to
\(\K_G\)\nb-orientations on the other.

\begin{remark}
  \label{rem:stably_isomorphic_as_ker_coker}
  Given two stably isomorphic vector bundles, there is always a
  vector bundle endomorphism with these two as kernel and
  cokernel.  Hence we cannot expect \(\eta\) and~\(\Tang Q\) to
  be isomorphic.
\end{remark}


\begin{corollary}
  Let~\(X\) be a compact \(G\)\nb-manifold.  Let \(b\colon X
  \to X\) be a smooth \(G\)\nb-map without fixed points
  on~\(\partial X\), such that \(b\) and~\(\id_X\) intersect
  smoothly.  Let the fixed point submanifold~\(Q\) of~\(b\) be
  \(\K_G\)\nb-oriented, and equip the excess intersection
  bundle with the induced \(\K_G\)\nb-orientation.  Then the
  Lefschetz index of the geometric correspondence
  \[
  X\xleftarrow{b} X \xrightarrow{\id_X} X
  \]
  with \(\K_G\)\nb-theory class~\(1\) on~\(X\) is the index of
  the Dirac operator on~\(Q\) twisted by~\(e(\eta)\).
\end{corollary}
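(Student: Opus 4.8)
The plan is to obtain the corollary as the special case of Theorem~\ref{the:geo_trace_transversal} in which \(M = X\), the forward map~\(f\) is \(\id_X\) with its standard \(\K_G\)\nb-orientation, and the \(\K\)\nb-theory datum is \(\xi = 1\).  First I would check that the hypotheses of Theorem~\ref{the:geo_trace_transversal} are satisfied here.  The discussion preceding this corollary already records that, after shrinking the collar so that \(\rho b = b\) near the fixed-point set and \(\rho b\) has no fixed points on the collar, the map \((\rho b, \id_X)\colon X\to X\times X\) and the diagonal embedding intersect smoothly, with coincidence manifold \(Q_{\rho b,\id} = \{x\in X\mid b(x) = x\} = Q\) and excess intersection bundle~\(\eta\).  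That same discussion exhibits a stable isomorphism \(\Tang Q \cong \eta\), obtained by splitting \(\Tang X|_Q\) along the kernel--image and cokernel--coimage decompositions of \(\id - D(\rho b)\); transporting the given \(\K_G\)\nb-orientation of~\(Q\) along this stable isomorphism is precisely what is meant by ``the induced \(\K_G\)\nb-orientation'' on~\(\eta\) in the statement.  So all the data required by Theorem~\ref{the:geo_trace_transversal} are in place.

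Applying that theorem, the Lefschetz index of the correspondence is the index of the Dirac operator on~\(Q\) twisted by \(\xi|_Q\otimes e(\eta) = e(\eta)\), where~\(Q\) carries the canonical \(\K_G\)\nb-orientation described after the proof of Theorem~\ref{the:geo_trace_transversal}, namely the one determined by the exact sequence~\eqref{eq:Tang_Q_K-orient}.  In the present case that sequence specialises, after cancelling the injective summand \(\Tang(\Nor\intr X)\) as in the general discussion, to
\[
0 \to \Tang Q \to (\Tang X\oplus \Nor X)|_Q
\xrightarrow{(\id - D(\rho b),\,\id)} (\Tang X\oplus \Nor X)|_Q \to \eta \to 0 ,
\]
the two middle bundles carrying the even-dimensional \(\K_G\)\nb-orientation coming from \(\Tang X\oplus\Nor X\cong X\times E\).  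The one remaining point is that the \(\K_G\)\nb-orientation this sequence induces on~\(\Tang Q\) from the orientations on \((\Tang X\oplus\Nor X)|_Q\) and on~\(\eta\) agrees with the \(\K_G\)\nb-orientation on~\(Q\) we started with.  But this is exactly the compatibility of the stable isomorphism \(\Tang Q\cong\eta\) with the four-term sequence: the splitting realising that isomorphism is the one read off from this very sequence, and the even-dimensional trivial summand~\(\Nor X\) introduces no sign.  Hence the canonical orientation of Theorem~\ref{the:geo_trace_transversal} is the given one, and the index it computes is the asserted index of the Dirac operator on~\((Q, e(\eta))\).

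I expect the only genuine work to be this last bookkeeping with \(\K_G\)\nb-orientations: verifying that ``transport along the stable isomorphism \(\Tang Q\cong\eta\)'' and ``solve the exact sequence~\eqref{eq:Tang_Q_K-orient} for the orientation on~\(\Tang Q\)'' are mutually inverse, and that no spurious sign enters from reordering the even-dimensional stabilising bundle~\(\Nor X\).  Everything else is direct substitution into Theorem~\ref{the:geo_trace_transversal}.  The self-consistency remark at the end of Section~\ref{sec:geom_trace} --- that changing the \(\K_G\)\nb-orientation on~\(\eta\) changes both \(e(\eta)\) and the \(\K_G\)\nb-orientation on~\(\Tang Q\) by cancelling amounts --- is essentially this compatibility already in hand, to be invoked in the direction needed here.
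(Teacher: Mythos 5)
Your proposal is correct and takes essentially the same route as the paper: the corollary is just Theorem~\ref{the:geo_trace_transversal} specialised to \(M=X\), \(f=\id_X\), \(\xi=1\), with the collar argument and the stable isomorphism \(\Tang Q\cong\eta\) from the preceding discussion supplying the smooth intersection and the \(\K_G\)\nb-oriented excess bundle. The orientation bookkeeping you do via~\eqref{eq:Tang_Q_K-orient} is precisely the compatibility the paper invokes in its remark that changing the orientation on~\(\eta\) changes \(e(\eta)\) and the orientation on~\(\Tang Q\) in cancelling ways, so nothing is missing.
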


The Lefschetz map sends the correspondence above to
\[
X\xleftarrow{bj} Q \to \pt
\]
with \(\K\)\nb-theory class~\(e(\eta)\) on~\(Q\).

\subsubsection{Trace computation for standard correspondences}
\label{sec:trace_standard_correspondence}

By Corollary~\ref{cor:KK_by_duality}, any element of
\(\GKK^G_*(X,X)\) is represented by a correspondence of the
form
\[
X\xleftarrow{\iota\circ\pi\circ\pr_1} \Nor\intr X \times X
\xrightarrow{\pr_2} X
\]
for a unique \(\xi\in\K^*_G(\Nor\intr X\times X)\).  We may
view this as a standard form for an element in
\(\GKK^G_*(X,X)\).

The map \((\rho\circ\iota\circ\pi\circ\pr_1,\pr_2)=
(\rho\circ\pi)\times\id\colon \Nor\intr X\times X\to X\times
X\) is a submersion and hence transverse to the diagonal.  Thus
Theorem~\ref{the:geo_trace_transversal} applies.  The
space~\(Q_{\rho b,f}\) is the graph of \(\rho\pi\colon
\Nor\intr X\to X\).  Thus the Lefschetz map gives the geometric
correspondence
\[
X \leftarrow \Nor\intr X \rightarrow \pt,\qquad
\xi|_{\Nor\intr X}\in\K^*_G(\Nor\intr X),
\]
where we embed \(\Nor\intr X\to \Nor\intr X\times X\)
via~\((\id,\rho\pi)\) and use the canonical
\(\K_G\)\nb-orientation on~\(\Nor\intr X\).  The Lefschetz
index in \(\GKK^G_*(\pt,\pt) \cong \K^*_G(\pt)\) is computed
analytically as the \(G\)\nb-equivariant index of the Dirac
operator on~\(\Nor\intr X\) twisted by~\(\xi|_{\Nor\intr X}\).

\subsubsection{Trace computation for another standard form}
\label{sec:trace_another_standard_correspondence}

Assume now that~\(X\) has no boundary and is \(\K_G\)\nb-oriented.  As
we remarked at the end of Section~\ref{sec:duality}, any element of
\(\GKK^G_*(X,X)\) is represented by a correspondence
\[
X\xleftarrow{\pr_1} X \times X
\xrightarrow{\pr_2} X,\qquad
\xi\in\K^*_G(X\times X).
\]
The same computation as in
Section~\ref{sec:trace_standard_correspondence} shows that the
Lefschetz map sends this to
\[
X = X \rightarrow \pt,\qquad
\xi|_X\in\K^*_G(X),
\]
where~\(\xi|_X\) is for the diagonal embedding \(X\to X\times
X\).  Analytically, this is the \(\K_G\)\nb-homology class of
the Dirac operator on~\(X\) with coefficients~\(\xi|_X\).

\subsubsection{Homogeneous correspondences}
\label{sec:trace_homogeneous}

We call a self-correspondence \(X\xleftarrow{b}
M\xrightarrow{f} X\) \emph{homogeneous} if \(X\) and~\(M\) are
homogeneous \(G\)\nb-spaces.  That is, \(X\defeq G/H\) and
\(M\defeq G/L\) for closed subgroups \(H, L\subseteq G\).  Then
there are elements \(t_b,t_f\in G\) with \(b(gL)\defeq gt_bH\),
\(f(gL) \defeq gt_fH\); we need \(L\subseteq t_bHt_b^{-1}\cap
t_fHt_f^{-1}\) for this to be well-defined.  Since \(G/L\cong
G/t_f^{-1}Lt_f\) by \(gL \mapsto gLt_f\), any homogeneous
correspondence is isomorphic to one with \(t_f=1\), so that
\(L\subseteq H\).  We assume this from now on and abbreviate
\(t=t_b\).

Since \(M\) and~\(X\) are compact, the relevant
\(\K\)\nb-theory group \(\RK^*_{G,X}(M)\) for a homogeneous
correspondence is just \(\K^*_G(M)\).  The induction
isomorphism gives \(\RK^*_{G,X}(M) = \K^*_G(G/L) \cong
\K^*_L(\pt)\).

A \(\K_G\)\nb-orientation for \(f\colon G/L\to G/H\) is
equivalent to a \(\K^H\)\nb-orientation for the projection map
\(H/L\to \pt\) because~\(f\) is obtained from this \(H\)\nb-map
by induction.  Thus we must assume an \(\K^H\)\nb-orientation
on~\(H/L\).  Equivalently, the representation of~\(L\) on
\(\Tang_{1L}(H/L)\) factors through Spin\(^\textup c\).  This
tangent space is the quotient~\(\mathfrak{h/l}\), where
\(\mathfrak{h}\) and~\(\mathfrak{l}\) denote the Lie algebras
of \(H\) and~\(L\), respectively.

Let \(L'\defeq H\cap tHt^{-1}\).  Then \(L\subseteq L'\) and
both maps \(f,b\colon G/L\to G/H\) factor through the quotient
map \(p\colon G/L\to G/L'\).  The geometric correspondence
\[
G/H \xleftarrow{b} G/L \xrightarrow{f} G/H,\qquad
\xi\in\K^*_G(G/L)
\]
is equivalent to the geometric correspondence
\[
G/H \xleftarrow{b'} G/L' \xrightarrow{f'} G/H,\qquad
\xi'\in\K^*_G(G/L')
\]
with \(\xi'\defeq p_!(\xi)\) and \(b'(gL') = gtH\) and
\(f'(gL') = gH\).  (To construct the equivalence, we first need
a normally non-singular map lifting~\(p\); then we apply
vector bundle modifications on the domain and target of~\(p\)
to replace~\(p\) by an open embedding; finally, for an open
embedding we may construct a bordism as in
\cite{Emerson-Meyer:Correspondences}*{Example 2.14}.)

Thus we may further normalise a homogeneous geometric
self-correspondence to one with \(L=H \cap tHt^{-1}\).

Now we compute the Lefschetz map for a such a normalised homogeneous
self-correspondence.

First let \(t\notin H\).  Then the image of the map
\((f,b)\colon G/L\to G/H\times G/H\) does not intersect the
diagonal.  Hence \((f,b)\) is transverse to the diagonal and
the coincidence space~\(Q_{b,f}\) is empty.  Thus the
Lefschetz map vanishes on a homogeneous correspondence with \(t\notin
H\) by Theorem~\ref{the:geo_trace_transversal}.

Now let \(t\in H\).  Then \(b=f\colon G/L\to G/H\) is the
canonical projection map.  Our normalisation condition yields
\(L=H\) and \(b=f=\id\) in this case; that is, our geometric
correspondence is the class in \(\GKK^G_*(G/H,G/H)\) of some
\(\xi\in\K^*_G(G/H)\).  Thus we have a special case of the Euler
characteristic computation in
Section~\ref{sec:Euler_characteristics}.  The Lefschetz map gives the
class of the geometric correspondence
\[
G/H \xleftarrow[=]{\id} (G/H,e(\Tang G/H)\otimes \xi) \to \pt,\qquad
\]
provided~\(G/H\) is \(\K_G\)\nb-oriented.  The Lefschetz index is the
index of the de Rham operator with coefficients in~\(\xi\).

When we identify \(\K^*_G(G/H)\cong \K^*_H(\pt)\), the Lefschetz index
becomes a map
\[
\K^*_H(\pt) \to \K^*_G(\pt).
\]
In complex \(\K\)\nb-theory, this is a map \(\Rep(H)\to\Rep(G)\).
Graeme Segal studied this map
in~\cite{Segal:Representation_ring}*{Section~2}, where it was denoted
by~\(i_!\).

For instance, assume~\(G\) to be connected and let \(H=L\) be
its maximal torus.  Let \(t\in W\defeq N_GH/H\), the Weyl group
of~\(G\).  Assume that we are working with complex
\(\K\)\nb-theory, so that \(\K^*_G(G/H) \cong \K^*_H(\pt) \cong
\Rep(H)\).  The Weyl group~\(W\) acts on~\(G/H\) by right
translations; these are \(G\)\nb-equivariant maps.  Taking the
correspondences \(X\xleftarrow{w^{-1}} X=X\), this gives a
representation \(W\to\GKK^G_0(G/H,G/H)\).  We also map
\(\Rep(H)\cong \K^0_G(G/H)\to\GKK^G_0(G/H,G/H)\) using the
correspondences \(X=(X,\xi)=X\).  These representations of
\(W\) and~\(\Rep(H)\) are a covariant pair of representations
with respect to the canonical action of~\(W\) on~\(\Rep(H)\)
induced by the automorphisms \(h\mapsto whw^{-1}\) of~\(H\) for
\(w\in W\).  Hence we map
\[
\Rep(H)\rtimes W\to\GKK^G_0(G/H,G/H).
\]
The Lefschetz index \(\Rep(H)\rtimes W\to\Rep(G)\) maps
\(a\cdot t\mapsto 0\) for \(t\in W\setminus\{1\}\) and \(a\cdot
1\mapsto \ind_G \Lambda_a\), where~\(\Lambda_a\) means the de
Rham operator on~\(G/H\) twisted by~\(a\).

\subsection{Fixed points submanifolds for torus actions}
\label{sec:Weyl_CF}

As another application of our excess intersection formula, we reprove
a result that is used in a recent article by Block and
Higson~\cite{Block-Higson:Weyl} to reformulate the Weyl
Character Formula in \(\KK\)-theory.

Block and Higson also develop a more geometric framework for
equivariant \(\KK\)-theory for a compact group.  For two locally
compact \(G\)\nb-spaces \(X\) and~\(Y\), they identify
\(\KK^G_*(X,Y)\) with the group of continuous natural transformations
\(\Phi_Z\colon \K^*_G(X\times Z) \to \K^*_G(Y\times Z)\) for all
compact \(G\)\nb-spaces~\(Z\); here continuity means that
each~\(\Phi_Z\) is a \(\K^*_G(Z)\)-module homomorphism.  The Kasparov
product then becomes the composition of natural transformations.  This
reduces Kasparov's equivariant \(\KK\)-theory to equivariant
\(\K\)\nb-theory.

The theory \(\GKK^G\) does more: it contains the knowledge that all
such natural transformations come from geometric correspondences, when
geometric correspondences give the same natural transformation, and
how to compose geometric correspondences.  Thus we get a more concrete
\(\KK\)-theory.

\begin{theorem}
  \label{the:fixed_torus_equivalent}
  Let~\(T\) be a compact torus and let~\(X\) be a smooth,
  \(\K_T\)\nb-oriented \(T\)\nb-manifold with boundary.  Let~\(e(\Tang
  X)\in\K^0_T(X)\) be the Euler class of~\(X\) for the chosen
  \(\K_T\)\nb-orientation.  Let \(F\subseteq X\) be the fixed-point
  subset of the \(T\)\nb-action on~\(X\) and let \(j\colon F\to X\) be
  the inclusion map.  Then~\(F\) is again a smooth \(\K\)\nb-oriented
  manifold with boundary, with trivial \(T\)\nb-action, so that the
  inclusion map~\(j\) is \(\K_T\)\nb-oriented.  Let~\(e(\Tang
  F)\in\K^0(F)\subseteq\K^0_T(F)\) be the Euler class of~\(F\).  The
  two geometric correspondences
  \begin{gather*}
    X \xleftarrow{\id_X} (X,e(\Tang X)) \xrightarrow{\id_X} X,\\
    X \xleftarrow{j} (F,e(\Tang F)) \xrightarrow{j} X
  \end{gather*}
  represent the same element in \(\GKK^T_0(X,X)\).
\end{theorem}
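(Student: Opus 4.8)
The plan is to reduce the asserted equality of correspondences to a single identity between Euler classes in \(\K^0_T(X)\), and to prove that identity by a fixed-point localisation argument. First, some preliminaries. Since \(T\) is compact, \(F=X^T\) is a smooth \(T\)\nb-invariant submanifold with \(\partial F=F\cap\partial X\), and \(T\) acts trivially on it; its normal bundle~\(\nu\) in~\(X\) has no trivial isotypical summand (else, by the slice theorem, there would be a fixed point in a tubular neighbourhood of~\(F\) not lying on~\(F\)), so \(T\) acts by rotations on each isotypical component of~\(\nu\) in a canonical complex structure, making~\(\nu\) a \(\K_T\)\nb-oriented \(G\)\nb-vector bundle. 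Restricting the given \(\K_T\)\nb-orientation of~\(\Tang X\) along~\(j\) and using \(\Tang X|_F\cong\Tang F\oplus\nu\) equips \(\Tang F\) with a \(\K_T\)\nb-orientation; as \(T\) acts trivially on~\(F\) and on~\(\Tang F\) this is just a \(\K\)\nb-orientation of~\(F\), and~\(j\) becomes a \(\K_T\)\nb-oriented embedding, transverse to~\(\partial X\) (near the boundary the collar identifies \(F\) with \(\partial F\times[0,1)\)). Let \(j_!\colon\K^*_T(F)\to\K^*_T(X)\) be the associated Gysin map. Because the ``diagonal'' correspondence \(X\xleftarrow{\id}(X,\chi)\xrightarrow{\id}X\) is literally determined by the class \(\chi\in\K^*_T(X)\), the theorem follows from Steps~1 and~2 below.

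\emph{Step 1.} For any \(\mu\in\K^*_T(F)\) one has \([\,X\xleftarrow{j}(F,\mu)\xrightarrow{j}X\,]=[\,X\xleftarrow{\id}(X,j_!\mu)\xrightarrow{\id}X\,]\) in \(\GKK^T_*(X,X)\). Indeed, lift the forward~\(j\) to the normally non-singular map \((\nu,0,\hat\jmath)\) with \(\hat\jmath\colon\nu\hookrightarrow X\) a tubular-neighbourhood embedding, and Thom-modify along~\(\nu\): this replaces the space~\(F\) by~\(\nu\), the forward map by the open embedding~\(\hat\jmath\), the backward map by \(j\circ\pi_\nu\), and the \(\K\)\nb-theory datum by \(\tau_\nu\otimes\pi_\nu^*\mu\), exactly as in Example~\ref{exa:defect_explained}. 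Scaling the fibres of~\(\nu\) gives a proper homotopy from \(j\circ\pi_\nu\) to~\(\hat\jmath\), so by \cite{Emerson-Meyer:Correspondences}*{Lemma~2.12} both legs may be taken to be~\(\hat\jmath\); since \(\tau_\nu\otimes\pi_\nu^*\mu\), transported to \(\hat\jmath(\nu)\subseteq X\), represents \(j_!\mu\) and is supported there, \cite{Emerson-Meyer:Correspondences}*{Example~2.14} enlarges \(\hat\jmath(\nu)\) back to~\(X\). Taking \(\mu=e(\Tang F)\) rewrites the second correspondence of the theorem as \([\,X\xleftarrow{\id}(X,j_!(e(\Tang F)))\xrightarrow{\id}X\,]\), and it remains to prove
\[
  j_!\bigl(e(\Tang F)\bigr)=e(\Tang X)\qquad\text{in }\K^0_T(X).
\]

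\emph{Step 2.} A generic~\(\lambda\) in the Lie algebra of~\(T\) generates a \(T\)\nb-invariant (as \(T\) is abelian) vector field on~\(X\) vanishing exactly on~\(F\) (the \(1\)\nb-parameter subgroup generated by a generic~\(\lambda\) is dense in~\(T\)). On \(X\setminus F\) this is a nowhere-zero invariant section of~\(\Tang X\), and a nowhere-zero invariant section of a \(\K_T\)\nb-oriented bundle annihilates its Euler class (homotope the zero section, through sections, into the complement of the zero section, where the Thom class restricts to~\(0\)). Hence \(e(\Tang X)\) restricts to~\(0\) in \(\K^0_T(X\setminus F)\), so it lies in the subgroup of classes supported in~\(F\); excision to a tubular neighbourhood and the Thom isomorphism for~\(\nu\) identify this subgroup with \(\K^0_T(F)\) and its inclusion into \(\K^0_T(X)\) with~\(j_!\). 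Thus \(e(\Tang X)=j_!(\beta)\) for some \(\beta\in\K^0_T(F)\). Pulling back along~\(j\), the self-intersection formula \(j^*j_!\mu=\mu\cdot e(\nu)\) (a special case of Theorem~\ref{theorem:composition}) and multiplicativity of Euler classes for \(\Tang X|_F\cong\Tang F\oplus\nu\) give \(\beta\,e(\nu)=j^*e(\Tang X)=e(\Tang F)\,e(\nu)\) in \(\K^0_T(F)\). Finally \(e(\nu)\) is a non-zero-divisor: decomposing \(\nu=\bigoplus_{\chi\neq1}\nu_\chi\) and using the splitting principle over \(\K^0_T(F)\cong\K^0(F)\otimes_\Z\Rep(T)\), it suffices that \(e(L)=1-\chi^{-1}u\) be a non-zero-divisor for a line bundle~\(L\) with non-trivial \(T\)\nb-weight~\(\chi\) and \(u=[L^*]\in\K^0(F)\) a unit, which holds because the two terms lie in distinct degrees for any \(\Z\)\nb-grading of \(\Rep(T)\) pairing non-trivially with~\(\chi\). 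Cancelling~\(e(\nu)\) gives \(\beta=e(\Tang F)\), hence \(j_!(e(\Tang F))=e(\Tang X)\), and the theorem follows from Step~1.

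I expect the main obstacle to be Step~1: making the Thom-modification-and-bordism manipulation rigorous for manifolds with boundary (compare the parenthetical remarks in the proof of Theorem~\ref{the:compact_smooth_dualisable} about the results of \cite{Emerson-Meyer:Correspondences} extending to this setting) and carefully matching the \(\K_T\)\nb-orientations produced by the Thom modification with the given ones on~\(j\) and~\(\nu\). Step~2 is essentially the classical fixed-point localisation of the Euler class transcribed into equivariant \(\K\)\nb-theory, and should be routine.
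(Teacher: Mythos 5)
Your argument is correct in outline, but it takes a genuinely different route from the paper's. The paper stays entirely inside the correspondence calculus: it writes the first correspondence as the composition of \(X = X \xrightarrow{\zeta} \Tang X\) with \(\Tang X \xleftarrow{\zeta} X = X\) (Example~\ref{exa:defect_explained}), homotopes one zero section to the vector field generated by a generic element of the Lie algebra of~\(T\), checks that this vector field and~\(\zeta\) intersect smoothly with coincidence space~\(F\) and excess bundle canonically isomorphic to~\(\Tang F\) (using an invariant metric), and then applies the excess-intersection formula of Theorem~\ref{theorem:composition} to land directly on the second correspondence; the theorem is essentially an advertisement for that formula. You instead normalise both correspondences to diagonal ones, which converts the statement into the single identity \(j_!\bigl(e(\Tang F)\bigr) = e(\Tang X)\) in \(\K^0_T(X)\), and prove that identity by Atiyah--Segal-type localisation: the generic invariant vector field kills \(e(\Tang X)\) off~\(F\), the equivariant Gysin sequence for the closed embedding~\(j\) gives \(e(\Tang X) = j_!\beta\), and the self-intersection formula together with the fact that \(e(\nu)\) is a non-zero-divisor (all weights of~\(\nu\) are non-trivial) forces \(\beta = e(\Tang F)\); your grading/splitting-principle argument for the non-zero-divisor claim is sound. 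Both proofs hinge on the same generic one-parameter subgroup, but yours trades the geometric excess-intersection computation for extra algebraic input (Gysin sequence, splitting principle, cancellation of \(e(\nu)\)) plus the Step~1 manipulations (Thom modification, homotopy of the backward leg, the open-embedding bordism of \cite{Emerson-Meyer:Correspondences}*{Example 2.14}); those manipulations are legitimate moves in \(\GKK^T\), but, as you note, they need the same care with supports and boundaries that the paper itself treats only informally -- in particular choosing the Thom class of~\(\nu\) supported in a controlled neighbourhood of the zero section, as in the proof of Theorem~\ref{theorem:composition}, so that the homotopy of the backward map preserves \(X\)\nb-compact supports and the extension by zero defining \(j_!\mu\) makes sense. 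With those points attended to, your proof is a valid alternative; note that its key identity \(j_!\bigl(e(\Tang F)\bigr) = e(\Tang X)\) is exactly what the paper's statement yields upon applying both correspondences to \(1\in\K^0_T(X)\), so the two results carry the same information, reached from opposite ends.
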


This is a generalisation of \cite{Block-Higson:Weyl}*{Lemma 3.1}.  We allow
Spin\(^\textup{c}\)-manifolds instead of complex manifolds.  For a
Spin\(^\textup{c}\)-structure coming from a complex structure, the
Euler class is \([\Lambda^* \Tang^* X]\in\K^0_T(X)\), which appears
in~\cite{Block-Higson:Weyl}.  The following proof is a translation of the
proof in~\cite{Block-Higson:Weyl} into the category~\(\GKK^G\).

\begin{proof}
  The first geometric correspondence above, involving the Euler class
  of~\(X\), is represented by the composition of geometric
  correspondences
  \[
  X\xleftarrow[=]{\id_X} X \xrightarrow{\zeta} \Tang X
  \xleftarrow{\zeta} X \xrightarrow[=]{\id_X} X
  \]
  by Example~\ref{exa:defect_explained}; here~\(\zeta\) denotes the
  zero section, which is \(\K_T\)\nb-oriented using the given
  \(\K_T\)\nb-orientation on the \(T\)\nb-vector bundle~\(\Tang X\).

  Choose a generic element~\(\xi\) in the Lie algebra of~\(T\), that
  is, the one-parameter group~\(\exp(s\xi)\), \(s\in\R\), is dense
  in~\(T\).  Let \(\alpha_t\colon X\to X\) denote the action of \(t\in
  T\) on~\(X\).  The action of~\(T\) maps~\(\xi\) to a vector field
  \(\alpha_\xi\colon X\to\Tang X\).  There is a homotopy of geometric
  correspondences
  \[
  X\xleftarrow[=]{\id_X} X \xrightarrow{\alpha_{s\xi}} \Tang X
  \xleftarrow{\zeta} X \xrightarrow[=]{\id_X} X
  \]
  for \(s\in[0,1]\).  For \(t=0\) we get the composition above,
  involving \(e(\Tang X)\).  We claim that for \(s=1\), the two
  correspondences intersect smoothly and that the intersection product
  is the second geometric correspondence in the theorem, involving
  \(F\) and its Euler class.

  First we show that the fixed-point submanifold~\(F\) is a closed
  submanifold.  Equip~\(X\) with a \(T\)\nb-invariant Riemannian
  metric.  Let \(x\in F\), that is, \(\alpha_t(x)=x\) for all \(t\in
  T\).  Split~\(\Tang_x X\) into
  \[
  V=\{v\in \Tang_x X \mid D\alpha_t(x,v)=(x,v)\text{ for all }t\in T\}
  \]
  and its orthogonal complement~\(V^\bot\).  Since the metric is
  \(T\)\nb-invariant, \(\alpha_t(\exp(x,v))=\exp(D\alpha_t(x,v))\) for
  all \(v\in\Tang_x X\).  Since the exponential mapping restricts to a
  diffeomorphism between a neighbourhood of~\(0\) in~\(\Tang_x X\) and
  a neighbourhood of~\(x\) in~\(X\), we have \(\exp(x,v)\in F\) if
  \(v\in V\), and the converse holds for~\(v\) in a suitable
  neighbourhood of~\(0\).  Thus we get a closed submanifold chart
  for~\(F\) near~\(x\) with \(\Tang_x F=V\).  Hence~\(F\) is a closed
  submanifold with
  \[
  \Tang F = \{(x,v)\in\Tang X\mid
  D\alpha_t(x,v)=(x,v)\text{ for all }t\in T\}.
  \]

  Since~\(\xi\) is generic, \(\alpha_\xi(x)=0\) in~\(\Tang_x X\) if
  and only if \(x\in F\).  Thus~\(F\) is the coincidence space of the
  pair of maps \(\zeta,\alpha_\xi\colon X\to\Tang X\).  Let \(x\in F\)
  and let \(v_1,v_2\in\Tang_x X\) satisfy
  \(D\zeta(x,v_1)=D\alpha_\xi(x,v_2)\).  Then \(v_1=v_2\) by taking
  the horizontal components; and the vertical component of
  \(D\alpha_\xi(x,v_2)\) vanishes, which means that
  \(D\alpha_{\exp(s\xi)}(x,v_2)=(x,v_2)\) for all \(s\in\R\).  Hence
  \(v_2\in\Tang_x F\).  This proves that \(\zeta\) and~\(\alpha_\xi\)
  intersect smoothly.  The excess intersection bundle is the cokernel
  of \(D\alpha_{\exp(s\xi)}-\id\); since the action of~\(T\) is by
  isometries, \(D\alpha_{\exp(s\xi)}-\id\) is normal in each fibre, so
  that its image and kernel are orthogonal complements.  Hence the
  cokernel is canonically isomorphic to the kernel of
  \(D\alpha_{\exp(s\xi)}-\id\).  Thus the excess intersection bundle
  is canonically isomorphic to~\(\Tang F\).

  Hence Theorem~\ref{theorem:composition} gives the geometric
  correspondence \(X \xleftarrow{j} (F,e(\Tang F)) \xrightarrow{j} X\)
  as the composition, as desired.
\end{proof}


\section{The homological Lefschetz index of a Kasparov morphism}
\label{sec:trace_localisation}

The example in Section~\ref{sec:self-map_transverse} shows in what
sense the geometric Lefschetz index computations in Section~\ref{sec:traces_GKK}
generalise the local fixed-point formula for the Lefschetz index of a
self-map.  Now
we turn to generalisations of the global homological formula for the
Lefschetz index.

The classical Lefschetz fixed-point formula for a self-map
\(f\colon X\to X\) contains the (super)trace of the map on the
cohomology of~\(X\) with rational coefficients induced
by~\(f\).  We take rational coefficients in order to get vector
spaces over a field, where there is a good notion of trace for
endomorphisms.  By
the Chern character, we may as well take \(\K^*(X)\otimes\Q\) instead
of rational cohomology.  It is checked in~\cite{Emerson:Lefschetz}
that the Lefschetz index of \(f\in\KK_0(A,A)\) for a dualisable
\(\Cst\)\nb-algebra~\(A\) in the bootstrap class is equal to the
supertrace of the map on \(\K_*(A)\otimes\Q\) induced by~\(f\).

We are going to generalise this result to the equivariant
situation for a compact Lie group~\(G\).  We assume that we are
working with complex \(\Cst\)\nb-algebras, so that
\(\GKK^G_*(\pt,\pt)= \KK^G_*(\C,\C)\) vanishes in odd degrees
and is the representation ring~\(\Rep(G)\) in even degrees.
Our methods do not apply to the torsion invariants in
\(\KK^G_d(\R,\R)\) for \(d\neq0\) in the real case because we
(implicitly) tensor everything
with~\(\Q\) to simplify the Lefschetz index.

Furthermore, we work in~\(\KK^G\) instead of~\(\GKK^G\) in this
section because the category~\(\KK^G\) is triangulated,
unlike~\(\GKK^G\).  We explain in
Remark~\ref{rem:bootstrap_nice_model} why~\(\GKK^G\) is not
triangulated; the triangulated structure on~\(\KK^G\) is introduced
in~\cite{Meyer-Nest:BC_Localization}.

Let \(S\subseteq\Rep(G)\) be the set of all elements that are
not zero divisors.  This is a saturated, multiplicatively closed
subset; even more, it is the largest multiplicatively closed
subset for which the canonical map \(\Rep(G)\to S^{-1}\Rep(G)\)
to the ring of fractions is injective (see
\cite{Atiyah-Macdonald:Commutative}*{Exercise 9 on p.~44}).
The localisation \(S^{-1}\Rep(G)\) is also called the
\emph{total ring of fractions} of~\(\Rep(G)\).

Since \(\KK^G\) is symmetric monoidal with unit \(\Unit =
\C\) and \(\Rep(G)=\KK^G_0(\C,\C)\), the
category~\(\KK^G\) is \(\Rep(G)\)-linear.  Hence we may
localise it at~\(S\) as
in~\cite{Inassaridze-Kandelaki-Meyer:Finite_Torsion_KK}.  The
resulting category \(\Tri \defeq S^{-1}\KK^G\) has the same
objects as \(\KK^G\) and arrows
\[
\Tri_*(A,B) \defeq S^{-1}\KK^G_*(A,B)
= S^{-1}\Rep(G) \otimes_{\Rep(G)} \KK^G_*(A,B).
\]
The category~\(\Tri\) is \(S^{-1}\Rep(G)\)-linear.  There is an
obvious functor \(\natural\colon \KK^G\to\Tri\).

If~\(A\) is a separable \(G\)\nb-\(\Cst\)-algebra, then
\[
\Tri_*(\C,A) = S^{-1}\KK^G_*(\C,A)
\cong S^{-1}\Rep(G) \otimes_{\Rep(G)} \K^G_*(A),
\]
where we use the usual \(\Rep(G)\)-module structure on
\(\K^G_*(A) \cong \KK^G_*(\C,A)\).

There is a unique symmetric monoidal structure on~\(\Tri\) for
which~\(\natural\) is a strict symmetric monoidal functor:
simply extend the exterior tensor product on \(\KK^G\)
\(S^{-1}\Rep(G)\)-linearly.  Hence if~\(A\) is dualisable
in~\(\KK^G\), then its image in~\(\Tri\) is dualisable as
well, and
\[
\natural(\tr f) = \tr (\natural f)\qquad
\text{for all \(f\in\KK^G_*(A,A)\).}
\]
The crucial point for us is that \(\natural \tr(f) =
\tr(\natural f)\) uniquely determines \(\tr f\) because the map
\[
\Rep(G) \cong \KK^G_0(\Unit,\Unit) \xrightarrow{\natural}
\Tri_0(\Unit,\Unit) \cong S^{-1}\Rep(G)
\]
is injective.  Thus it suffices to compute Lefschetz indices in~\(\Tri\).
This may be easier because~\(\Tri\) has more isomorphisms and
thus fewer isomorphism classes of objects.  Furthermore, the
endomorphism ring of the unit
\(\Tri_*(\Unit,\Unit)=S^{-1}\Rep(G)\) has a rather simple
structure:

\begin{lemma}
  \label{lem:localised_RepG}
  The ring \(S^{-1}\Rep(G)\) is a product of finitely many
  fields.
\end{lemma}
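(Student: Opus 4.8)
The plan is to derive the statement from two ring-theoretic facts about \(\Rep(G)\): that it is reduced and that it is Noetherian. Once these are in hand, the assertion is an instance of a standard result on total rings of fractions, and the remaining work is routine commutative algebra.

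First I would observe that \(\Rep(G)\) is reduced: the character map \(\Rep(G)\to\Map(G,\C)\), sending a virtual representation to its character, is a ring homomorphism into a reduced ring, and it is injective because the characters of the pairwise non-isomorphic irreducible representations are linearly independent; hence \(\Rep(G)\) has no nonzero nilpotents. Next, \(\Rep(G)\) is Noetherian: this is a theorem of Segal~\cite{Segal:Representation_ring}, who shows that \(\Rep(G)\) is a finitely generated commutative ring (for connected \(G\) it is the ring of invariants \(\Rep(T)^W\) of the Laurent polynomial ring \(\Rep(T)\) of a maximal torus \(T\) under the finite Weyl group \(W\), and the general case reduces to a finite-index normal subgroup), hence Noetherian by the Hilbert basis theorem. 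In particular \(\Rep(G)\) has only finitely many minimal primes \(\prid_1,\dots,\prid_n\).

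To conclude, I would use that in a reduced Noetherian ring the set of zero divisors equals the union of the associated primes, and that the associated primes coincide with the minimal primes: if \(\prid\) is the annihilator of some \(x\neq0\), then \(x\) lies outside some minimal prime \(\prid_i\) (the intersection of all minimal primes is the nilradical, which is zero here), and this forces \(\prid\subseteq\prid_i\), hence \(\prid=\prid_i\). Therefore \(S=\Rep(G)\setminus(\prid_1\cup\dots\cup\prid_n)\), and a prime ideal of \(\Rep(G)\) survives in \(S^{-1}\Rep(G)\) if and only if it is disjoint from \(S\), i.e.\ contained in \(\prid_1\cup\dots\cup\prid_n\), i.e.\ by prime avoidance contained in some \(\prid_i\), i.e.\ equal to some \(\prid_i\). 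Thus \(S^{-1}\Rep(G)\) is Noetherian of Krull dimension zero, hence Artinian, and it is reduced as a localisation of a reduced ring; an Artinian reduced ring is a finite product of fields, concretely \(S^{-1}\Rep(G)\cong\prod_{i=1}^{n}\Rep(G)_{\prid_i}\), where \(\Rep(G)_{\prid_i}\) is the field of fractions of the integral domain \(\Rep(G)/\prid_i\). The only genuine input is Segal's finiteness theorem for \(\Rep(G)\); the one point that needs care is that \(S\) is the complement of the union of the \emph{minimal} primes rather than of some larger set of abstractly defined zero divisors, which is exactly where reducedness enters.
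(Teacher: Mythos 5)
Your proof is correct and takes essentially the same route as the paper: reducedness of \(\Rep(G)\) via injectivity of the character map, Noetherianity via Segal's theorem that \(\Rep(G)\) is a finitely generated commutative ring, and then the standard fact that the total ring of fractions of a reduced Noetherian ring is a finite product of fields. The only difference is that you spell out this last fact (identifying \(S\) as the complement of the union of the finitely many minimal primes and showing \(S^{-1}\Rep(G)\) is reduced Artinian, hence \(\prod_i \operatorname{Frac}(\Rep(G)/\prid_i)\)), whereas the paper simply cites it as an exercise in Matsumura.
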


\begin{proof}
  Let \(G/\Ad G\) be the space of conjugacy classes in~\(G\)
  and let \(\Cont(G/\Ad G)\) be the algebra of continuous functions
  on \(G/\Ad G\).  Taking characters provides a ring
  homomorphism \(\chi\colon \Rep(G)\to \Cont(G/\Ad G)\), which is
  well-known to be injective.  Hence \(\Rep(G)\) is
  torsion-free as an Abelian group and has no nilpotent
  elements.  Since~\(G\) is a compact Lie group, \(\Rep(G)\) is
  a finitely generated commutative ring by
  \cite{Segal:Representation_ring}*{Corollary 3.3}.  Thus
  \(\Rep(G)\) is Noetherian and reduced.  This implies that its
  total ring of fractions is a finite product of fields (see
  \cite{Matsumura:Commutative_ring}*{Exercise 6.5}).
\end{proof}

The fields in this product decomposition correspond bijectively
to minimal prime ideals in~\(\Rep(G)\).  By
\cite{Segal:Representation_ring}*{Proposition 3.7.iii}, these
correspond bijectively to cyclic subgroups of~\(G/G^0\),
where~\(G^0\) denotes the connected component of the identity
element.  In particular, \(S^{-1}\Rep(G)\) is a field if and
only if~\(G\) is connected.

\begin{example}
  \label{exa:localise_Rep_G_semisimple}
  Let~\(G\) be a connected compact Lie group.  Let~\(T\) be a
  maximal torus in~\(G\) and let~\(W\) be the Weyl group,
  \(W\defeq N_G(T)/T\).  Highest weight theory provides an
  isomorphism \(\Rep(G) \cong\Rep(T)^W\).  Here \(\Rep(T)\) is
  a ring of integral Laurent polynomials in \(r\)~variables,
  where~\(r\) is the rank of~\(T\).  Since elements
  of~\(\N_{\ge1}\) are not zero divisors in~\(\Rep(G)\), the
  total ring of fractions of~\(\Rep(G)\) is equal to the total
  ring of fractions of \(\Rep(G)\otimes\Q\).  The latter is the
  \(\Q\)\nb-algebra of \(W\)\nb-invariant elements in
  \(\Q[x_1,\dotsc,x_r,(x_1\dotsm x_r)^{-1}]\).  This is the
  algebra of polynomial functions on the algebraic
  \(\Q\)\nb-variety~\((\Q^\times)^r\), and the
  \(W\)\nb-invariants give the algebra of polynomials on the
  quotient variety~\((\Q^\times)^r/W\).  This variety is
  connected, so that the total ring of fractions
  \(S^{-1}\Rep(G)\) in this case is the field of rational
  functions on the algebraic
  \(\Q\)\nb-variety~\((\Q^\times)^r/W\).
\end{example}

Now we can define an equivariant analogue of the trace of the
map on \(\K_*(A)\otimes\Q\) induced by \(f\in\KK_0(A,A)\):

\begin{definition}
  \label{def:trace_KGf}
  Let \(S^{-1}\Rep(G) = \prod_{i=1}^n F_i\) with
  fields~\(F_i\).  A module over \(S^{-1}\Rep(G)\) is a product
  \(\prod_{i=1}^n V_i\), where each~\(V_i\) is an
  \(F_i\)\nb-vector space.  In particular, if~\(A\) is a
  \(G\)\nb-\(\Cst\)-algebra, then \(\Tri_*(\C,A) =
  S^{-1}\K^G_*(A) = \prod_{i=1}^n \K^G_{*,i}(A)\) for certain
  \(\Z/2\)\nb-graded \(F_i\)\nb-vector
  spaces~\(\K^G_{*,i}(A)\).  An endomorphism
  \(f\in\Tri_0(A,A)\) induces grading-preserving endomorphisms
  \(\K^G_{*,i}(f)\colon \K^G_{*,i}(A)\to \K^G_{*,i}(A)\).

  If the vector spaces \(\K^G_{*,i}(A)\) are all finite-dimensional,
  then the (super)trace of \(\K^G_{*,i}(f)\) is defined to be \(\tr
  \K^G_{0,i}(f) - \tr \K^G_{1,i}(f)\in F_i\), and
  \[
  \tr S^{-1}\K^G_*(f) \defeq (\tr \K^G_{*,i}(f))_{i=1}^n
  \in \prod_{i=1}^n F_i = S^{-1}\Rep(G).
  \]
\end{definition}

We will see below that dualisability for objects in appropriate
bootstrap classes already implies that \(\K^G_*(A)\) is a
finitely generated \(\Rep(G)\)-module, and then each
\(\K^G_{*,i}(A)\) must be a finite-dimensional \(F_i\)\nb-vector
space.

\begin{theorem}
  \label{the:trace_on_genUnit}
  Let~\(A\) belong to the thick subcategory of~\(\KK^G\)
  generated by~\(\C\) and let \(f\in\KK^G_0(A,A)\).
  Then~\(A\) is dualisable in \(\KK^G\), so that \(\tr f\) is
  defined, and
  \[
  \natural(\tr f) = \tr S^{-1}\K^G_*(f) \in S^{-1}\Rep(G).
  \]
\end{theorem}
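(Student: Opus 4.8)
The reduction to the localisation is essentially done in the text preceding Lemma~\ref{lem:localised_RepG}, so I would organise the proof around it. First, $A$ is dualisable: the dualisable objects of a tensor triangulated category are closed under suspensions, retracts and mapping cones, hence form a thick subcategory, and $\C=\Unit$ is dualisable; thus every object of $\thick{\C}$, in particular $A$, is dualisable, so $\tr f$ is defined. Since $\natural\colon\KK^G\to\Tri$ is strict symmetric monoidal, $\natural A\in\thick{\C}$ is dualisable in $\Tri$ and $\natural(\tr f)=\tr(\natural f)$; and the canonical map $\Rep(G)\to S^{-1}\Rep(G)$ is injective by construction of $S$. Hence it suffices to prove the identity $\tr(\natural f)=\tr S^{-1}\K^G_*(f)$ in $\Tri_0(\C,\C)=S^{-1}\Rep(G)$.

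To do this I would work with $h\defeq\Tri_*(\C,-)$, regarded as a functor from $\Tri$ into the symmetric monoidal category of $\Z/2$-graded modules over $R\defeq S^{-1}\Rep(G)=\Tri_0(\C,\C)$, whose dualisable objects are the finitely generated projective modules and on which categorical traces are supertraces. Here $h(\C)=R$ is the unit, and the exterior product makes $h$ lax symmetric monoidal via structure maps $h(X)\otimes_R h(Y)\to h(X\otimes Y)$. The key claim is that these maps are isomorphisms for $X,Y\in\thick{\C}$: for $X=\C$ this is trivial; for fixed $Y$ both $X\mapsto h(X)\otimes_R h(Y)$ and $X\mapsto h(X\otimes Y)$ send exact triangles to long exact sequences (the first because every module over the finite product of fields $R$ is flat by Lemma~\ref{lem:localised_RepG}, the second because $h$ is cohomological and $\otimes$ is exact), the structure map is a morphism of these long exact sequences, so the claim propagates along triangles by the five lemma and is visibly stable under retracts and tensor products. (One also uses that $\thick{\C}$ is closed under $\otimes$ and under $(-)^*$, since $\C\otimes\C\cong\C\cong\C^*$ and both operations are exact.)

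Consequently $h$ restricted to $\thick{\C}$ is a \emph{strong} symmetric monoidal functor with $h(\Unit)=\Unit$, and such a functor sends a duality datum to a duality datum, hence preserves dualisability and the categorical trace. Applied to $\natural A\in\thick{\C}$ this yields that $h(\natural A)=S^{-1}\K^G_*(A)$ is a dualisable, i.e.\ finitely generated projective, $\Z/2$-graded $R$-module --- so in particular each $\K^G_{*,i}(A)$ is a finite-dimensional $F_i$-vector space and $\tr S^{-1}\K^G_*(f)$ is defined --- and that $h$ carries $\tr(\natural f)\in\End_\Tri(\C)=R$ to the categorical trace of $h(\natural f)$ in $\End(R)=R$. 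Since $h$ is the identity on $\End_\Tri(\C)=R$, this reads $\tr(\natural f)=\tr_R\bigl(S^{-1}\K^G_*(f)\bigr)$. Finally, under $R=\prod_{i=1}^n F_i$ a finitely generated projective $\Z/2$-graded $R$-module is a product of pairs of finite-dimensional $F_i$-vector spaces, and the categorical trace of an endomorphism is the tuple of supertraces $\bigl(\tr\K^G_{0,i}(f)-\tr\K^G_{1,i}(f)\bigr)_{i=1}^n$, which is exactly $\tr S^{-1}\K^G_*(f)$ of Definition~\ref{def:trace_KGf}. Combining with $\natural(\tr f)=\tr(\natural f)$ proves the theorem.

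The step I expect to be the main obstacle is the Künneth-type isomorphism $h(X)\otimes_R h(Y)\congto h(X\otimes Y)$ for $X,Y\in\thick{\C}$: this is precisely where one genuinely uses that $S^{-1}\Rep(G)$ is a finite product of fields (Lemma~\ref{lem:localised_RepG}), both to make tensoring over $R$ exact during the dévissage and to carry out the final trace computation, and it requires care with the Koszul signs relating the categorical suspension on $\Tri$ with the degree shift on $\Z/2$-graded modules when chasing the long exact sequences. The remaining inputs --- that dualisable objects form a thick subcategory, that strong symmetric monoidal functors preserve duals and traces, and the trace formula over a product of fields --- are formal.
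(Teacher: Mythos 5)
Your proof is correct, but it takes a genuinely different route from the paper's. The paper first reduces to localisations at the individual minimal primes, so that the endomorphism ring of the unit becomes a field, and then invokes the structure result of Proposition~\ref{pro:KKG_localisation_loc_cyc}: in such a localisation every object of the thick subcategory generated by~\(\C\) becomes isomorphic to a finite direct sum of suspensions of~\(\C\). Writing \(A\cong\bigoplus \C[\varepsilon_i]\), the morphism \(f\) becomes a matrix \((f_{ij})\), and the trace is computed directly from the explicit unit and counit of the duality \(\C[\varepsilon_i]\leftrightarrows\C[\varepsilon_i]\otimes\C[\varepsilon_i]\) (with the Koszul sign \((-1)^{\varepsilon_i}\)), giving the supertrace on \(S^{-1}\K^G_*(A)\). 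You instead avoid decomposing \(A\): you prove a K\"unneth-type statement that the lax monoidal structure maps of \(h=\Tri_*(\C,\blank)\) are isomorphisms on \(\thick{\C}\), using semisimplicity of \(S^{-1}\Rep(G)\) (Lemma~\ref{lem:localised_RepG}) to make the d\'evissage work, and then quote that a strong symmetric monoidal functor preserves duals and traces. The two arguments rest on the same linear-algebraic input (the localised endomorphism ring of \(\Unit\) is semisimple), but yours is more formal and works over the whole product of fields at once, whereas the paper's explicit decomposition is reused elsewhere (it is exactly what Theorem~\ref{the:trace_compact_Lie_group} needs after restriction to a Cartan subgroup). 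One small caveat: your opening claim that dualisable objects of an arbitrary tensor triangulated category form a thick subcategory is cleanest to justify here as the paper does in Proposition~\ref{pro:dualisable_compact}, via the internal Hom supplied by Brown representability on \(\Boot^G\); asserted in full generality without internal Homs it needs a more careful argument, but in \(\KK^G\) this is available, so nothing breaks. Also note that your d\'evissage, as you acknowledge, requires checking compatibility of the comparison map with the connecting maps and with the identification of suspension and degree shift; this is routine but is precisely where the signs enter that the paper handles explicitly through the unit/counit of \(\C[\varepsilon_i]\).
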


Thick subcategories are defined in
\cite{Neeman:Triangulated}*{Definition 2.1.6}.  The thick
subcategory generated by~\(\C\) is, of course, the smallest
thick subcategory that contains the object~\(\C\).  We denote
the thick subcategory generated by a set~\(A\) of objects or a
single object by~\(\thick{A}\).

As we remarked above, \(\natural(\tr f)\) uniquely determines
\(\tr f\in\Rep(G)\) because the canonical embedding
\(\natural\colon \Rep(G)\to S^{-1}\Rep(G)\) is injective.

We will prove Theorem~\ref{the:trace_on_genUnit} in
Section~\ref{sec:trace_computation}.

How restrictive is the assumption that~\(X\) should belong to
the thick subcategory of~\(\KK^G\) generated by~\(\C\)?  The
answer depends on the group~\(G\).

We consider the two extreme cases: \emph{Hodgkin Lie groups}
and finite groups.

A Hodgkin Lie group is, by definition, a connected Lie group
with simply connected fundamental group; they are the groups to
which the Universal Coefficient Theorem and the K\"unneth
Theorem in~\cite{Rosenberg-Schochet:Kunneth} apply.

\begin{theorem}
  \label{the:Hodgkin_Lie_bootstrap}
  Let~\(G\) be a compact Lie group with torsion-free
  fundamental group.  Then a \(G\)\nb-\(\Cst\)-algebra~\(A\)
  belongs to the thick subcategory generated by~\(\C\) if and
  only if
  \begin{itemize}
  \item \(A\), without the \(G\)\nb-action, belongs to the
    bootstrap category in \(\KK\), and
  \item \(A\) is dualisable.
  \end{itemize}
\end{theorem}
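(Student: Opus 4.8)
The plan is to prove both implications, using that the thick subcategory $\thick{\C}\subseteq\KK^G$ is closed under retracts, triangles, suspension, and — by the symmetric monoidal structure — under tensoring with dualisable objects. Let me write $\Boot^G\defeq\thick{\C}$ for brevity of discussion.

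\textbf{The ``only if'' direction.} Suppose $A\in\Boot^G$. Dualisability is easy: $\C=\Unit$ is dualisable (it is its own dual), the class of dualisable objects in any symmetric monoidal category is closed under retracts and under the operations generating a thick subcategory (mapping cones of morphisms between dualisable objects are dualisable in a triangulated symmetric monoidal category, since the dual of a cone is a cone of duals up to a shift; suspensions of dualisable objects are dualisable), so every object of $\Boot^G$ is dualisable. For the statement that $A$, forgetting the $G$-action, lies in the non-equivariant bootstrap class $\Boot$, I would invoke the restriction (forgetful) functor $\Res^G_{\{e\}}\colon\KK^G\to\KK$. This is an exact, symmetric monoidal functor; it sends $\C$ to $\C$, hence sends $\thick{\C}\subseteq\KK^G$ into $\thick{\C}\subseteq\KK$, and the latter is precisely the classical bootstrap class $\Boot$ by the Universal Coefficient Theorem of Rosenberg--Schochet. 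So $\Res^G_{\{e\}}A\in\Boot$. (Neither implication here uses torsion-freeness of $\pi_1(G)$.)

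\textbf{The ``if'' direction.} This is where the Hodgkin hypothesis enters. Assume $A$ is dualisable in $\KK^G$ and $\Res^G_{\{e\}}A$ lies in $\Boot$. The key input is the equivariant Künneth theorem of \cite{Rosenberg-Schochet:Kunneth}, which is available precisely because $G$ has torsion-free fundamental group: for such $G$ and for $B$ in the bootstrap class for $\KK^G$ — equivalently (this is a theorem for Hodgkin $G$) for $B$ whose underlying algebra is in $\Boot$ — one has a Künneth spectral sequence computing $\K^G_*(B\otimes C)$ from $\Tor^{\Rep(G)}_*(\K^G_*(B),\K^G_*(C))$. The strategy is: since $A$ is dualisable with dual $A^*$, the functor $A^*\otimes{-}\colon\KK^G\to\KK^G$ is left adjoint to $A\otimes{-}$, and in particular $\KK^G_*(A,C)\cong\KK^G_*(\C,A^*\otimes C)=\K^G_*(A^*\otimes C)$ naturally in $C$. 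Now $A^*$ is also dualisable, and (again using that dualisable objects whose underlying algebra is in $\Boot$ form a class closed under the bootstrap operations, together with $\Res$ being monoidal so that $\Res A^*\cong(\Res A)^*\in\Boot$) one has $A^*$ in the Hodgkin bootstrap class. Therefore the Künneth theorem applies: $\K^G_*(A^*\otimes C)$ is computed from $\K^G_*(A^*)$ and $\K^G_*(C)$ over $\Rep(G)$. The remaining point is a purely algebraic/homological one: dualisability of $A$ forces $\K^G_*(A^*)$ to be a \emph{perfect} $\Rep(G)$-module (finitely generated, with a finite-length resolution by finitely generated projectives), because the counit/unit of the duality exhibit $A^*\otimes A$ as a retract-related object and the Künneth spectral sequence for $\KK^G_*(A,A)\ni\id_A$ degenerates appropriately; from perfectness one builds $A^*$, hence $A$, out of $\C$ by finitely many cones and retracts, i.e. $A\in\thick{\C}$.

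\textbf{Main obstacle.} The hard part will be the last step — deducing from dualisability of $A$ that $A$ is \emph{finitely built} from $\C$, i.e.\ lies in the \emph{thick} (not merely localising) subcategory generated by $\C$. Mere membership of $\Res A$ in $\Boot$ only puts $A$ in the \emph{localising} subcategory generated by $\C$ in $\KK^G$ (by the Hodgkin UCT); upgrading ``localising'' to ``thick'' is exactly where dualisability must be used, and it amounts to showing that $\K^G_*(A)$ is a perfect complex over $\Rep(G)$ — equivalently, that $\Rep(G)$ has finite global dimension after localising, or that the relevant $\Tor$-groups vanish in high degrees. Here one uses that $\Rep(G)$ is Noetherian and regular enough (for $G$ connected, $\Rep(G)\cong\Rep(T)^W$ is a polynomial-type ring, of finite global dimension), so that finitely generated modules have finite projective resolutions; dualisability then pins down finite generation via the identity $\id_A\in\KK^G_0(A,A)$ factoring through $\C$ after tensoring. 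I would spell this out by choosing a finite projective $\Rep(G)$-resolution of $\K^G_*(A)$, realising it in $\KK^G$ by a finite tower of cones on copies of $\C$ (possible since $\K^G_*(\C)=\Rep(G)$ is the free module on one generator), and matching it to $A$ via the UCT comparison map, which is an isomorphism by the Hodgkin hypothesis.
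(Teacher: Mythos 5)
Your ``only if'' half is essentially fine: restriction is a triangulated symmetric monoidal functor sending \(\C\) to \(\C\), hence maps \(\thick{\C}\subseteq\KK^G\) into the non-equivariant bootstrap class, and every object of \(\thick{\C}\) is dualisable (though the closure of dualisable objects under cones deserves a word of justification, e.g.\ via the internal Hom supplied by Brown representability, as in the proof of Proposition~\ref{pro:dualisable_compact}; neither direction of this half uses the Hodgkin hypothesis, as you note). The ``if'' half, however, has a genuine gap precisely at the step you yourself flag as the main obstacle: you never prove that dualisability of \(A\) makes \(\K^G_*(A)\) (or \(\K^G_*(A^*)\)) a finitely generated, let alone perfect, \(\Rep(G)\)-module. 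Saying that the K\"unneth spectral sequence for \(\KK^G_*(A,A)\ni\id_A\) ``degenerates appropriately'' is not an argument; since \(\Rep(G)\) has global dimension larger than one in general, extracting finite generation from the position of the coevaluation class in the K\"unneth filtration requires real work that is nowhere carried out. The final realization step is also understated: with a projective resolution of length \(\ge 2\) one cannot simply build ``a finite tower of cones on copies of \(\C\)'' and match it to \(A\) by a single UCT comparison map; one needs the phantom-tower argument, which is exactly what Theorem~\ref{the:HS_trace} of the paper does. So your route could in principle be completed (finite generation plus regularity of \(\Rep(G)\) plus the phantom tower), but as written it is not a proof.

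For comparison, the paper's proof of the ``if'' direction avoids all finiteness questions about \(\K^G_*(A)\): (1) by the main theorem of \cite{Meyer-Nest:BC_Coactions} (special to Hodgkin Lie groups), the localising\(_{\aleph_1}\) subcategory \(\langle\C\rangle_\loc\) coincides with \(\Boot^G\), which places \(A\) in \(\langle\C\rangle_\loc\); (2) a dualisable object is compact\(_{\aleph_1}\), simply because \(\KK^G(A,\blank)\cong\K^G_*(A^*\otimes\blank)\) and both \(\K^G_*\) and \(A^*\otimes\blank\) commute with countable coproducts; (3) Neeman's characterisation of compact objects in a compactly generated category, in the \(\aleph_1\) form of \cite{dellAmbrogio:Tensor_triangular}*{Corollary 2.4}, identifies the compact\(_{\aleph_1}\) objects of \(\langle\C\rangle_\loc\) with \(\thick{\C}\). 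Neither finite generation of \(\K^G_*(A)\) over \(\Rep(G)\) nor regularity of \(\Rep(G)\) enters. If you want to salvage your write-up, either replace the perfectness step by this compactness argument, or supply an honest proof of finite generation and then invoke the machinery of Theorem~\ref{the:HS_trace}.
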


We postpone the proof of this theorem until after the proof of
Proposition~\ref{pro:dualisable_compact}, which generalises part of
this theorem to arbitrary compact Lie groups.

The first condition in Theorem~\ref{the:Hodgkin_Lie_bootstrap}
is automatic for commutative \(\Cst\)\nb-algebras because the
non-equivariant bootstrap category is the class of all
separable \(\Cst\)\nb-algebras that are \(\KK\)-equivalent to a
commutative separable \(\Cst\)\nb-algebra.  Hence
Theorem~\ref{the:Hodgkin_Lie_bootstrap} verifies the
assumptions needed for Theorem~\ref{the:trace_on_genUnit} if
\(A=\Cont_0(X)\) and~\(\Cont_0(X)\) is dualisable in \(\KK^G\);
the latter is necessary for the Lefschetz index to be defined, anyway.

In particular, let~\(X\) be a compact smooth \(G\)\nb-manifold
with boundary, for a Hodgkin Lie group~\(G\).  Then~\(X\) is
dualisable in \(\GKK^G\) by
Theorem~\ref{the:compact_smooth_dualisable}, and hence
\(\Cont(X)\) is dualisable in \(\KK^G\) because the functor
\(\GKK^G\to\KK^G\) is symmetric monoidal.  Furthermore,
\(\GKK^G_*(X,X) \cong \KK^G_*(\Cont(X),\Cont(X))\) in this
case, so that any endomorphism
\(f\in\KK^G_0(\Cont(X),\Cont(X))\) comes from some
self-correspondence in \(\GKK^G_0(X,X)\).  We get the following
generalisation of the Lefschetz fixed-point formula:

\begin{corollary}
  \label{cor:Lefschetz_Hodgkin_Lie_group}
  Let~\(G\) be a Hodgkin Lie group, \(X\) a smooth
  compact \(G\)\nb-manifold, possibly with boundary, and
  \(f\in\GKK^G_0(X,X)\).  Then \(\tr(f)\in \Rep(G)\subseteq
  S^{-1}\Rep(G)\) is equal to the supertrace
  of~\(S^{-1}\K^*_G(f)\), acting on the
  \(S^{-1}\Rep(G)\)-vector space \(S^{-1}\K^*_G(X)\).
\end{corollary}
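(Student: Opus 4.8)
The plan is to deduce the corollary by assembling Theorem~\ref{the:trace_on_genUnit}, Theorem~\ref{the:Hodgkin_Lie_bootstrap}, and the duality statements recalled in the discussion just above it. First I would verify that $\Cont(X)$ satisfies the hypothesis of Theorem~\ref{the:trace_on_genUnit}, namely that it lies in the thick subcategory $\thick{\C}$ of $\KK^G$. Since $X$ is a smooth compact $G$-manifold, possibly with boundary, it has finite orbit type, so Theorem~\ref{the:compact_smooth_dualisable} makes it dualisable in $\GKK^G$; applying the symmetric monoidal functor $\GKK^G\to\KK^G$ shows that $\Cont(X)$ is dualisable in $\KK^G$. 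As $\Cont(X)$ is commutative, it automatically belongs to the non-equivariant bootstrap category. Since $G$ is a Hodgkin Lie group, hence a compact Lie group with torsion-free fundamental group, Theorem~\ref{the:Hodgkin_Lie_bootstrap} applies and yields $\Cont(X)\in\thick{\C}$ in $\KK^G$.

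Next I would transport $f$ to $\KK^G$ and compare traces. Because $X$ has finite orbit type it is normally non-singular, so, as recalled just before the corollary, the symmetric monoidal functor $\GKK^G\to\KK^G$ induces an isomorphism $\GKK^G_0(X,X)\cong\KK^G_0(\Cont(X),\Cont(X))$; write $\hat f$ for the image of $f$. A symmetric monoidal functor sends the unit and counit of a duality to the unit and counit of a duality, and the categorical trace of an endomorphism of a dualisable object does not depend on the chosen duality data, so $\tr(f)$, computed in $\GKK^G$, is carried to $\tr(\hat f)$; under the identifications $\GKK^G_0(\pt,\pt)=\KK^G_0(\C,\C)=\Rep(G)$ this reads $\tr(f)=\tr(\hat f)$. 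Now Theorem~\ref{the:trace_on_genUnit}, applied to $A=\Cont(X)$ and $\hat f$, gives $\natural(\tr\hat f)=\tr S^{-1}\K^G_*(\hat f)$ in $S^{-1}\Rep(G)$. Since $S$ is the set of non-zero-divisors, $\natural\colon\Rep(G)\to S^{-1}\Rep(G)$ is injective, so this identity pins down $\tr\hat f=\tr f$ as an element of $\Rep(G)\subseteq S^{-1}\Rep(G)$.

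Finally I would rewrite the right-hand side in the stated form: under $\K^G_*(\Cont(X))=\KK^G_*(\C,\Cont(X))=\K^*_G(X)$, the endomorphism $S^{-1}\K^G_*(\hat f)$ is exactly the $S^{-1}\Rep(G)$-linear map on $S^{-1}\K^*_G(X)$ induced by $f$, and its supertrace is the $\tr S^{-1}\K^G_*(f)$ of Definition~\ref{def:trace_KGf} (the finite-dimensionality of the relevant $F_i$-vector spaces is already part of the content of Theorem~\ref{the:trace_on_genUnit}). The proof is thus essentially bookkeeping. The only step that is not entirely formal is the compatibility of the two notions of trace across the isomorphism $\GKK^G_*(X,X)\cong\KK^G_*(\Cont(X),\Cont(X))$, but this follows immediately from the functor being symmetric monoidal; all the genuine content of the corollary resides in Theorems~\ref{the:trace_on_genUnit} and~\ref{the:Hodgkin_Lie_bootstrap}.
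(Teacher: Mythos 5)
Your proposal is correct and follows essentially the same route as the paper: dualisability of $X$ in $\GKK^G$ via Theorem~\ref{the:compact_smooth_dualisable}, transfer to $\KK^G$ by the symmetric monoidal functor (which also matches the traces under $\GKK^G_*(X,X)\cong\KK^G_*(\Cont(X),\Cont(X))$), membership of $\Cont(X)$ in $\thick{\C}$ via commutativity plus Theorem~\ref{the:Hodgkin_Lie_bootstrap}, and then Theorem~\ref{the:trace_on_genUnit} together with injectivity of $\Rep(G)\to S^{-1}\Rep(G)$. The extra bookkeeping you spell out (trace preserved by symmetric monoidal functors, identification $\K^G_*(\Cont(X))=\K^*_G(X)$) is exactly what the paper leaves implicit.
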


Notice that \(S^{-1}\Rep(G)\) for a Hodgkin Lie group is a
field, not just a product of fields.

In particular, Corollary~\ref{cor:Lefschetz_Hodgkin_Lie_group}
for the trivial group gives the Lefschetz index formula
in~\cite{Emerson:Lefschetz}.

Whereas Theorem~\ref{the:trace_on_genUnit} yields quite
satisfactory results for Hodgkin Lie groups, its scope for a
finite group~\(G\) is quite limited:

\begin{example}
  \label{exa:pt_does_not_generate_for_Ztwo}
  For \(G=\Z/2\) there is a locally compact
  \(G\)\nb-space~\(X\) with \(\K^*_G(X)=0\) but
  \(\K^*(X)\neq0\).  Equivalently, \(\KK^G_*(\C,\Cont_0(X))=0\)
  and \(\KK^G_*(\Cont(G),\Cont_0(X))\neq 0\).  This shows
  that~\(\Cont(G)\) does not belong to~\(\thick{\C}\).

  Worse, the Lefschetz index formula in
  Theorem~\ref{the:trace_on_genUnit} is false for endomorphisms
  of~\(\Cont(G)\).  We have \(\GKK^G_*(G,G) \cong \Z[G]\), spanned by
  the classes of the translation maps \(G\to G\), \(x\mapsto
  x\cdot g\) for \(g\in G\), and these are homogeneous
  correspondences as in Section~\ref{sec:trace_homogeneous}.

  Translation by \(g=1\) is the identity map, and its Lefschetz index is
  the class of the regular representation of~\(G\) in
  \(\Rep(G)\).  For \(g\neq1\), the Lefschetz index is zero because the
  fixed point subset is empty.  However,
  \(\K^*_G(G)=\K^*(\pt)=\Z[0]\) and all translation maps induce
  the identity map on \(\K^*_G(G)\).  Thus the induced map
  on~\(\K^*_G(G)\) is not enough information to compute the
  Lefschetz index of an endomorphism of~\(G\) in \(\GKK^G\).
\end{example}

\subsection{The equivariant bootstrap category}
\label{sec:equiv_bootstrap}

A reasonable Lefschetz index formula should apply at least to
\(\KK^G\)-endomorphisms of~\(\Cont(X)\) for all smooth compact
\(G\)\nb-manifolds and thus, in particular, for finite
\(G\)\nb-sets~\(X\).  Example~\ref{exa:pt_does_not_generate_for_Ztwo}
shows that Theorem~\ref{the:trace_on_genUnit} fails on such a larger
category.  This leads us to improve the Lefschetz index
formula.  First we discuss the class of
\(G\)\nb-\(\Cst\)-algebras where we expect it to hold.

We are going to describe an equivariant analogue of the bootstrap
class in~\(\KK^G\).  Our class is larger than the class of
\(\Cst\)\nb-algebras that are \(\KK^G\)-equivalent to a commutative
\(\Cst\)\nb-algebra.  The latter subcategory is too small because it
is not thick.  The thick (or localising) subcategory of~\(\KK^G\)
generated by commutative \(\Cst\)\nb-algebras is a better choice, but
such a definition is not very intrinsic.  We will choose an even
larger subcategory of~\(\KK^G\) because it is not more difficult to
treat and has a nicer characterisation.

The category~\(\KK^G\) only has countable coproducts because we need
\(\Cst\)\nb-algebras to be separable.  Hence the standard notions of
compact objects and localising subcategories have to be modified so
that they only involve countable coproducts.  As in
\cite{dellAmbrogio:Tensor_triangular}*{Definition 2.1}, we speak of
\emph{compact\(_{\aleph_1}\)} objects,\emph{
  localising\(_{\aleph_1}\)} subcategories, and
\emph{compactly\(_{\aleph_1}\)} generated subcategories.

\begin{definition}
  \label{def:elementary}
  Call a \(G\)\nb-\(\Cst\)-algebra~\(A\) \emph{elementary} if
  it is of the form \(\Ind_H^G \Mat_n\C = \Cont(G,\Mat_n\C)^H\)
  for some closed subgroup \(H\subseteq G\) and some action
  of~\(H\) on~\(\Mat_n\C\) by automorphisms; the
  superscript~\(H\) means the fixed points for the diagonal
  action of~\(H\).
\end{definition}

\begin{definition}
  \label{def:bootstrap_general}
  Let \(\Boot^G\subseteq\KK^G\) be the localising\(_{\aleph_1}\)
  subcategory generated by all elementary
  \(G\)\nb-\(\Cst\)-algebras.  We call~\(\Boot^G\) the
  \emph{\(G\)\nb-equivariant bootstrap category}.
\end{definition}

An action of~\(H\) on~\(\Mat_n\C\) comes from a projective
representation of~\(H\) on~\(\C^n\).  Such a projective
representation is a representation of an extension of~\(H\) by
the circle group.  The extension is classified by a cohomology
class in \(\textup{H}^2(H,\Un(1))\).  Two actions
on~\(\Mat_n\C\) are \(H\)\nb-equivariantly Morita equivalent if
and only if they belong to the same class in
\(\textup{H}^2(H,\Un(1))\).  The \(G\)\nb-\(\Cst\)\nb-algebras
\(\Ind_H^G \Mat_n\C\) for actions of~\(H\) on~\(\Mat_n\C\) with
different cohomology classes need not be \(\KK^G\)-equivalent.

\begin{theorem}
  \label{the:type_I_in_bootstrap}
  A \(G\)\nb-\(\Cst\)-algebra belongs to the localising\(_{\aleph_1}\)
  subcategory generated by the elementary \(G\)\nb-\(\Cst\)-algebras
  if and only if it is \(\KK^G\)-equivalent to a \(G\)\nb-action on a
  type~I \(\Cst\)\nb-algebra.
\end{theorem}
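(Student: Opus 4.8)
The plan is to prove the two inclusions by rather different means. Write $\Boot^G$ for the localising\(_{\aleph_1}\) subcategory of $\KK^G$ generated by the elementary algebras $\Ind_H^G\Mat_n\C$, and $\mathcal I$ for the class of $G$\nb-$\Cst$-algebras that are $\KK^G$-equivalent to a $G$\nb-action on a type~I $\Cst$-algebra; the assertion is that $\Boot^G=\mathcal I$. The inclusion $\Boot^G\subseteq\mathcal I$ is the formal one. Each elementary algebra $\Ind_H^G\Mat_n\C=\Cont(G,\Mat_n\C)^H$ is the section algebra of a locally trivial bundle of matrix algebras over the compact space $G/H$, hence has continuous trace and in particular is type~I, so $\mathcal I$ contains the generators of $\Boot^G$; it therefore suffices to check that $\mathcal I$ is localising\(_{\aleph_1}\). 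Separable type~I $\Cst$-algebras are closed under suspension, under countable $\Cst$-direct sums, and under extensions, so $\mathcal I$ is closed under suspension, countable coproducts, and (using that the mapping cone of a $\ast$\nb-homomorphism $\varphi\colon A\to B$ sits in the extension $0\to\Cont_0(\R)\otimes B\to\operatorname{Cone}(\varphi)\to A\to0$, together with closure of $\mathcal I$ under $\KK^G$-equivalence) under mapping cones. Closure under retracts is then automatic, since idempotents split in a triangulated category with countable coproducts and such a splitting realises a retract as a countable homotopy colimit of the ambient object (cf.\ \cite{Neeman:Triangulated}).

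For the substantial inclusion $\mathcal I\subseteq\Boot^G$ it is enough, $\Boot^G$ being closed under $\KK^G$-equivalence, to place every separable type~I $G$\nb-$\Cst$-algebra $A$ in $\Boot^G$. First I would invoke Dixmier's canonical composition series: there is an increasing family of closed ideals $(I_\alpha)$ indexed by a countable ordinal, with $I_0=0$, $I_\lambda=\overline{\bigcup_{\alpha<\lambda}I_\alpha}$ at limit ordinals, $\overline{\bigcup_\alpha I_\alpha}=A$, and $I_{\alpha+1}/I_\alpha$ equal to the largest continuous-trace ideal of $A/I_\alpha$. Naturality of this construction makes every $I_\alpha$ a $G$\nb-invariant ideal. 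Using closure of $\Boot^G$ under extensions (the two-out-of-three property for distinguished triangles) at successor ordinals, and under countable homotopy colimits at limit ordinals, one reduces to showing that an arbitrary separable $G$\nb-continuous-trace algebra $B$ lies in $\Boot^G$.

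For such a $B$, its spectrum $X=\hat B$ is a second-countable locally compact Hausdorff $G$\nb-space, and the $G$\nb-action on $X$ is proper because $G$ is compact. By the Palais slice theorem $X$ is covered by $G$\nb-invariant open sets of the form $G\times_H S$, with $H\subseteq G$ closed and $S$ an $H$\nb-slice containing an $H$\nb-fixed point. Over each such set $B$ restricts to a $G$\nb-continuous-trace algebra with spectrum $G\times_H S$, which by the structure theory of equivariant continuous-trace algebras is the induced algebra $\Ind_H^G(B_S)$, where $B_S$ is the continuous-trace $H$\nb-algebra obtained by restricting $B$ to $S$. Since $\Ind_H^G\colon\KK^H\to\KK^G$ is triangulated, preserves countable coproducts, and sends $\Mat_n\C$ with any $H$\nb-action to an elementary $G$\nb-$\Cst$-algebra, it maps $\Boot^H$ into $\Boot^G$, so it suffices to know $B_S\in\Boot^H$. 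Near the $H$\nb-fixed point of $S$, after shrinking $S$ and trivialising the Dixmier--Douady class, $B_S$ has the form $\Cont_0(S')\otimes\Comp$ with $H$ acting on the Hilbert space through a projective representation; and $\Cont_0(S')\otimes\Comp$ is an inductive limit of copies of $\Mat_n\C$ carrying the corresponding projective $H$\nb-actions, hence lies in $\Boot^H$. Assembling these local contributions over the cover of $X$ by Mayer--Vietoris and countable homotopy colimits then yields $B\in\Boot^G$, and with it the theorem.

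The hard part is this local-to-global assembly, for the same reason that the inclusion of type~I algebras in the non-equivariant bootstrap class is hard: a proper $G$\nb-space need not be a $G$\nb-CW complex --- already a second-countable locally compact Hausdorff space need not be a CW complex --- so the gluing cannot be performed cell by cell but must be done up to $\KK^G$-equivalence, by inductive-limit and Mayer--Vietoris arguments respecting the $\aleph_1$-bounds, essentially the equivariant elaboration of Rosenberg and Schochet's argument (\cite{Rosenberg-Schochet:Kunneth}) that every type~I $\Cst$-algebra lies in the bootstrap class, with the slice theorem supplying the local models $\Ind_H^G(\,\cdot\,)$ in place of Euclidean charts. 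A second delicate point is the twisting of $B$ near an orbit: the Dixmier--Douady class and the Mackey obstruction of the isotropy representation are exactly what compel the definition of \emph{elementary} to admit \emph{all} projective actions on $\Mat_n\C$, so that the reduction above genuinely lands among the chosen generators rather than only among the induced algebras with trivial action.
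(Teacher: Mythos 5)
Your first inclusion (\(\Boot^G\subseteq\mathcal I\)) follows the paper's own argument in outline: the elementary algebras are continuous-trace, hence type~I, and one checks that the class of \(G\)\nb-\(\Cst\)-algebras \(\KK^G\)-equivalent to type~I ones is localising\(_{\aleph_1}\). One caveat: your closure under cones is phrased for mapping cones of \(^*\)\nb-homomorphisms, but a morphism of \(\KK^G\) is an arbitrary class \(f\in\KK^G_0(A,B)\), not a \(^*\)\nb-homomorphism, and to get a triangulated subcategory you must exhibit, for every such \(f\), an exact triangle containing \(f\) whose third vertex is again in the class. The paper does this by identifying \(\KK^G_0(A,B)\cong\KK^G_1(A,\Cont_0(\R,B))\) and representing the class by an equivariantly semisplit extension \(\Cont_0(\R,B)\otimes\Comp\mono D\epi A\), whose middle term is type~I because type~I passes to extensions; your mapping-cone remark should be replaced or supplemented by this device. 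Closure under retracts is, as you say, automatic.

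For the substantial inclusion \(\mathcal I\subseteq\Boot^G\) the two arguments genuinely diverge, and yours has a real gap. The paper settles this direction in one sentence by citing Rosenberg--Schochet, Theorem~2.8, which is already an equivariant statement for compact Lie groups: every \(G\)\nb-action on a separable type~I \(\Cst\)\nb-algebra lies in the localising\(_{\aleph_1}\) subcategory generated by the algebras \(\Ind_H^G\Mat_n\C\). You instead sketch a re-proof of that theorem (Dixmier composition series, slices, induced algebras, Mayer--Vietoris), but the decisive step --- assembling the local models over a general second-countable proper \(G\)\nb-space, which you yourself label ``the hard part'' --- is only asserted, not carried out, so the proposal does not actually prove this direction. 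Moreover the local claim is wrong as stated: near an \(H\)\nb-fixed point of the slice \(S\) the algebra \(B_S\) has the form \(\Cont_0(S')\otimes\Comp\) with a projective \(H\)\nb-action on \(\Comp\), but \(H\) still acts nontrivially on \(S'\) away from the fixed point, so this algebra is not an inductive limit of equivariant matrix algebras and its membership in \(\Boot^H\) requires a further induction over orbit types (or the composition-series argument of Rosenberg--Schochet), not just trivialising the Dixmier--Douady class. At the successor steps of your composition series one also needs the \(G\)\nb-invariant ideals to yield semisplit extensions (type~I algebras are nuclear, so Choi--Effros plus averaging over the compact group does this), which should at least be noted. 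Either supply these missing steps, or do as the paper does and invoke Rosenberg--Schochet Theorem~2.8 directly, which makes this direction immediate.
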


\begin{proof}
  It is already shown in
  \cite{Rosenberg-Schochet:Kunneth}*{Theorem~2.8} that all
  \(G\)\nb-actions on type~I \(\Cst\)\nb-algebras belong to the
  localising\(_{\aleph_1}\) subcategory generated by the elementary
  \(G\)\nb-\(\Cst\)-algebras.  By definition,
  localising\(_{\aleph_1}\) subcategories are closed under
  \(\KK^G\)-equivalence.  Elementary \(G\)\nb-\(\Cst\)\nb-algebras are
  type~I \(\Cst\)\nb-algebras, even continuous trace
  \(\Cst\)\nb-algebras.  To finish the proof we must show that the
  \(G\)\nb-\(\Cst\)-algebras that are \(\KK^G\)-equivalent to type~I
  \(G\)\nb-\(\Cst\)-algebras form a localising\(_{\aleph_1}\)
  subcategory of \(\KK^G\).

  Let \(\Type\subseteq\KK^G\) be the full subcategory of type~I, separable
  \(G\)\nb-\(\Cst\)-algebras.  If \(A\in\Type\), then
  \(\Cont_0(\R,A)\in\Type\), so that~\(\Type\) is closed under
  suspension and desuspension.  Let \(A,B\in\Type\) and
  \(f\in\KK^G_0(A,B)\).  We have \(\KK^G_0(A,B) \cong
  \KK^G_1(A,\Cont_0(\R,B))\), and cycles for the latter group
  correspond to (equivariantly) semisplit extensions of
  \(G\)\nb-\(\Cst\)-algebras
  \[
  \Cont_0(\R,B)\otimes\Comp \mono D \epi A
  \]
  with \(\Comp \defeq \Comp(L^2 (G\times\N))\).  Since \(B\)
  and~\(A\) are type~I, so are \(\Cont_0(\R,B)\otimes\Comp\)
  and~\(D\) because the property of being type~I is inherited
  by extensions.  The semisplit
  extension above provides an exact triangle isomorphic to
  \[
  B[-1] \to D\to A\xrightarrow{f} B.
  \]
  Thus there is an
  exact triangle containing~\(f\) with all three entries
  in~\(\Type\).  Furthermore, countable direct sums of type~I
  \(\Cst\)\nb-algebras are again type~I.  This implies that the
  \(G\)\nb-\(\Cst\)-algebras \(\KK^G\)-equivalent to one
  in~\(\Type\) form a localising\(_{\aleph_1}\) subcategory of~\(\KK^G\).
\end{proof}

\begin{remark}
  \label{rem:bootstrap_nice_model}
  In the non-equivariant case, any \(\Cst\)\nb-algebra in the
  bootstrap class is \(\KK\)-equivalent to a commutative one.  This
  criterion fails already for \(G=\Un(1)\), as shown by a
  counterexample in~\cite{Emerson:Localization_circle}.  Since the
  bootstrap class is the smallest localising subcategory
  containing~\(\C\), it follows that the commutative
  \(\Cst\)\nb-algebras do not form a localising subcatgory.  Thus
  \(\GKK^G\) is not triangulated: it lacks cones for some maps.

  In this case, the equivariant bootstrap class is already
  generated by~\(\C\) and contains all \(\Un(1)\)-actions on
  \(\Cst\)\nb-algebras in the non-equivariant bootstrap
  category.  It is shown in~\cite{Emerson:Localization_circle}
  that the \(\Un(1)\)-equivariant \(\K\)\nb-theory of a
  suitable Cuntz--Krieger algebra with its natural gauge action
  cannot arise from any \(\Un(1)\)-action on a locally compact
  space.
\end{remark}

\begin{corollary}
  \label{cor:res_ind_bootstrap}
  The restriction and induction functors \(\KK^G\to\KK^H\) and
  \(\KK^H\to\KK^G\) for a closed subgroup~\(H\) in a compact Lie
  group~\(G\) restrict to functors between the bootstrap classes in
  \(\KK^G\) and \(\KK^H\).
\end{corollary}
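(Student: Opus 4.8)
The plan is to reduce both statements to the intrinsic characterisation of the bootstrap classes provided by Theorem~\ref{the:type_I_in_bootstrap}. By that theorem, \(\Boot^G\) is exactly the class of \(G\)\nb-\(\Cst\)-algebras that are \(\KK^G\)-equivalent to a type~I \(G\)\nb-\(\Cst\)-algebra, and likewise for \(\Boot^H\). Since restriction \(\Res^G_H\colon\KK^G\to\KK^H\) and induction \(\Ind_H^G\colon\KK^H\to\KK^G\) are functors, they send \(\KK^G\)- (resp.\ \(\KK^H\)-)equivalences to equivalences. Hence it suffices to show that each of them carries type~I \(\Cst\)\nb-algebras to type~I \(\Cst\)\nb-algebras.

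For restriction this is immediate: being type~I is a property of the underlying \(\Cst\)\nb-algebra and is unaffected by forgetting part of the group action, so \(\Res^G_H A\) is type~I whenever \(A\) is. For induction, recall \(\Ind_H^G A = \Cont(G,A)^H\), with \(H\) acting diagonally by right translation on~\(G\) and by the given action on~\(A\). Because \(G\) is compact, \(G/H\) is compact Hausdorff and \(\Ind_H^G A\) is naturally a \(\Cont(G/H)\)-algebra whose fibre at every point is (non-canonically) isomorphic to~\(A\). Every irreducible representation of a \(\Cont(G/H)\)-algebra factors through one of its fibres; so if each fibre is type~I, every irreducible representation of \(\Ind_H^G A\) contains the compacts, i.e.\ \(\Ind_H^G A\) is type~I. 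Thus \(\Ind_H^G\) preserves type~I algebras and therefore maps \(\Boot^H\) into \(\Boot^G\). (Alternatively, one can argue that \(\Ind_H^G\) is a triangulated functor commuting with countable direct sums, hence preserves localising\(_{\aleph_1}\) subcategories generated by a set of objects, and then use transitivity of induction, \(\Ind_H^G \Ind_K^H\Mat_n\C \cong \Ind_K^G\Mat_n\C\), to see that it sends the elementary generators of \(\Boot^H\) to elementary generators of \(\Boot^G\).)

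The only point that needs a little care is the standard fact invoked above: a \(\Cont_0(X)\)-algebra over a locally compact Hausdorff space~\(X\) whose fibres are all type~I is itself type~I (equivalently, in the alternative argument, the exactness of \(\Ind_H^G\) on equivariantly semisplit extensions, obtained by inducing completely positive sections, together with its compatibility with countable direct sums). Neither ingredient is problematic, so I expect the main content of the proof to lie entirely in correctly packaging the type~I characterisation of Theorem~\ref{the:type_I_in_bootstrap}, with the fibrewise description of \(\Ind_H^G A\) as the one genuinely non-formal step.
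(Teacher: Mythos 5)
Your proof is correct. The restriction half coincides with the paper's argument (type~I is a property of the underlying \(\Cst\)\nb-algebra, so Theorem~\ref{the:type_I_in_bootstrap} applies). For induction you take a genuinely different primary route: you again invoke Theorem~\ref{the:type_I_in_bootstrap} and verify that \(\Ind_H^G\) preserves type~I algebras via the \(\Cont(G/H)\)-algebra structure of \(\Ind_H^G A\), using the standard fact that every irreducible representation of a \(\Cont(G/H)\)-algebra factors through a fibre, so type~I fibres force the induced algebra to be type~I; this is a valid \(\Cst\)\nb-algebraic argument. The paper instead treats induction purely formally: it notes that \(\Ind_H^G\) is triangulated, commutes with (countable) direct sums, and carries elementary \(H\)\nb-\(\Cst\)-algebras to elementary \(G\)\nb-\(\Cst\)-algebras, hence maps the localising\(_{\aleph_1}\) subcategory \(\Boot^H\) into \(\Boot^G\) -- exactly the argument you sketch in your parenthetical alternative. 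The trade-off is that your main route makes the two halves of the proof uniform (both reduced to the type~I characterisation) but requires the extra representation-theoretic lemma about \(\Cont_0(X)\)-algebras with type~I fibres, whereas the paper's generator-based argument for induction avoids any \(\Cst\)\nb-algebraic input beyond the transitivity of induction on the elementary generators.
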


\begin{proof}
  Restriction does not change the underlying
  \(\Cst\)\nb-algebra and thus preserves the property of being
  type~I.  Induction maps elementary \(H\)\nb-\(\Cst\)-algebras
  to elementary \(G\)\nb-\(\Cst\)-algebras, is triangulated, and
  commutes with direct sums.  Hence it maps~\(\Boot^H\)
  to~\(\Boot^G\).
\end{proof}

\begin{proposition}
  \label{pro:dualisable_compact}
  An object of~\(\Boot^G\) is compact\(_{\aleph_1}\) if and only
  if it is dualisable, if and only if it belongs to the thick
  subcategory of~\(\Boot^G\) \textup(or of \(\KK^G\)\textup)
  generated by the elementary \(G\)\nb-\(\Cst\)\nb-algebras.
\end{proposition}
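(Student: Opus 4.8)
The plan is to prove the three properties equivalent through the cyclic chain of implications
\[
A\in\thick{\text{elementary}}\ \Longrightarrow\ A\ \text{is dualisable}\ \Longrightarrow\ A\ \text{is compact}_{\aleph_1}\ \Longrightarrow\ A\in\thick{\text{elementary}},
\]
where \(\thick{\text{elementary}}\) denotes the thick subcategory generated by the elementary \(G\)-\(\Cst\)-algebras.  Only the last implication uses the hypothesis \(A\in\Boot^G\).  Two preliminary observations keep the argument uniform: \(\thick{\text{elementary}}\) is automatically contained in~\(\Boot^G\) and is the same whether formed inside~\(\Boot^G\) or inside~\(\KK^G\) (thickness is an intrinsic closure property); and, since \(\Boot^G\) is closed under countable coproducts, all three properties will in fact force compactness\(_{\aleph_1}\) already as an object of~\(\KK^G\), so the statement is insensitive to which ambient category one uses to test compactness.

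For ``dualisable \(\Rightarrow\) compact\(_{\aleph_1}\)'' I would simply use that a dual~\(A^*\) provides a natural isomorphism \(\KK^G_*(A,-)\cong\KK^G_*(\C,A^*\otimes-)=\K^G_*(A^*\otimes-)\); since \(\otimes\) and equivariant \(\K\)-theory both commute with countable direct sums of \(\Cst\)-algebras, so does \(\KK^G_*(A,-)\), which is exactly compactness\(_{\aleph_1}\), and this needs nothing about bootstrap classes.  For ``compact\(_{\aleph_1}\) \(\Rightarrow\) \(A\in\thick{\text{elementary}}\)'' the crucial input is that each elementary algebra is itself compact\(_{\aleph_1}\) in~\(\KK^G\): the adjunction between \(\Ind_H^G\) and \(\Res^G_H\) together with Morita invariance identifies \(\KK^G_*(\Ind_H^G\Mat_n\C,-)\) with \(\KK^H_*(\Mat_n\C,\Res^G_H(-))\cong\K^H_*(\Res^G_H(-))\), and restriction and \(\K^H_*\) preserve countable direct sums.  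Thus \(\Boot^G\) is compactly\(_{\aleph_1}\) generated by a set of compact\(_{\aleph_1}\) objects, and the \(\aleph_1\)-version of Neeman's localisation theorem (see \cite{Neeman:Triangulated}, and \cite{dellAmbrogio:Tensor_triangular} for the \(\aleph_1\)-bookkeeping) identifies the compact\(_{\aleph_1}\) objects of~\(\Boot^G\) with \(\thick{\text{elementary}}\).

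For ``\(A\in\thick{\text{elementary}}\ \Rightarrow\) dualisable'' I would argue first that the dualisable objects of~\(\KK^G\) form a thick subcategory: they are closed under tensor products, shifts and retracts, and closure under cones is standard tensor-triangular formalism (a five-lemma argument applied to the description of dualisability as an isomorphism of exact functors; cf.\ \cite{Emerson-Meyer:Dualities}).  It then suffices that each elementary algebra is dualisable, which I would deduce by noting that \(\Ind_H^G\Mat_n\C\) is \(\KK^G\)-equivalent to the algebra of continuous sections of a locally trivial bundle of matrix algebras over the compact smooth \(G\)-manifold \(G/H\), and such a section algebra is dualisable by the construction of Theorem~\ref{the:compact_smooth_dualisable}, with unit and counit twisted by the matrix-algebra bundle; alternatively, \(\Mat_n\C\) is dualisable in~\(\KK^H\) (its complex conjugate, equivalently its opposite, serving as a dual) and one transports the duality along induction.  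Composing the three implications proves the proposition.

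The step I expect to be the main obstacle is this last one, the dualisability of the elementary algebras when the \(H\)-action on~\(\Mat_n\C\) does not lift to an honest representation of~\(H\): then \(\Ind_H^G\Mat_n\C\) is a genuine continuous-trace algebra with nonzero Dixmier--Douady invariant over~\(G/H\), and the duality construction of Theorem~\ref{the:compact_smooth_dualisable} must be enhanced to carry along an Azumaya bundle (or one must make the induction-of-a-dual argument fully precise using the projection formula).  The other deep ingredient, the \(\aleph_1\)-version of Neeman's theorem, is available from the foundational literature and only needs to be invoked.
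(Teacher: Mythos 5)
Your cyclic scheme is essentially the architecture of the paper's proof, but two of your steps have concrete problems.  First, the adjunction you use to make the elementary algebras compact\(_{\aleph_1}\), namely \(\KK^G_*(\Ind_H^G\Mat_n\C,B)\cong\KK^H_*(\Mat_n\C,\Res^G_H B)\), is false for a general closed subgroup \(H\) of a compact group: for \(G/H\) compact, induction is \emph{right} adjoint to restriction, and a left-adjoint-type formula only holds after a Wirthm\"uller-type twist/shift by the tangent space of \(G/H\) at the base point.  Already for \(H=\{1\}\subseteq G=\T\) one has \(\KK^\T_*(\Cont(\T),\C)\cong\Z\) concentrated in odd degree (Poincar\'e duality for the free \(\T\)\nb-manifold \(\T\)), whereas your formula would give \(\KK_*(\C,\C)=\Z\) in degree zero.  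The corrected, twisted adjunction (or dualisability, as in the paper) still yields compactness\(_{\aleph_1}\), so this is repairable, but not as stated.  Second, and more seriously, the dualisability of the elementary algebras --- which your chain needs for ``\(\thick{\text{elementary}}\Rightarrow\) dualisable'' --- is exactly the step you leave open in the twisted case, when the projective \(H\)\nb-representation does not lift and \(\Ind_H^G\Mat_n\C\) is a continuous-trace algebra with nonzero Dixmier--Douady class over \(G/H\).  Neither of your sketches closes it: enhancing Theorem~\ref{the:compact_smooth_dualisable} to carry an Azumaya bundle is not done in the paper, and ``transporting a dual along induction'' fails naively because induction is not monoidal (\(\Ind_H^G\C=\Cont(G/H)\neq\C\)); one would need the projection formula and the twisted adjunction above.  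The paper imports precisely this ingredient from \cite{Echterhoff-Emerson-Kim:Duality}*{Corollary 2.2}, so the load-bearing step of your argument is missing rather than merely technical.

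A smaller omission: the \(\aleph_1\)-versions of Neeman's theorem and Brown representability that you invoke from \cite{dellAmbrogio:Tensor_triangular} require the generating set of compact\(_{\aleph_1}\) objects to be at most countable, since \(\KK^G\) only has countable coproducts; the paper secures this with Lemma~\ref{lem:countably_many_subgroups} (countably many conjugacy classes of closed subgroups, each with finitely many projective representations), and you should verify it as well.  Apart from these points your outline does match the paper: dualisable \(\Rightarrow\) compact\(_{\aleph_1}\) via compactness of the unit, identification of the compact\(_{\aleph_1}\) objects of \(\Boot^G\) with \(\thick{\text{elementary}}\), and thickness of the class of dualisable objects, which the paper establishes by means of internal Hom functors furnished by Brown representability on \(\Boot^G\) rather than by the bare-hands five-lemma argument you assert; if you avoid internal Homs you should at least indicate how the candidate dual of a cone and its unit and counit are constructed.
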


\begin{proof}
  The tensor unit~\(\C\) is compact\(_{\aleph_1}\) because
  \(\KK^G_*(\C,A) \cong \K^G_*(A) \cong \K_*(G\ltimes A)\) is
  countable for all \(G\)\nb-\(\Cst\)\nb-algebras~\(A\), and the
  functors \(A\mapsto G\ltimes A\) and~\(\K_*\) are well-known to
  commute with coproducts.  Furthermore, the tensor product in
  \(\KK^G\) commutes with coproducts in both variables.

  Using this, we show that dualisable objects of~\(\Boot^G\) are
  compact\(_{\aleph_1}\).  If~\(A\) is dualisable with
  dual~\(A^*\), then \(\KK^G(A,B) \cong \KK^G(\C,A^*\otimes
  B)\), and since~\(\C\) is compact\(_{\aleph_1}\)
  and~\(\otimes\) commutes with countable direct sums, it
  follows that~\(A\) is compact\(_{\aleph_1}\).

  It follows from \cite{Echterhoff-Emerson-Kim:Duality}*{Corollary
    2.2} that elementary \(G\)\nb-\(\Cst\)\nb-algebras are dualisable
  and hence compact\(_{\aleph_1}\).  A compact group has only at most
  countably many compact subgroups by
  Lemma~\ref{lem:countably_many_subgroups} below; and any of them has
  at most finitely many projective representations.  Hence the set of
  elementary \(G\)\nb-\(\Cst\)\nb-algebras is at most countable.
  Therefore, \(\Boot^G\) is compactly\(_{\aleph_1}\) generated in the
  sense of \cite{dellAmbrogio:Tensor_triangular}*{Definition 2.1}.  By
  \cite{dellAmbrogio:Tensor_triangular}*{Corollary~2.4} an object
  of~\(\Boot^G\) is compact\(_{\aleph_1}\) if and only if it belongs
  to the thick subcategory generated by the elementary
  \(G\)\nb-\(\Cst\)-algebras.

  The Brown Representability Theorem
  \cite{dellAmbrogio:Tensor_triangular}*{Corollary~2.2} shows
  that for every compact\(_{\aleph_1}\) object~\(A\)
  of~\(\Boot^G\) there is a functor \(\Hom(A,\blank)\) from~$\Boot^G$
  to~$\Boot^G$ such that
  \[
  \KK^G(A\otimes B,D) \cong \KK^G(B,\Hom(A,D))
  \]
  for all \(B,D\) in~\(\Boot^G\).  Using exactness properties of
  the internal Hom functor in the first variable, we then show
  that the class of dualisable objects in~\(\Boot^G\) is thick
  (see~\cite{dellAmbrogio:Tensor_triangular}*{Section 2.3}).  Thus
  all objects of the thick subcategory generated by the
  elementary \(G\)\nb-\(\Cst\)\nb-algebras are dualisable.
\end{proof}

The following lemma is well-known, see for example 
\cite{Montgomery-Zippen:Subgroups_of_a_compact_group}. 

\begin{lemma}
  \label{lem:countably_many_subgroups}
  A compact Lie group has at most countably many conjugacy classes of
  closed subgroups.
\end{lemma}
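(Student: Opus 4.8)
The plan is to split an arbitrary closed subgroup into its identity component and a finite quotient and to treat the two pieces separately. Thus suppose we know: (i) every compact Lie group has only countably many conjugacy classes of \emph{finite} subgroups; and (ii) $G$ has only countably many conjugacy classes of \emph{closed connected} subgroups. Given a closed subgroup $H\subseteq G$, its identity component $K\defeq H^0$ is a closed connected subgroup, so by (ii) we may conjugate and assume $K$ is one of countably many fixed connected subgroups. With $K$ fixed, $H$ lies in $N_G(K)$ and $H/K$ is a finite subgroup of the compact Lie group $N_G(K)/K$, and $H$ is the preimage of $H/K$ under $N_G(K)\to N_G(K)/K$; by (i) applied to $N_G(K)/K$ there are only countably many such $H$ up to conjugacy. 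Summing over the countably many $K$ proves the lemma.

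For (i): for a finite group $\Gamma$ the set $\Hom(\Gamma,G)\subseteq G^{\Gamma}$ is cut out by closed conditions, hence compact, and Weil rigidity of finite groups — equivalently the vanishing of $H^1(\Gamma,\mathfrak g)$, which holds because $\mathfrak g$ is a rational vector space — shows that the $G$-conjugation orbits in $\Hom(\Gamma,G)$ are open, hence finite in number. Since every finite subgroup of $G$ is the image of such a homomorphism and there are only countably many isomorphism types of finite groups, (i) follows.

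For (ii), which is the crux: a closed connected subgroup $K$ is determined by its Lie algebra $\mathfrak k\subseteq\mathfrak g$, and $\mathfrak k=\mathfrak s\oplus\mathfrak a$ decomposes canonically into its semisimple part $\mathfrak s=[\mathfrak k,\mathfrak k]$ and its centre $\mathfrak a=\mathfrak z(\mathfrak k)$, a toral subalgebra. By the classical finiteness theorem for conjugacy classes of semisimple subalgebras of a compact semisimple Lie algebra (ultimately due to Dynkin), there are only finitely many possibilities for $\mathfrak s$ up to conjugacy; fix a representative $\mathfrak s_0$ and let $\mathfrak c=\mathfrak z_{\mathfrak g}(\mathfrak s_0)$. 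Any $\mathfrak k$ whose semisimple part is conjugate to $\mathfrak s_0$ is conjugate to $\mathfrak s_0\oplus\mathfrak a$ with $\mathfrak a\subseteq\mathfrak c$ abelian and with the corresponding analytic subgroup closed (automatic here, since it is then a torus factor, up to isogeny, of the compact connected group $K$). Conjugating further by an element centralizing $\mathfrak s_0$, we may assume $\mathfrak a$ lies in a fixed maximal toral subalgebra $\mathfrak t_{\mathfrak c}$ of $\mathfrak c$, where it must be a \emph{rational} subspace with respect to the integral lattice $\ker(\exp|_{\mathfrak t_{\mathfrak c}})$, since otherwise $\exp(\mathfrak a)$ would not be closed. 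A lattice in a real vector space has only countably many rational subspaces, so there are only countably many choices for $\mathfrak a$, hence for $\mathfrak k$ up to conjugacy, proving (ii).

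I expect the main obstacle to be (ii), and within it the appeal to the finiteness of conjugacy classes of semisimple subalgebras, which is a nontrivial classical result; I would either cite it or replace this step by a softer argument: the set of closed subgroups of $G$ with the Hausdorff metric is a compact, hence separable, metric space with a continuous $G$-action by conjugation, and one shows that a Hausdorff-convergent sequence of closed connected subgroups of fixed dimension whose limit has the same dimension is eventually constant up to conjugacy; stratifying by dimension then exhibits the orbit space as separable and discrete, hence countable. Either way, the real content lies in controlling how a closed connected subgroup can degenerate in a continuous family, which is also the heart of the argument in the reference cited after the statement.
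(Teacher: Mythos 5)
Your argument is essentially correct, but it takes a genuinely different route from the paper. The paper's proof is much shorter: by the Mostow Embedding Theorem, $G/H$ embeds $G$\nb-equivariantly into a finite-dimensional linear representation of~$G$, so every closed subgroup~$H$ is the stabiliser of a point in some linear representation; since~$G$ has only countably many linear representations up to isomorphism and each has finite orbit type (only finitely many conjugacy classes of stabilisers), countability follows at once. You instead split $H$ into its identity component and finite quotient and handle the two pieces by structure theory: local rigidity of homomorphisms from finite groups (via $H^1(\Gamma,\mathfrak g)=0$ -- say ``a vector space over a field of characteristic zero'' rather than ``a rational vector space'') gives countably many conjugacy classes of finite subgroups, while for connected subgroups you combine the Dynkin-type finiteness of semisimple subalgebras up to conjugacy with the observation that the central toral part must, after conjugation into a fixed maximal torus of the centraliser, be a rational subspace of the integral lattice. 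Both proofs work; yours yields finer structural information (an explicit parametrisation of the connected subgroups and finiteness statements for finite subgroups of a fixed isomorphism type) but leans on a nontrivial classical theorem that you would need to cite, whereas the paper's argument is a two-line consequence of the Mostow embedding and finite orbit type, tools it already uses elsewhere. Your proposed ``softer'' alternative via the Chabauty/Hausdorff space of closed subgroups is closer in spirit to the Montgomery--Zippin reference given after the statement, but as sketched (the claim that a convergent sequence of connected subgroups of constant dimension is eventually constant up to conjugacy, plus the stratification argument) it would need to be fleshed out before it counts as a proof.
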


\begin{proof}
  Let~\(H\) be a closed subgroup of a compact Lie group~\(G\).  By the
  Mostow Embedding Theorem, \(G/H\) embeds into a linear
  representation of~\(G\), that is, \(H\) is a stabiliser of a point
  in some linear representation of~\(G\).  Up to isomorphism, there
  are only countably many linear representations of~\(G\).  Each
  linear representation has finite orbit type, that is, it admits only
  finitely many different conjugacy classes of stabilisers.  Hence
  there are altogether at most countably many conjugacy classes of
  closed subgroups in~\(G\).
\end{proof}

\begin{proof}[Proof of Theorem~\ref{the:Hodgkin_Lie_bootstrap}]
  Let~\(G\) be a Hodgkin Lie group.  The main result
  of~\cite{Meyer-Nest:BC_Coactions} says that~\(A\) belongs to the
  localising subcategory of \(\KK^G\) generated by~\(\C\) if and only
  if \(A\rtimes G\) belongs to the non-equivariant bootstrap category (this is
  special for Hodgkin Lie groups).  Since this covers all elementary
  \(G\)\nb-\(\Cst\)-algebras, we conclude that the localising
  subcategory generated by~\(\C\) contains~\(\Boot^G\) and is, therefore,
  equal to~\(\Boot^G\).

  The same argument as in the proof of
  Proposition~\ref{pro:dualisable_compact} shows that the
  following are equivalent for an object~\(A\) of~\(\Boot^G\):
  \begin{itemize}
  \item \(A\) is dualisable;
  \item \(A\) is compact\(_{\aleph_1}\);
  \item \(A\) belongs to the thick subcategory generated by~\(\C\).
  \end{itemize}
  This finishes the proof of Theorem~\ref{the:Hodgkin_Lie_bootstrap}.
\end{proof}

So far we always used the bootstrap class, which is the domain where a
Universal Coefficient Theorem holds.  The next proposition is a side
remark showing that we may also use the domain where a K\"unneth
formula holds.

\begin{definition}
  \label{def:Kunneth_UCT}
  An object \(A\in\KK^G\) satisfies the K\"unneth formula if
  \(\K^G_*(A\otimes B)=0\) for all~\(B\) that satisfy
  \(\K^G_*(C\otimes B)=0\) for all elementary
  \(G\)\nb-\(\Cst\)\nb-algebras~\(C\).
\end{definition}

By results of \cites{Meyer:Homology_in_KK_II,
  Meyer-Nest:Homology_in_KK}, the assumption in
Definition~\ref{def:Kunneth_UCT} is necessary and sufficient for a
certain natural spectral sequence that computes \(\K^G_*(A\otimes B)\)
from \(\KK^G_*(C,A)\) and \(\KK^G_*(C,B)\) for elementary~\(C\) to
converge for all~\(B\); we have no need to describe this spectral
sequence.

\begin{proposition}
  \label{pro:Kunneth_versus_UCT}
  Let \(A\in\KK^G\) be dualisable with dual~\(A^*\).  If \(A\)
  or~\(A^*\) satisfies a K\"unneth formula, then both
  \(A\) and~\(A^*\) belong to~\(\Boot^G\), and vice versa.
\end{proposition}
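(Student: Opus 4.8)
The plan is to match the Künneth condition against orthogonality conditions for $\Boot^G$ and its complementary subcategory. Write $\mathcal{M}$ for the class of objects $B\in\KK^G$ with $\K^G_*(C\otimes B)=0$ for all elementary \(G\)\nb-\(\Cst\)-algebras $C$, and $\mathcal{N}$ for the class of $N\in\KK^G$ with $\KK^G_*(C,N)=0$ for all elementary $C$; thus $A$ satisfies the Künneth formula if and only if $\K^G_*(A\otimes B)=0$ for all $B\in\mathcal{M}$, while $\mathcal{N}$ is the right orthogonal complement of $\Boot^G$. Since $\Boot^G$ is the localising\(_{\aleph_1}\) subcategory generated by the \emph{set} of elementary \(G\)\nb-\(\Cst\)-algebras, which are compact\(_{\aleph_1}\) by Proposition~\ref{pro:dualisable_compact}, the localisation machinery of \cite{Meyer-Nest:BC_Localization} (in the countable-coproduct version of \cite{dellAmbrogio:Tensor_triangular}) provides, for every object $D$, an exact triangle $P(D)\to D\to N(D)\to\Sigma P(D)$ with $P(D)\in\Boot^G$ and $N(D)\in\mathcal{N}$; applying $\KK^G_*(X,\blank)$ for $X\in{}^\perp\mathcal{N}$ to this triangle shows that $\KK^G_*(X,N(X))=0=\KK^G_*(\Sigma P(X),N(X))$, which forces $N(X)=0$, so that $\Boot^G={}^\perp\mathcal{N}$, the left orthogonal complement.

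First I would dispose of the converse: every object of $\Boot^G$ satisfies the Künneth formula. For a fixed $B\in\mathcal{M}$, the full subcategory of objects $D$ with $\K^G_*(D\otimes B)=0$ is localising\(_{\aleph_1}\) --- tensoring with $B$ is exact and commutes with countable direct sums, and $\K^G_*=\KK^G_*(\C,\blank)$ is homological and commutes with countable direct sums --- and it contains every elementary $C$ by the definition of $\mathcal{M}$, hence it contains $\Boot^G$. In particular, if both $A$ and $A^*$ lie in $\Boot^G$, then both satisfy the Künneth formula; this settles the ``vice versa''.

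For the forward direction the crucial input is the inclusion $\mathcal{N}\subseteq\mathcal{M}$. By \cite{Echterhoff-Emerson-Kim:Duality} each elementary $C$ is dualisable and its dual $C^*$ is again a type~I \(G\)\nb-\(\Cst\)-algebra, so $C^*\in\Boot^G$ by Theorem~\ref{the:type_I_in_bootstrap}; dualisability of $C$ then gives $\K^G_*(C\otimes N)\cong\KK^G_*(C^*,N)$, which vanishes for $N\in\mathcal{N}$, so $N\in\mathcal{M}$. Now suppose $A$ is dualisable and satisfies the Künneth formula --- the case in which $A^*$ satisfies it is obtained by interchanging $A$ and $A^*$, which is legitimate since $A^*$ is dualisable with dual $A$. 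For every $N\in\mathcal{N}\subseteq\mathcal{M}$ we get, using dualisability of $A$ once more, $\KK^G_*(A^*,N)\cong\K^G_*(A\otimes N)=0$; as $N$ was arbitrary in $\mathcal{N}$, this means $A^*\in{}^\perp\mathcal{N}=\Boot^G$. Finally $A^*$ is dualisable and lies in $\Boot^G$, hence satisfies the Künneth formula by the converse; rerunning the step just carried out with the roles of $A$ and $A^*$ exchanged yields $A=(A^*)^*\in\Boot^G$.

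The only non-formal ingredient is the assertion that the dual of an elementary \(G\)\nb-\(\Cst\)-algebra again lies in $\Boot^G$ --- equivalently, is again (up to \(\KK^G\)-equivalence) type~I --- which is where \cite{Echterhoff-Emerson-Kim:Duality} enters and which I expect to be the point requiring the most care; everything else is bookkeeping with localising\(_{\aleph_1}\) subcategories and the two orthogonality relations. A secondary point worth checking is that the subcategory $\{D:\K^G_*(D\otimes B)=0\}$ is closed not only under exact triangles and suspensions but also under countable direct sums and retracts, so that it is genuinely localising\(_{\aleph_1}\) and the argument of the second paragraph applies.
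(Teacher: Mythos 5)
Your proof is correct and follows essentially the same route as the paper: identify the K\"unneth-orthogonal class with the right orthogonal \(\Boot^{G,\perp}\) of the bootstrap class using that elementary \(G\)\nb-\(\Cst\)-algebras are dualisable with duals again in \(\Boot^G\), use the complementary-pair (Bousfield localisation) structure to get \(\Boot^G={}^\perp\Boot^{G,\perp}\), and then apply the duality isomorphism \(\K^G_*(A\otimes B)\cong\KK^G_*(A^*,B)\) twice to place first \(A^*\) and then \(A\) in \(\Boot^G\). The only cosmetic differences are that you cite the localisation triangles directly rather than \cite{Meyer:Homology_in_KK_II}*{Theorem 3.16} and prove only the inclusion you need rather than the full equality of the two orthogonal classes.
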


\begin{proof}
  Since~\(\Boot^G\) is generated by the elementary
  \(G\)\nb-\(\Cst\)-algebras, \(\KK^G_*(C,B)=0\) for all elementary
  \(G\)\nb-\(\Cst\)\nb-algebras~\(C\) if and only if
  \(\KK^G_*(C,B)=0\) for all \(C\in\Boot^G\).  Any elementary
  \(G\)\nb-\(\Cst\)\nb-algebra~\(C\) is dualisable with a dual
  in~\(\Boot^G\).  Hence \(\K^G_*(C\otimes B)\cong \KK^G_*(C^*,B)=0\)
  for elementary~\(C\) if \(\KK^G_*(C',B)=0\) for all elementary
  \(G\)\nb-\(\Cst\)\nb-algebras~\(C'\); conversely \(\KK^G_*(C,B)
  \cong \K^G_*(C^*\otimes B)=0\) for elementary~\(C\) if
  \(\K^G_*(C'\otimes B)=0\) for all elementary
  \(G\)\nb-\(\Cst\)\nb-algebras~\(C'\).  Let us denote the class of
  \(G\)\nb-\(\Cst\)-algebras with these equivalent
  properties~\(\Boot^{G,\perp}\).

  It follows from \cite{Meyer:Homology_in_KK_II}*{Theorem 3.16}
  that \((\Boot^G,\Boot^{G,\perp})\) is a complementary pair of
  localising subcategories.  In particular, if
  \(\KK^G_*(A,B)=0\) for all \(B\in\Boot^{G,\perp}\), then
  \(A\in\Boot^G\).

  Now assume, say, that~\(A\) satisfies a K\"unneth formula.
  Then \(\KK^G_*(A^*,B)\cong \K^G_*(A\otimes B)=0\) for all
  \(B\in\Boot^{G,\perp}\).  Thus \(A^*\in\Boot^G\).  Then
  \(\K^G_*(A^*\otimes B)=0\) for all \(B\in\Boot^{G,\perp}\)
  because the class of~\(C\) with \(\K^G_*(C\otimes B)=0\) is
  localising and contains all elementary~\(C\) if
  \(B\in\Boot^{G,\perp}\).  As above, this implies
  \((A^*)^*=A\in\Boot^G\).
\end{proof}

The proof of Theorem~\ref{the:Hodgkin_Lie_bootstrap} above used that,
for a Hodgkin Lie group, \(\Boot^G\) is already generated by~\(\C\).  For
more general groups, we also expect that fewer generators suffice to
generate~\(\Boot^G\).  But we only need and only prove a result about
topologically cyclic groups here.

A locally compact group~\(G\) is called \emph{topologically cyclic} if
there is an element \(g\in G\) that generates a dense subgroup
of~\(G\).  A topologically cyclic group is necessarily Abelian.  We
are interested in topologically cyclic, compact Lie groups here.  A
compact Lie group is topologically cyclic if and only if it is
isomorphic to \(\T^r\times F\) for some \(r\ge0\) and some finite
cyclic group~\(F\) (possibly the trivial group), where \(\T=\R/\Z
\cong \Un(1)\).  Here we use that any extension \(\T^r \mono E\epi F\)
for a finite cyclic group~\(F\) splits.  This also implies that any
projective representation of a finite cyclic groups is a
representation.

\begin{theorem}
  \label{the:bootstrap_top_cyclic}
  Let~\(G\) be a topologically cyclic, compact Lie group.  Then the
  bootstrap class \(\Boot^G\subseteq\KK^G\) is already generated by the
  finitely many \(G\)-\(\Cst\)\nb-algebras \(\Cont(G/H)\) for all open
  subgroups \(H\subseteq G\).

  Furthermore, an object of~\(\Boot^G\) is compact\(_{\aleph_1}\) if and
  only if it is dualisable if and only if it belongs to the thick
  subcategory generated by \(\Cont(G/H)\) for open subgroups \(H\subseteq
  G\).
\end{theorem}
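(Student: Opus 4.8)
The plan is to reduce everything to the identity component $G^0$, which is a torus and hence a Hodgkin Lie group for which $\Boot^{G^0}=\langle\C\rangle$ (the localising$_{\aleph_1}$ subcategory generated by $\C$) is already understood, together with a book-keeping induction over the finite quotient $G/G^0$. Recall the structure: $G\cong\T^r\times F$ with $F$ finite cyclic and $G^0=\T^r$, and a closed subgroup of $G$ is open exactly when it contains $G^0$; so the open subgroups are the finitely many subgroups $\T^r\times F'$ with $F'\le F$, and $\Cont(G/H)=\Cont(F/F')$ inflated along $G\to F$. Each $\Cont(G/H)=\Ind_H^G\C$ with $H$ open is an elementary $G$-$\Cst$-algebra, so lies in $\Boot^G$; hence the localising$_{\aleph_1}$ subcategory $\mathcal B$ generated by the $\Cont(G/H)$ with $H$ open satisfies $\mathcal B\subseteq\Boot^G$. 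For the converse it suffices to show that each elementary $G$-$\Cst$-algebra $\Ind_H^G\Mat_n^\alpha\C$ ($H\subseteq G$ closed, $\alpha$ an action of $H$ on $\Mat_n\C$) lies in $\mathcal B$, and I would do this by induction on $(\dim G,\lvert G/G^0\rvert)$ in the lexicographic order. The base cases are $G=F$ cyclic — where all subgroups are open and $\mathrm H^2(H,\U(1))=0$, so $\alpha$ lifts to a linear representation of the cyclic group $H$ and $\Ind_H^G\Mat_n^\alpha\C$ is $G$-equivariantly Morita equivalent to $\Cont(G/H)$ — and $G=\T^r$, which is Theorem~\ref{the:Hodgkin_Lie_bootstrap}.

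For the inductive step put $H':=G^0\cdot H=\T^r\times q(H)$, an open subgroup, and factor $\Ind_H^G=\Ind_{H'}^G\circ\Ind_H^{H'}$. If $q(H)\subsetneq F$, then $\dim H'=\dim G$ but $\lvert H'/(H')^0\rvert<\lvert G/G^0\rvert$, so by the inductive hypothesis the elementary $H'$-algebra $\Ind_H^{H'}\Mat_n^\alpha\C$ lies in the subcategory generated by the $\Cont(H'/K)$ with $K$ open in $H'$; applying the exact, coproduct-preserving functor $\Ind_{H'}^G$, and using that an open subgroup of $H'$ contains $G^0$ and is therefore open in $G$, we land in $\mathcal B$. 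If instead $H\subseteq\T^r$ (so $q(H)=\{e\}$), factor $\Ind_H^G=\Ind_{\T^r}^G\circ\Ind_H^{\T^r}$: the elementary $\T^r$-algebra $\Ind_H^{\T^r}\Mat_n^\alpha\C$ is type~I, hence lies in $\Boot^{\T^r}=\langle\C\rangle$ by Theorems~\ref{the:Hodgkin_Lie_bootstrap} and~\ref{the:type_I_in_bootstrap}, so $\Ind_{\T^r}^G$ of it lies in $\langle\Ind_{\T^r}^G\C\rangle=\langle\Cont(G/\T^r)\rangle\subseteq\mathcal B$.

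This leaves the main case $q(H)=F$, i.e. $G^0H=G$, which is where the real difficulty lies. When $\alpha$ is trivial the $G$-action on $G/H$ factors through the quotient homomorphism $\rho\colon G\twoheadrightarrow G/H$ (translation of $G/H$ on itself), so $\Cont(G/H)$ is inflated along $\rho$ from the regular $(G/H)$-algebra; since $G/H$ is again topologically cyclic, with either $\dim(G/H)<\dim G$ (when $\dim H>0$) or $\lvert(G/H)/(G/H)^0\rvert=\lvert F/q(H)\rvert=1<\lvert F\rvert$ (when $\dim H=0$, using $q(H)=F$), the inductive hypothesis applies to $G/H$, and inflation along $\rho$ carries its generators $\Cont((G/H)/K)$ to $\Cont(G/\rho^{-1}K)$ with $\rho^{-1}K$ open in $G$; hence $\Cont(G/H)\in\mathcal B$. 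The genuinely hard point is to remove the triviality of $\alpha$ here: I would restrict to $\T^r$, where the Mackey subgroup formula gives $\Res_{\T^r}^G\Ind_H^G\Mat_n^\alpha\C\cong\Ind_{H\cap\T^r}^{\T^r}\Mat_n^{\alpha|}\C$ (one double coset, $\T^r$ being normal with $\T^r H=G$), a type~I $\T^r$-algebra, and then push back up by trivialising $\alpha$ on $H\cap\T^r$ after an equivariant stabilisation — replacing $\Mat_n^\alpha\C$ by a $\KK^H$-equivalent matrix algebra on which $H\cap\T^r$ acts through a genuine representation — so that the resulting induced algebra descends along $G\to G/(H\cap\T^r)$ to a topologically cyclic group of strictly smaller dimension and the untwisted case applies; the sub-case $\dim H=0$ with non-cyclic finite part of $H$ needs the projective representation of $H$ to be handled directly, and this is where I expect the main obstacle to be.

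Finally, once $\Boot^G=\mathcal B$ is established, the equivalence of the three "furthermore" conditions follows exactly as in Proposition~\ref{pro:dualisable_compact}: the finitely many generators $\Cont(G/H)=\Ind_H^G\C$ are elementary, hence dualisable by \cite{Echterhoff-Emerson-Kim:Duality}*{Corollary 2.2} and therefore compact$_{\aleph_1}$ (because $\C$ is compact$_{\aleph_1}$ and $\otimes$ commutes with countable coproducts), so $\Boot^G$ is compactly$_{\aleph_1}$ generated by them; \cite{dellAmbrogio:Tensor_triangular}*{Corollary 2.4} then identifies the compact$_{\aleph_1}$ objects with the thick subcategory generated by these algebras, and the Brown representability argument shows that the dualisable objects of $\Boot^G$ form a thick subcategory containing these generators, hence coinciding with that thick subcategory.
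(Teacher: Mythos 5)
Your reduction steps for $q(H)\subsetneq F$ and $H\subseteq\T^r$, the untwisted case via inflation along $G\to G/H$, and the ``furthermore'' part (which matches the paper's own one-line reduction to \cite{dellAmbrogio:Tensor_triangular}*{Corollary~2.4} as in Proposition~\ref{pro:dualisable_compact}) are fine. But the case you yourself flag as the main obstacle is a genuine gap, and it is not a peripheral one: by Definition~\ref{def:bootstrap_general}, $\Boot^G$ is generated by \emph{all} elementary algebras $\Ind_H^G\Mat_n^\alpha\C$, and for $G=\T\times\Z/n$ the subgroup $H=\Z/n\times\Z/n$ (with $G^0H=G$) carries genuinely projective actions (the Heisenberg cocycle), for which $\K^H_*(\Mat_n^\alpha\C)\cong\Z$ rather than $\Rep(H)$; so these twisted generators cannot be dismissed and must be shown to lie in $\langle\Cont(G/K)\colon K\text{ open}\rangle_{\loc}$. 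Your sketched remedy does not do this: knowing that $\Res^G_{\T^r}$ of the induced algebra is a nice type~I $\T^r$-algebra says nothing about its position in $\KK^G$ (restriction forgets exactly the information at stake -- compare Example~\ref{exa:pt_does_not_generate_for_Ztwo}), and ``trivialising $\alpha$ on $H\cap\T^r$ after an equivariant stabilisation'' so as to descend along $G\to G/(H\cap\T^r)$ would require killing the cocycle on all of $H$, which is impossible when it is nondegenerate on the finite non-cyclic part. So the induction on $(\dim G,\lvert G/G^0\rvert)$ has no mechanism for absorbing the twist in the case $G^0H=G$, and the proof is incomplete precisely there.

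The paper's proof avoids this issue by a different global argument rather than a case-by-case induction: by Theorem~\ref{the:type_I_in_bootstrap}, $\Boot^G$ consists of the algebras $\KK^G$-equivalent to type~I ones, and Baaj--Skandalis duality identifies $\KK^G$ with $\KK^{\hat G}$ for the discrete abelian dual $\hat G\cong\Z^r\times\hat F$, matching type~I algebras on both sides. Since $\hat G$ has a dual Dirac morphism with $\gamma=1$, \emph{every} $\hat G$-algebra lies in the localising subcategory generated by inductions from finite subgroups of $\hat G$, i.e.\ from subgroups of $\hat F$; the finite cyclic case (where no nontrivial projective representations occur) then reduces the generators to $\Cont_0(\hat G/\hat H)$, which correspond under duality to $\Cont(G/H)$ for open $H\subseteq G$. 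The twisted elementary algebras never have to be analysed individually, because the crossed product of a type~I algebra is type~I and the Baum--Connes decomposition applies to all $\hat G$-algebras at once. Some such global input (duality plus $\gamma=1$, or an equivalent decomposition result) is the missing idea your inductive approach would need to be completed.
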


\begin{proof}
  The second statement about compact\(_{\aleph_1}\) objects
  in~\(\Boot^G\) follows from the first one and
  \cite{dellAmbrogio:Tensor_triangular}*{Corollary~2.4},
  compare the proof of
  Proposition~\ref{pro:dualisable_compact}.  Thus it suffices
  to prove that the objects \(\Cont(G/H)\) for open subgroups
  already generate~\(\Boot^G\).  For this, we use an isomorphism
  \(G\cong \T^r\times F\) for some \(r\ge 0\) and some finite
  cyclic subgroup~\(F\).

  Let us first consider the special case \(r=0\), that is, \(G\) is a
  finite cyclic group.  In this case, any subgroup of~\(G\) is open
  and again cyclic.  We observed above that cyclic groups have
  no non-trivial
  projective representations.  Thus any elementary
  \(G\)\nb-\(\Cst\)-algebra is Morita equivalent to \(\Cont(G/H)\) for
  some open subgroup~\(H\) in~\(G\).  Hence the assertion of the
  theorem is just the definition of~\(\Boot^G\) in this case.

  If~\(F\) is trivial, then the assertion follows from
  Theorem~\ref{the:Hodgkin_Lie_bootstrap}.  Now we consider the
  general
  case where both \(F\) and~\(\T^r\) are non-trivial.

  The Pontryagin dual~\(\hat{G}\) of~\(G\) is isomorphic to the
  discrete group \(\Z^r\times F\).  If~\(A\) is a
  \(G\)\nb-\(\Cst\)-algebra, then \(G\ltimes A\) carries a canonical
  action of~\(\hat{G}\) called the dual action.  Similarly,
  \(\hat{G}\ltimes A\) for a \(\hat{G}\)\nb-\(\Cst\)-algebra~\(A\)
  carries a canonical dual action of~\(G\).  This provides functors
  \(\KK^G\to \KK^{\hat{G}}\) and \(\KK^{\hat{G}}\to \KK^G\).
  Baaj--Skandalis duality says that they are inverse to each
  other up to natural equivalence (see
  \cite{Baaj-Skandalis:Hopf_KK}*{Section~6}).  Since both functors
  are triangulated, this is an equivalence of triangulated
  categories.

  If \(A\)~is type~I, then so is \(G\ltimes A\).  Hence all objects in
  \(\Boot^{\hat{G}}\subseteq \KK^{\hat{G}}\) are \(\KK^{\hat{G}}\)-equivalent to
  a \(\hat{G}\)\nb-action on a type~I \(\Cst\)-algebra by
  Theorem~\ref{the:type_I_in_bootstrap}.

  The group~\(\hat{G}\) is Abelian and hence satisfies a very
  strong form of the Baum--Connes conjecture: it has a dual
  Dirac morphism and \(\gamma=1\) in the sense of
  \cite{Meyer-Nest:BC}*{Definition 8.1}.  From this it follows
  that any \(\hat{G}\)\nb-\(\Cst\)-algebra~\(A\) belongs to the
  localising subcategory of \(\KK^{\hat{G}}\) that is generated
  by \(\Ind_{\hat{H}}^{\hat{G}} A\) for finite subgroups
  \(\hat{H}\subseteq\hat{G}\) (this is shown as in the proof of
  \cite{Meyer-Nest:BC}*{Theorem 9.3}).

  The finite subgroups in \(\Z^r\times\hat{F}\) are exactly the
  subgroups of~\(\hat{F}\), of course.  Since we have induction
  in stages, we may assume \(\hat{H}=\hat{F}\).  Thus the
  subcategory of type~I \(\hat{G}\)\nb-\(\Cst\)-algebras is
  already generated by \(\Ind_{\hat{F}}^{\hat{G}} A\) for
  type~I \(\hat{F}\)\nb-\(\Cst\)-algebras~\(A\).
  Since~\(\hat{F}\) is a finite cyclic group, the discussion
  above shows that the category of type~I
  \(\hat{F}\)\nb-\(\Cst\)-algebras~\(A\) is already generated
  by \(\Cont_0(\hat{F}/\hat{H})\) for subgroups
  \(\hat{H}\subseteq\hat{F}\).  Thus~\(\Boot^{\hat{G}}\) is
  generated by
  the \(\hat{G}\)\nb-\(\Cst\)-algebras
  \(\Ind_{\hat{F}}^{\hat{G}}\Cont_0(\hat{F}/\hat{H}) \cong
  \Cont_0(\hat{G}/\hat{H})\).  The finite subgroups
  \(\hat{H}\subseteq\hat{G}\) are exactly the orthogonal
  complements of (finite-index) open subgroups \(H\subseteq G\).

  Now \(G\ltimes \Cont_0(G/H)\) is Morita equivalent to
  \(\Cst(H) \cong \Cont_0(\hat{G}/\hat{H})\) for any open
  subgroup \(H\subseteq
  G\), where \(\hat{H}\subseteq\hat{G}\) denotes the orthogonal
  complement of~\(H\) in~\(\hat{G}\).  The dual action
  on~\(\Cont_0(\hat{G}/\hat{H})\) comes from the translation
  action of~\(\hat{G}\).  Thus the \(G\)- and
  \(\hat{G}\)\nb-\(\Cst\)-algebras \(\Cont_0(G/H)\) and
  \(\Cont_0(\hat{G}/\hat{H})\) correspond to each other via
  Baaj--Skandalis duality.  We conclude that the
  \(G\)\nb-\(\Cst\)-algebras \(\Cont_0(G/H)\) for open
  subgroups \(H\subseteq G\) generate~\(\Boot^G\).
\end{proof}

Let~\(G\) be topologically cyclic, say, \(G\cong \T^r\times
\Z/k\) for some \(r\ge0\), \(k\ge1\).  Then open subgroups
of~\(G\) correspond to subgroups of~\(\Z/k\) and thus to
divisors~\(d\) of~\(k\).  The representation ring of~\(G\) is
\begin{equation}
  \label{eq:RepG_top_cyclic}
  \Rep(G)
  \cong \Rep(\T^r) \otimes \Rep(\Z/k)
  \cong \Z[x_1,\dotsc,x_r,(x_1\dotsm x_r)^{-1}]
  \otimes \Z[t]/(t^k-1).
\end{equation}
Let
\[
t^k-1 = \prod_{d\mid k} \Phi_d(t)
\]
be the decomposition into cyclotomic polynomials.  Each
factor~\(\Phi_d\) generates a minimal prime ideal
of~\(\Rep(G)\), and these are all minimal prime ideals
of~\(\Rep(G)\).  The localisation at this prime ideal gives the
field \(\Q(\theta_d)(x_1,\dotsc,x_r)\) of rational functions
in~\(r\) variables over the cyclotomic field~\(\Q(\theta_d)\),
and the product of these localisations is the total ring of fractions
of~\(\Rep(G)\),
\[
S^{-1}\Rep(G) = \prod_{d\mid k} \Q(\theta_d)(x_1,\dotsc,x_r).
\]
(Compare Lemma~\ref{lem:localised_RepG}.)

\begin{lemma}
  \label{lem:GH_zero_in_localisation}
  Let \(H\subsetneq G\) be a proper open subgroup.  The
  canonical map
  \[
  \Rep(G) \to \KK^G_0(\Cont(G/H),\Cont(G/H))
  \]
  from the exterior product in \(\KK^G\) factors through the
  restriction map \(\Rep(G)\to\Rep(H)\).  The image of
  \(\Cont(G/H)\) in the localisation of~\(\KK^G\) at the prime
  ideal~\((\Phi_k)\) vanishes.
\end{lemma}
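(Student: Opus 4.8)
The plan is to derive both assertions from the induction--restriction adjunction together with the description~\eqref{eq:RepG_top_cyclic} of \(\Rep(G)\).

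\emph{Factorisation of the exterior product map.} Write \(\Cont(G/H) = \Ind_H^G\C\) and use the standard adjunction isomorphism
\[
\Phi\colon \KK^G_*(\Ind_H^G\C, B) \xrightarrow{\ \cong\ } \KK^H_*(\C, \Res^G_H B),
\]
natural in the \(G\)-\(\Cst\)-algebra~\(B\). Restriction is a strict symmetric monoidal functor \(\Res^G_H\colon\KK^G\to\KK^H\) and induces on unit endomorphisms the restriction map \(\Res^G_H\colon\Rep(G)\to\Rep(H)\); in particular \(\Res^G_H(x\otimes\id_B) = \Res^G_H(x)\otimes\id_{\Res^G_H B}\) for \(x\in\Rep(G) = \KK^G_0(\C,\C)\). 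Taking \(B = \Cont(G/H)\) and applying naturality of~\(\Phi\) to the endomorphism \(x\otimes\id_{\Cont(G/H)}\), one gets \(\Phi(x\otimes\id_{\Cont(G/H)}) = \Res^G_H(x)\cdot\Phi(\id_{\Cont(G/H)})\), which depends on~\(x\) only through \(\Res^G_H(x)\). Since~\(\Phi\) is bijective, \(x\otimes\id_{\Cont(G/H)}\) depends only on \(\Res^G_H(x)\); this is exactly the assertion that the exterior product map \(\Rep(G)\to\KK^G_0(\Cont(G/H),\Cont(G/H))\) factors through \(\Res^G_H\colon\Rep(G)\to\Rep(H)\).

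\emph{Vanishing in the localisation.} First I would pin down~\(H\): an open subgroup of \(G\cong\T^r\times\Z/k\) contains the identity component \(\T^r\times\{0\}\), hence equals \(\T^r\times K\) for a subgroup \(K\subseteq\Z/k\), and properness of~\(H\) forces \(K\) to have order~\(e\) for some proper divisor \(e\mid k\), \(e<k\). Put \(s\defeq t^e-1\in\Rep(G)\), where \(t\in\Rep(\Z/k)\subseteq\Rep(G)\) is the standard generating character. Restricting~\(t\) to the order-\(e\) subgroup~\(K\) yields a character whose \(e\)-th power is trivial, so \(\Res^G_H(s) = 0\); by the factorisation just proved, \(s\cdot\id_{\Cont(G/H)} = 0\) in \(\KK^G_0(\Cont(G/H),\Cont(G/H))\). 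On the other hand, the image of~\(t\) in the residue field \(\Rep(G)/(\Phi_k) = \Q(\theta_k)(x_1,\dots,x_r)\) is a primitive \(k\)-th root of unity~\(\theta_k\), and \(\theta_k^e\ne1\) since \(e<k\); hence \(s\notin(\Phi_k)\). Therefore~\(s\) is invertible in the localisation of \(\KK^G\) at the prime ideal \((\Phi_k)\), and there \(\id_{\Cont(G/H)} = s^{-1}\cdot\bigl(s\cdot\id_{\Cont(G/H)}\bigr) = 0\); an object whose identity morphism vanishes is a zero object, so the image of \(\Cont(G/H)\) vanishes.

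The step that I expect to require the most care is the \(\Rep(G)\)-linearity invoked above, namely that the adjunction isomorphism~\(\Phi\) intertwines post-composition with \(x\otimes\id_B\) on the source side with multiplication by \(\Res^G_H(x)\) on the target side. This is a formal consequence of the naturality of the adjunction and of \(\Res^G_H\) being symmetric monoidal, but it is the one place where one must be precise about how the \(\Rep(G)\)-module structures interact with induction and restriction. Everything else --- the classification of open subgroups of \(\T^r\times\Z/k\), the triviality of \(\Res^G_H(t)^e\), and the non-vanishing of \(\theta_k^e-1\) --- is routine.
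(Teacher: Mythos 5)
Your proof is correct, and its overall skeleton is the same as the paper's: exhibit an element of \(\Rep(G)\) that dies under restriction to~\(H\) but lies outside \((\Phi_k)\), use the factorisation through \(\Rep(H)\) to see it kills \(\id_{\Cont(G/H)}\), and conclude that an invertible-yet-zero endomorphism forces the object to vanish in the localisation. The two halves are implemented differently, though. For the factorisation, the paper stays inside the geometric calculus: the exterior product of \(\id_{\Cont(G/H)}\) with \(\xi\) is the correspondence \(G/H=G/H=G/H\) with class \(p^*(\xi)\), and the induction isomorphism \(\K^0_G(G/H)\cong\Rep(H)\) identifies \(p^*\) with restriction; you instead invoke the induction--restriction adjunction \(\KK^G_*(\Ind_H^G\C,B)\cong\KK^H_*(\C,\Res B)\) and its naturality, which is legitimate here precisely because \(H\) is open (equivalently \(G/H\) compact), and your formula \(\Phi(x\otimes\id)=\Res(x)\cdot\Phi(\id)\) does yield an explicit factorising map \(y\mapsto\Phi^{-1}(y\cdot\Phi(\id))\) on all of \(\Rep(H)\). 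For the vanishing, the paper uses the single element \((t^k-1)/\Phi_k=\prod_{d\mid k,\,d\neq k}\Phi_d\), which is annihilated by restriction to \emph{every} proper open subgroup at once, whereas your \(s=t^e-1\) is tailored to the given~\(H\); both are outside \((\Phi_k)\) and both work. One small imprecision: \(\Rep(G)/(\Phi_k)\) is an integral domain, not a field (its fraction field is \(\Q(\theta_k)(x_1,\dotsc,x_r)\)), but since \(\theta_k^e-1\neq0\) already in the domain, your conclusion \(s\notin(\Phi_k)\) is unaffected.
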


\begin{proof}
  The exterior product of the identity map on \(\Cont(G/H)\)
  and \(\xi\in\Rep(G) \cong \KK^G_0(\C,\C)\) is given by the
  geometric correspondence \(G/H=G/H=G/H\) with the class
  \(p^*(\xi) \in \K^0_G(G/H)\), where \(p\colon G/H\to\pt\) is
  the constant map.  Now identify \(\K^0_G(G/H) \cong
  \K^0_H(\pt) \cong \Rep(H)\) and~\(p^*\) with the restricton
  map \(\Rep(G)\to\Rep(H)\) to get the first statement.

  We have \(H\cong \T^r \times \Z/d\) embedded via
  \((x,j)\mapsto (x,jk/d)\) into \(G\cong \T^r\times \Z/k\).
  If \(H\neq G\), then \(d\neq k\).  The restriction map
  \(\Rep(G)\to\Rep(H)\) annihilates the polynomial
  \((t^k-1)/\Phi_k = \prod_{d\mid k, d\neq k} \Phi_d\).  This
  polynomial does not belong to the prime ideal~\((\Phi_k)\)
  and hence becomes invertible in the localisation
  of~\(\Rep(G)\) at~\((\Phi_k)\).  Since an invertible
  endomorphism can only be zero on the zero object,
  \(\Cont(G/H)\) becomes zero in the localisation of~\(\KK^G\)
  at~\((\Phi_k)\).
\end{proof}

\subsection{Localisation of the bootstrap class}
\label{sec:localise_bootstrap}

\begin{proposition}
  \label{pro:KKG_localisation_loc_cyc}
  Let \(G\cong \T^r\times\Z/k\) be topologically cyclic.
  Let~\(\Boot^G_d\) be the thick subcategory of dualisable objects
  in the bootstrap class \(\Boot^G\subseteq \KK^G\).  Any object
  in the localisation of~\(\Boot^G_d\) at the prime
  ideal~\((\Phi_k)\) in~\(\Rep(G)\) is isomorphic to a finite
  direct sum of suspensions of~\(\C\).
\end{proposition}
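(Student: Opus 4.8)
The plan is to transport the elementary fact that a perfect complex over a field is a finite direct sum of shifts of the field into the $\Z/2$\nb-graded triangulated setting of~$\KK^G$, after first cutting the generators of~$\Boot^G_d$ down to~$\C$ using the vanishing in Lemma~\ref{lem:GH_zero_in_localisation}.

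First I would fix notation. By Theorem~\ref{the:bootstrap_top_cyclic}, the thick subcategory $\Boot^G_d$ of dualisable objects of $\Boot^G$ is precisely the thick subcategory of $\KK^G$ generated by the finitely many $G$\nb-$\Cst$-algebras $\Cont(G/H)$ for open subgroups $H\subseteq G$. Let $\KK^G_{(\Phi_k)}$ denote the localisation of $\KK^G$ at the prime ideal $(\Phi_k)\subseteq\Rep(G)$ in the sense of~\cite{Inassaridze-Kandelaki-Meyer:Finite_Torsion_KK}; it is a triangulated category, linear over the ring $F\defeq\Rep(G)_{(\Phi_k)}$, and the localisation functor $L\colon\KK^G\to\KK^G_{(\Phi_k)}$ is triangulated. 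By the description of the minimal prime ideals of $\Rep(G)$ recalled before Lemma~\ref{lem:GH_zero_in_localisation}, the ring~$F$ is in fact a field. Since $\KK^G_*(\C,\C)=\Rep(G)$ is concentrated in even degree, we get $\Hom_{\KK^G_{(\Phi_k)}}(\C,\C)=F$ while $\Hom_{\KK^G_{(\Phi_k)}}(\C,\Sigma\C)=\Hom_{\KK^G_{(\Phi_k)}}(\Sigma\C,\C)=0$, where $\Sigma$ is the suspension; by Bott periodicity $\Sigma^2\cong\id$, so the only suspensions of~$\C$ are $\C$ and~$\Sigma\C$.

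The heart of the argument is to show that the full subcategory $\Scal\subseteq\KK^G_{(\Phi_k)}$ of objects isomorphic to $\C^{\oplus a}\oplus(\Sigma\C)^{\oplus b}$ for some $a,b\ge0$ is a thick triangulated subcategory. Closure under~$\Sigma$ is clear. For closure under mapping cones one uses the vanishing of the mixed Hom-groups above: a morphism between two objects of~$\Scal$ splits as $g\oplus\Sigma h$ with $g$ and~$h$ matrices over the field~$F$; Gaussian elimination over~$F$ brings~$g$, up to composition with isomorphisms of its source and target, into the form $\id_{\C^{\oplus r}}\oplus\bigl(0\colon\C^{\oplus p}\to0\bigr)\oplus\bigl(0\colon0\to\C^{\oplus q}\bigr)$, whose cone is $0\oplus(\Sigma\C)^{\oplus p}\oplus\C^{\oplus q}\in\Scal$, and likewise for~$h$. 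For closure under direct summands one uses that an idempotent in $\Mat_a(F)\times\Mat_b(F)\cong\End\bigl(\C^{\oplus a}\oplus(\Sigma\C)^{\oplus b}\bigr)$ becomes, after conjugation by an isomorphism, a pair of coordinate projections, which split in $\KK^G_{(\Phi_k)}$ with image again of the form $\C^{\oplus r}\oplus(\Sigma\C)^{\oplus s}$. In particular $\Scal$ contains the thick subcategory generated by~$\C$.

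Finally I would assemble the pieces. Set $\Dcat\defeq\{A\in\KK^G\mid L(A)\in\Scal\}$; since $L$ is triangulated and $\Scal$ is thick, $\Dcat$ is a thick subcategory of~$\KK^G$. By Lemma~\ref{lem:GH_zero_in_localisation}, $L(\Cont(G/H))=0\in\Scal$ for every proper open subgroup $H\subsetneq G$, and $L(\Cont(G/G))=L(\C)=\C\in\Scal$; hence $\Dcat$ contains all the generators $\Cont(G/H)$ of~$\Boot^G_d$, so $\Boot^G_d\subseteq\Dcat$. This says exactly that for every object~$A$ of~$\Boot^G_d$ the object $L(A)$ is isomorphic in $\KK^G_{(\Phi_k)}$ to a finite direct sum of suspensions of~$\C$, which is the claim. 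I expect the main obstacle to be the verification that $\Scal$ is closed under cones (and under summands); the rest is a formal combination of Theorem~\ref{the:bootstrap_top_cyclic}, Lemma~\ref{lem:GH_zero_in_localisation}, and standard properties of triangulated localisations.
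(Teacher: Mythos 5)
Your proposal is correct and follows essentially the same route as the paper: reduce the generators to~\(\C\) via Theorem~\ref{the:bootstrap_top_cyclic} and Lemma~\ref{lem:GH_zero_in_localisation}, then use that the localised graded endomorphism ring of~\(\C\) is \(F[\beta,\beta^{-1}]\) for a field~\(F\) to show, by choosing bases (Gaussian elimination) and splitting idempotents, that finite direct sums of suspensions of~\(\C\) form a thick subcategory of the localisation. Your explicit preimage category~\(\Dcat\) just makes formal the paper's step ``the image of \(\Boot^G_d\) is contained in the thick subcategory generated by~\(\C\)''; otherwise the two arguments coincide.
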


\begin{proof}
  By Theorem~\ref{the:bootstrap_top_cyclic} an object
  of~\(\Boot^G\) is dualisable if and only if it belongs to the
  thick subcategory generated by \(\Cont(G/H)\) for open
  subgroups \(H\subseteq G\).
  Lemma~\ref{lem:GH_zero_in_localisation} shows that all of
  them except \(\C=\Cont(G/G)\) become zero when we localise
  at~\((\Phi_k)\).  Hence the image of~\(\Boot^G_d\) in the
  localisation is contained in the thick subcategory generated
  by~\(\C\).  We must show that the objects isomorphic to a
  direct sum of suspensions of~\(\C\) already form a thick
  subcategory in the localisation of~\(\KK^G\) at~\((\Phi_k)\).

  The graded endomorphism ring of~\(\C\) in this localisation is
  \[
  \KK^G_*(\C,\C) \otimes_{\Rep(G)} \Rep(G)_{(\Phi_k)}
  \cong \Q(\theta_k)(x_1,\dotsc,x_r)[\beta,\beta^{-1}]
  \]
  with~\(\beta\) of degree two generating Bott periodicity.  It
  is crucial that \(\KK^G_*(\C,\C) \cong F[\beta,\beta^{-1}]\)
  for a field \(F \defeq \Q(\theta_k)(x_1,\dotsc,x_r)\).  The
  following argument only uses this fact.

  We map a finite direct sum \(A = \bigoplus_{i\in I}
  \C[\varepsilon_i]\) of suspensions of~\(\C\) to the
  \(\Z/2\)\nb-graded \(F\)\nb-vector spaces~\(V(A)\) with
  basis~\(I\) and generators of degree~\(\varepsilon_i\).  For two
  such direct sums, \(\KK^G_0(A,B)\) is isomorphic to the space
  of grading-preserving \(F\)\nb-linear maps \(V(A) \to V(B)\)
  because this clearly holds for a single summand.

  Now let \(f\in\KK^G_0(A,B)\) and consider the associated
  linear map \(V(f)\colon V(A)\to V(B)\).  Choose a basis for
  the kernel of~\(V(f)\) of homogeneous elements and extend it
  to a homogeneous basis for~\(V(A)\), and extend the resulting
  basis for the image of~\(V(f)\) to a homogeneous basis
  of~\(V(B)\).  This provides isomorphisms \(V(A)\cong
  V_0\oplus V_1\), \(V(B)\cong W_1\oplus W_2\) such that
  \(f|_{V_0}=0\), \(f(V_1) = W_1\) and \(f|_{V_1}\colon V_1\to
  W_1\) is an isomorphism.  The chosen bases describe how to
  lift the \(\Z/2\)\nb-graded vector spaces \(V_i\) and~\(W_i\)
  to direct sums of suspensions of~\(\C\).  Thus the map~\(f\)
  is equivalent to a direct sum of three maps \(f_0\oplus
  f_1\oplus f_2\) with \(f_0\colon A_0\to 0\) mapping to the
  zero object, \(f_1\) invertible, and \(f_2\colon 0\to B_2\)
  with domain the zero object.  The mapping cone of~\(f_0\) is
  the suspension of~\(A_0\), the cone of~\(f_2\) is~\(B_2\),
  and the cone of~\(f_1\) is zero.  Hence the
  cone is again a direct sum of suspensions of~\(\C\).
  Furthermore, any idempotent endomorphism has a range object.

  Thus the direct sums of suspensions of~\(\C\) already form an
  idempotent complete triangulated category.  As a consequence,
  any object in the thick subcategory generated by~\(\C\) is
  isomorphic to a direct sum of copies of~\(\C\).
\end{proof}

\begin{proposition}
  \label{pro:KKG_localisation_Hodgkin}
  Let~\(G\) be a Hodgkin Lie group.  Let~\(\Boot^G_d\) be the
  thick subcategory of dualisable objects in the bootstrap
  class \(\Boot^G\subseteq \KK^G\).  Any object in the
  localisation of~\(\Boot^G_d\) at~$S$ is isomorphic
  to a finite direct sum of suspensions of~\(\C\).
\end{proposition}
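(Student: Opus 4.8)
The plan is to reduce the Hodgkin case to the argument already carried out for topologically cyclic groups in Proposition~\ref{pro:KKG_localisation_loc_cyc}. The key structural fact is that for a Hodgkin Lie group, the thick subcategory of dualisable objects in~\(\Boot^G\) coincides with the thick subcategory generated by~\(\C\): this is exactly the content of (the proof of) Theorem~\ref{the:Hodgkin_Lie_bootstrap}, which shows that \(\Boot^G = \thick{\C}\) and that, as in Proposition~\ref{pro:dualisable_compact}, an object of~\(\Boot^G\) is dualisable if and only if it is compact\(_{\aleph_1}\) if and only if it belongs to~\(\thick{\C}\). So \(\Boot^G_d = \thick{\C}\subseteq\KK^G\), and after localising at~\(S\) its image lies in the thick subcategory generated by~\(\C\) in \(S^{-1}\KK^G\).

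Next I would identify the graded endomorphism ring of~\(\C\) in the localisation. Since~\(G\) is a Hodgkin Lie group, \(\Rep(G)\) is an integral domain (its total ring of fractions \(S^{-1}\Rep(G)\) is a field~\(F\), by Lemma~\ref{lem:localised_RepG} and the connectedness remark following it). Moreover \(\KK^G_*(\C,\C) \cong \Rep(G)[\beta,\beta^{-1}]\) with~\(\beta\) of degree two, so
\[
  S^{-1}\KK^G_*(\C,\C) \cong \KK^G_*(\C,\C)\otimes_{\Rep(G)} F
  \cong F[\beta,\beta^{-1}].
\]
This is precisely the hypothesis \(\KK^G_*(\C,\C)\cong F[\beta,\beta^{-1}]\) for a field~\(F\) that drives the argument in Proposition~\ref{pro:KKG_localisation_loc_cyc}; as remarked there, that argument uses nothing beyond this fact.

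Finally I would invoke that same argument verbatim: one shows that finite direct sums of suspensions of~\(\C\) form an idempotent-complete triangulated subcategory of the localisation of~\(\KK^G\) at~\(S\). Concretely, for objects \(A=\bigoplus_{i}\C[\varepsilon_i]\), \(B=\bigoplus_j\C[\varepsilon_j']\), the group \(S^{-1}\KK^G_0(A,B)\) is the space of grading-preserving \(F\)\nb-linear maps between the associated \(\Z/2\)\nb-graded \(F\)\nb-vector spaces \(V(A)\), \(V(B)\); choosing homogeneous bases adapted to kernel and image of \(V(f)\) decomposes any \(f\) as \(f_0\oplus f_1\oplus f_2\) with \(f_1\) invertible, \(f_0\) into the zero object, \(f_2\) out of the zero object, so its mapping cone is again a finite direct sum of suspensions of~\(\C\); and idempotents split. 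Hence every object of \(\thick{\C}\) in the localisation is a finite direct sum of suspensions of~\(\C\), which applies in particular to the image of~\(\Boot^G_d\). The main point requiring care—rather than a genuine obstacle—is the first paragraph: confirming that for a Hodgkin Lie group the dualisable objects of \(\Boot^G\) are exactly \(\thick{\C}\), so that there are no extra generators (unlike the topologically cyclic case, where the \(\Cont(G/H)\) must be killed off by Lemma~\ref{lem:GH_zero_in_localisation}); but this is immediate from Theorem~\ref{the:Hodgkin_Lie_bootstrap}. One could alternatively phrase the whole proof as: localising at~\(S\) kills every generator of \(\Boot^G_d\) except~\(\C\), trivially here since~\(\C\) is the only generator needed.
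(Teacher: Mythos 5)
Your proposal is correct and follows exactly the paper's own route: invoke Theorem~\ref{the:Hodgkin_Lie_bootstrap} to identify \(\Boot^G_d\) with \(\thick{\C}\), note that \(S^{-1}\Rep(G)\) is a field~\(F\) since~\(G\) is connected so that the localised graded endomorphism ring of~\(\C\) is \(F[\beta,\beta^{-1}]\), and then finish with the argument of Proposition~\ref{pro:KKG_localisation_loc_cyc}, which uses only this fact. No discrepancies to report.
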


\begin{proof}
  Theorem~\ref{the:Hodgkin_Lie_bootstrap} shows that~\(\Boot^G_d\)
  is the thick subcategory of~\(\KK^G\) generated by~\(\C\).
  The localisation \(F\defeq S^{-1}\Rep(G)\) is a field
  because~\(G\) is connected, and the graded endomorphism ring
  of~\(\C\) in the localisation of~\(\KK^G\)
  at~$S$ is \(F[\beta,\beta^{-1}]\)
  with~\(\beta\) the generator of Bott periodicity.  Now the
  argument is finished as in the proof of
  Proposition~\ref{pro:KKG_localisation_loc_cyc}.
\end{proof}

\begin{remark}
  \label{rem:localisation_sums}
  The localisations above use the groups
  \(\KK^G(A,B) \otimes_{\Rep(G)} S^{-1}\Rep(G)\) for some
  multiplicatively closed subset $S\subseteq \End(\Unit)=R(G)$,
  following~\cite{Inassaridze-Kandelaki-Meyer:Finite_Torsion_KK}.
  A drawback of this localisation is that the canonical functor
  \(\KK^G \to S^{-1}\KK^G\) does not commute with (countable)
  coproducts.  This is why Propositions
  \ref{pro:KKG_localisation_loc_cyc}
  and~\ref{pro:KKG_localisation_Hodgkin} are formulated only
  for~\(\Boot^G_d\) and not for all of~\(\Boot^G\).

  Another way to localise~\(\Boot^G\) at~\(S\) is described in
  \cite{dellAmbrogio:Tensor_triangular}*{Theorem 2.33}.  Both
  localisations agree on~\(\Boot^G_d\) by
  \cite{dellAmbrogio:Tensor_triangular}*{Theorem 2.33.h}.  The
  construction in \cite{dellAmbrogio:Tensor_triangular} has the
  advantage that the canonical functor from \(\KK^G\) to this
  localisation commutes with small\(_{\aleph_1}\) (that is,
  countable) coproducts.
  Hence analogues of Propositions
  \ref{pro:KKG_localisation_loc_cyc}
  and~\ref{pro:KKG_localisation_Hodgkin} hold for the whole
  bootstrap category~\(\Boot^G\), with small\(_{\aleph_1}\)
  coproducts of suspensions of~\(\C\) instead of finite direct
  sums of suspensions of~\(\C\).
\end{remark}

\subsection{The Lefschetz index computation using localisation}
\label{sec:trace_computation}

Now we have all the tools available to formulate and prove a
Lefschetz index formula for general compact Lie groups.  We
first prove Theorem~\ref{the:trace_on_genUnit}, which deals
with endomorphisms of objects in the thick subcategory
generated by~\(\C\).  Then we formulate and prove the general
Lefschetz index formula.

\begin{proof}[Proof of Theorem~\ref{the:trace_on_genUnit}]
  Since~\(A\) belongs to the thick subcategory generated
  by~\(\C\), it is dualisable in~\(\KK^G\) by
  Proposition~\ref{pro:dualisable_compact}.  Hence
  \(\tr(f)\in\Rep(G)\) is defined for \(f\in\KK^G_0(A,A)\).

  The image of \(\tr(f)\) in~\(S^{-1}\Rep(G)\) is the Lefschetz index
  of the image of~\(f\) in the localisation of~\(\KK^G\) at~\(S\).
  The localisation~\(S^{-1}\Rep(G)\) is a product of fields.  It is
  more convenient to compute each component separately.  This means
  that we localise at larger multiplicatively closed
  subsets~\(\bar{S}\) such that \(\bar{S}^{-1}\Rep(G)\) is one of the
  factors of~\(S^{-1}\Rep(G)\).  In this localisation, the
  endomorphisms of~\(\C\) form a field again, not a product of fields.
  If our trace formula holds for all these localisations, it also
  holds for~\(S^{-1}\Rep(G)\).

  Since the endomorphisms of~\(\C\) form a field, the same argument as
  in the proof of Proposition~\ref{pro:KKG_localisation_loc_cyc} show
  that, in this localisation, \(A\) is isomorphic to a finite sum of
  copies of suspensions of~\(\C\).  Write \(A\cong \bigoplus_{i=1}^n
  A_i\) with \(A_i\cong\C[\varepsilon_i]\) in \(S^{-1}\KK^G\) for some
  \(\varepsilon_i\in\Z/2\).  Then~\(f\) becomes a matrix~\((f_{ij})\)
  with \(f_{ij} \in S^{-1}\KK^G_0(A_j,A_i)\).

  The dual of \(A_i\cong\C[\varepsilon_i]\) is
  \(A_i^*\cong\C[\varepsilon_i] \cong A_i\), and the unit and
  counit of adjunction \(\C \leftrightarrows\C[\varepsilon_i]
  \otimes \C[\varepsilon_i]\) are the canonical isomorphism and
  its inverse with sign \((-1)^{\varepsilon_i}\), respectively;
  the sign is necessary because the exterior product is
  \emph{graded} commutative.  Hence the dual of~\(A\) is
  isomorphic to~\(A\), with unit and counit
  \[
  \C\leftrightarrows A\otimes A
  \cong \bigoplus_{i,j=1}^n A_i\otimes A_j
  \]
  the sum of the canonical isomorphisms \(\C\leftrightarrows
  A_i\oplus A_i\), up to signs, and the zero maps
  \(\C\leftrightarrows A_i\oplus A_j\) for \(i\neq j\).  Thus
  the Lefschetz index of~\(f\) is the sum \(\sum_{i=1}^n
  (-1)^{\varepsilon_i} f_{ii}[\varepsilon_i]\) as an element in
  \(S^{-1}\KK^G_0(\C,\C)\).  This is exactly the supertrace
  of~\(f\) acting on \(S^{-1}\K^G_*(A) \cong \bigoplus_{i=1}^n
  S^{-1}\Rep(G)[\varepsilon_i]\).
\end{proof}

Let~\(G\) be a general compact Lie group.  Let~\(C_G\) denote the set
of conjugacy classes of Cartan subgroups of~\(G\) in the sense of
\cite{Segal:Representation_ring}*{Definition 1.1}.  Such subgroups
correspond bijectively to conjugacy classes of cyclic subgroups in the
finite group~\(G/G^0\), where~\(G^0\) denotes the connected component
of the identity element in~\(G\).  Thus~\(C_G\) is a non-empty, finite
set, and it has a single element if and only if~\(G\) is connected.

The support of a prime ideal~\(\prid\) in~\(\Rep(G)\) is defind
in~\cite{Segal:Representation_ring} as the smallest subgroup~\(H\)
such that~\(\prid\) comes from a prime ideal in~\(\Rep(H)\) via the
restriction map \(\Rep(G)\to\Rep(H)\).  Given any Cartan
subgroup~\(H\), there is a unique minimal prime ideal with
support~\(H\), and this gives a bijection between~\(C_G\) and the set
of minimal prime ideals in~\(\Rep(G)\) (see
\cite{Segal:Representation_ring}*{Proposition 3.7}).

More precisely, if \(H\subseteq G\) is a Cartan subgroup, then~\(H\)
is topologically cyclic and hence \(H\cong \T^r\times \Z/k\) for some
\(r\ge0\), \(k\ge1\).  We described a prime ideal~\((\Phi_k)\)
in~\(\Rep(H)\) before Lemma~\ref{lem:GH_zero_in_localisation}, and its
preimage in~\(\Rep(G)\) is a minimal prime ideal~\(\prid_H\)
in~\(\Rep(G)\).

The total ring of fractions \(S^{-1}\Rep(G)\) is a product of fields
by Lemma~\ref{lem:localised_RepG}.  We can make this more explicit:
\[
S^{-1}\Rep(G) \cong \prod_{H\in C_G} F(\Rep(G)/\prid_H),
\]
where~\(F(\blank)\) denotes the field of fractions for an integral
domain.

\begin{definition}
  \label{def:trace_at_H}
  Let~\(A\) be dualisable in \(\Boot^G\subseteq\KK^G\), let
  \(\varphi\in\KK^G_0(A,A)\), and let \(H\in C_G\).  Let \(F\defeq
  F(\Rep(G)/\prid_H)\) and let \(\K_H(A) \defeq \K^H_*(A)
  \otimes_{\Rep(H)} F\), considered as a \(\Z/2\)-graded \(F\)\nb-vector
  space.  Let \(\K_H(\varphi)\) be the grading-preserving \(F\)\nb-linear
  endomorphism of~\(\K_H(A)\) induced by~\(\varphi\).
\end{definition}

\begin{theorem}
  \label{the:trace_compact_Lie_group}
  Let~\(A\) be dualisable in \(\Boot^G\subseteq\KK^G\), let
  \(\varphi\in\KK^G_0(A,A)\), and let \(H\in C_G\).  Then the image of
  \(\tr(\varphi)\) in \(F(\Rep(G)/\prid_H)\) is the supertrace of
  \(\K_H(\varphi)\).
\end{theorem}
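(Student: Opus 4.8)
The key observation is that Theorem~\ref{the:trace_on_genUnit}, once proved, already establishes this result for \emph{connected} compact Lie groups and for objects in the thick subcategory generated by~\(\C\); the present theorem extends this to arbitrary compact~\(G\) and to the full dualisable part of the bootstrap class, at the cost of recording only the image in each factor \(F(\Rep(G)/\prid_H)\) rather than in all of \(S^{-1}\Rep(G)\). So the strategy is to reduce the statement, one Cartan class~\(H\) at a time, to a situation where \(\thick{\C}\) is the whole relevant category. First I would fix \(H\in C_G\) with \(H\cong\T^r\times\Z/k\) and localise \(\KK^G\) at the minimal prime \(\prid_H\), obtaining a category whose unit has graded endomorphism ring the field \(F[\beta,\beta^{-1}]\) with \(F=F(\Rep(G)/\prid_H)\). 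As in Theorem~\ref{the:trace_on_genUnit}, the trace \(\tr(\varphi)\in\Rep(G)\) maps to the Lefschetz index of the localised \(\varphi\), and because the localised endomorphism ring of~\(\C\) is a field, the argument of Proposition~\ref{pro:KKG_localisation_loc_cyc} applies verbatim once one knows the localised \(A\) lies in \(\thick{\C}\).

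**The main point: identifying the \(\K\)-theory after localisation.** The heart of the proof is to show two things in the localisation of \(\Boot^G_d\) at \(\prid_H\): (i) every object becomes a finite direct sum of suspensions of~\(\C\); and (ii) the functor computing this direct-sum decomposition sends \(\K^G_*(A)\otimes_{\Rep(G)}F\) to \(\K^H_*(A)\otimes_{\Rep(H)}F\), so that the resulting supertrace is the one in Definition~\ref{def:trace_at_H}. For~(i), the restriction functor \(\KK^G\to\KK^H\) maps \(\Boot^G\) to \(\Boot^H\) (Corollary~\ref{cor:res_ind_bootstrap}) and \(\prid_H\) is the contraction of the prime \((\Phi_k)\subseteq\Rep(H)\) described before Lemma~\ref{lem:GH_zero_in_localisation}; applying Theorem~\ref{the:bootstrap_top_cyclic} and Lemma~\ref{lem:GH_zero_in_localisation} to the topologically cyclic group~\(H\), every dualisable object of \(\Boot^H\) becomes, after localising at \((\Phi_k)\), a finite sum of suspensions of~\(\C\) (Proposition~\ref{pro:KKG_localisation_loc_cyc}). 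For~(ii), the point is that the group-valued invariant which the decomposition reads off is \(\KK^H_*(\C,\blank)\otimes F = \K^H_*(\blank)\otimes_{\Rep(H)}F\), and the Lefschetz index of the localised \(\varphi\) is the supertrace of the induced endomorphism of this graded \(F\)-vector space, exactly as at the end of the proof of Theorem~\ref{the:trace_on_genUnit} but with \(\K^H_*\) in place of \(\K^G_*\). Finally, since the image of \(\tr(\varphi)\) in \(F(\Rep(G)/\prid_H)\) is the trace computed in this localisation, and the restriction map \(\Rep(G)\to\Rep(H)\) is compatible with passing to fractions at \(\prid_H\) and \((\Phi_k)\), the two sides agree.

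**Where the obstacle lies.** The routine parts are the localisation formalism and the reuse of the linear-algebra argument from Proposition~\ref{pro:KKG_localisation_loc_cyc}. The real work is the bookkeeping in step~(ii): one must check that, after restricting to~\(H\) and localising, the \(\Rep(G)\)-module \(\K^G_*(A)\) and the \(\Rep(H)\)-module \(\K^H_*(A)\) give \emph{the same} \(F\)-vector space — which holds because \(F=F(\Rep(G)/\prid_H)\) sits inside \(F(\Rep(H)/(\Phi_k))\) as the fraction field generated by the image of \(\Rep(G)\), and because \(\Rep(H)\) localised at \((\Phi_k)\) is already that fraction field (the torus part contributes invertibly and the cyclotomic factor \(\Phi_k\) cuts \(\Rep(\Z/k)\) down to \(\Q(\theta_k)\)); so localising \(\K^H_*(A)\) at \((\Phi_k)\) or base-changing \(\K^G_*(A)\) to \(F\) yield canonically isomorphic objects, intertwining the two induced endomorphisms. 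Making this identification precise, and ensuring it is natural in~\(\varphi\) so that the supertraces literally match, is the one step that requires care rather than invocation.
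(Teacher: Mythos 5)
Your route is the paper's: factor the map \(\Rep(G)\to F(\Rep(G)/\prid_H)\) through the restriction \(\Rep(G)\to\Rep(H)\), use that restriction is a symmetric monoidal functor preserving the bootstrap class (Corollary~\ref{cor:res_ind_bootstrap}) and hence Lefschetz indices, then localise \(\KK^H\) at the prime coming from \((\Phi_k)\subseteq\Rep(H)\), invoke Proposition~\ref{pro:KKG_localisation_loc_cyc} for the topologically cyclic group~\(H\) to write the localised \(A\) as a finite direct sum of suspensions of~\(\C\), and read off the trace as a supertrace exactly as in the proof of Theorem~\ref{the:trace_on_genUnit}, with \(\K^H_*\) in place of \(\K^G_*\). (Your opening idea of localising \(\KK^G\) itself at \(\prid_H\) and hoping the localised \(A\) lands in \(\thick{\C}\) would fail for disconnected~\(G\): for \(G=\Z/2\) the dualisable bootstrap object \(\Cont(G)\) does not become a sum of suspensions of~\(\C\) in that localisation, which is precisely why one must restrict to~\(H\) first --- as you then do in your main step.)

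The genuine problem is the claim you single out as ``the real work'': that after localisation the \(\Rep(G)\)-module \(\K^G_*(A)\) and the \(\Rep(H)\)-module \(\K^H_*(A)\) give the same \(F\)\nb-vector space, because \(\Rep(H)_{(\Phi_k)}\) is allegedly the fraction field generated by the image of \(\Rep(G)\). Both assertions are false. In general \(F(\Rep(G)/\prid_H)\) is a \emph{proper} subfield of \(F(\Rep(H)/(\Phi_k))\): already for connected~\(G\) with maximal torus \(H=T\) one has the Weyl-invariant subfield, e.g.\ \(\Q(x+x^{-1})\subsetneq\Q(x)\) for \(G=\mathrm{SU}(2)\). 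And \(\K^G_*(A)\otimes_{\Rep(G)}F\) is not canonically isomorphic to the localisation of \(\K^H_*(A)\): for \(G=\Z/2\), \(H=\{1\}\), \(A=\Cont(G)\), the first space is one-dimensional and the second two-dimensional, and translation by the nontrivial group element has supertrace \(1\) on the former but \(0\) --- the correct value of the Lefschetz index --- on the latter; this is exactly the content of Example~\ref{exa:pt_does_not_generate_for_Ztwo}, which shows that \(\K^G_*\) cannot compute the trace. Fortunately the identification is not needed: the statement of Theorem~\ref{the:trace_compact_Lie_group} involves only \(\K^H_*(A)\), and once \(\varphi\) has been restricted to~\(H\) the supertrace is read off directly from the decomposition of the localised object, so \(\K^G_*(A)\) never enters. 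The only field bookkeeping actually required is the observation that the image of \(\tr(\varphi)\) in \(\Rep(H)_{(\Phi_k)}\cong F(\Rep(H)/(\Phi_k))\) factors through the subfield \(F(\Rep(G)/\prid_H)\), since \(\prid_H\) is the preimage of \((\Phi_k)\). Deleting your final bridging paragraph leaves essentially the paper's proof.
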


\begin{proof}
  The map \(\Rep(G)\to F(\Rep(G)/\prid_H)\) factors through the
  restriction homomorphism \(\Rep(G)\to\Rep(H)\) because~\(\prid_H\)
  is supported in~\(H\).  Restricting the group action to~\(H\) maps
  the bootstrap category in~\(\KK^G\) into the bootstrap category
  in~\(\KK^H\) by Corollary~\ref{cor:res_ind_bootstrap}, and
  commutes with taking Lefschetz indices because restriction is a tensor
  functor.  Hence we may
  replace~\(G\) by~\(H\) and take \(\varphi\in\KK^H_0(A,A)\) throughout.

  Since~\(H\) is topologically cyclic,
  Proposition~\ref{pro:KKG_localisation_loc_cyc} applies.  It shows
  that in the localisation of~\(\KK^H\) at~\(\prid_H\), any dualisable
  object in~\(\Boot^G\) becomes isomorphic to a finite direct sum of
  suspensions of~\(\C\).  Now the argument continues as in the proof
  of Theorem~\ref{the:trace_on_genUnit} above.
\end{proof}

\section{Hattori--Stallings traces}
\label{sec:Hattori-Stallings}

Before we found the above approach through localisation, we
developed a different trace formula where, in the case of a Hodgkin
Lie group, the trace is identified with the Hattori--Stallings trace
of the \(\Rep(G)\)-module map~\(\K^G_*(f)\) on~\(\K^G_*(A)\).  We
briefly sketch this alternative formula here, although the
localisation approach above seems much more useful for computations.
The Hattori--Stallings trace has the advantage that it obviously
belongs to~\(\Rep(G)\).

We work in the general setting of a tensor triangulated category
\((\Tri,\otimes,\Unit)\).  We assume that~\(\Tri\) satisfies
additivity of traces, that is:

\begin{assumption}
  \label{ass:additivity_trace}
  Let \(A\to B\to C\to A[1]\) be an exact triangle in~\(\Tri\) and
  assume that \(A\) and~\(B\) are dualisable.  Assume also that the
  left square in the following diagram
  \[
  \begin{tikzpicture}[baseline=(current bounding box.west)]
    \matrix (m) [cd]{
      A&B&C&A[1]\\
      A&B&C&A[1]\\
    };
    \begin{scope}[cdar]
      \draw (m-1-1) -- (m-1-2);
      \draw (m-1-2) -- (m-1-3);
      \draw (m-1-3) -- (m-1-4);
      \draw (m-2-1) -- (m-2-2);
      \draw (m-2-2) -- (m-2-3);
      \draw (m-2-3) -- (m-2-4);
      \draw (m-1-1) -- node {$f_A$} (m-2-1);
      \draw (m-1-2) -- node {$f_B$} (m-2-2);
      \draw[dotted] (m-1-3) -- node {$f_C$} (m-2-3);
      \draw (m-1-4) -- node {$f_A[1]$} (m-2-4);
    \end{scope}
  \end{tikzpicture}
  \]
  commutes.  Then~\(C\) is dualisable and there is an arrow
  \(f_C\colon C\to C\) such that the
  whole diagram commutes and \(\tr(f_C) - \tr(f_B) + \tr(f_A) = 0\).
\end{assumption}

Additivity of traces holds in the bootstrap category
\(\Boot^G\subseteq\KK^G\).  The quickest way to check this is the
localisation formula for the trace in
Theorem~\ref{the:trace_compact_Lie_group}.  It shows that~\(\Boot^G\)
satisfies even more: \(\tr(f_C) - \tr(f_B) + \tr(f_A) = 0\) holds for
\emph{any} arrow~\(f_C\) that makes the diagram commute.

There are several more direct ways to verify additivity of traces, but
all require significant work which we do not want to get into here.
The axioms worked out by J.~Peter May in~\cite{May:Additivity} are
lengthy and therefore rather unpleasant to check by hand.  In a
previous manuscript we embedded the localising subcategory
of~\(\KK^G\) generated by~\(\C\) into a category of module spectra.
Since additivity is known for categories of module spectra, this
implies the required additivity result at least for this smaller
subcategory.  Another way would be to show that additivity of traces
follows from the derivator axioms and to embed~\(\KK^G\) into a
triangulated derivator.

In the following, we will just assume additivity of traces and use it
to compute the trace.  Let
\[
R\defeq \Tri_*(\Unit,\Unit) = \bigoplus_{n\in\Z} \Tri_n(\Unit,\Unit)
\]
be the graded endomorphism ring of the tensor unit.  It is
graded-commutative provided~\(\Tri\) satisfies some very basic
compatibility axioms; see~\cite{Suarez-Alvarez:Comm} for
details.

If~\(A\) is any object of~\(\Tri\), then \(M(A)\defeq \Tri_*(\Unit,A)
= \bigoplus_{n\in\Z} \Tri_n(\Unit,A)\) is an \(R\)\nb-module in a
canonical way, and an endomorphism \(f\in\Tri_n(A,A)\) yields a
degree-\(n\) endomorphism \(M(f)\) of \(M(A)\).  We will prove in
Theorem~\ref{the:HS_trace} below that, under some assumptions, the
trace of~\(f\) equals the Hattori--Stallings trace of \(M(f)\) and, in
particular, depends only on \(M(f)\).

Before we can state our theorem, we must define the Hattori--Stallings
trace for endomorphisms of graded modules over graded rings.  This is
well-known for ungraded rings (see~\cite{Bass:Euler_discrete}).  The
grading causes some notational overhead.  Let~\(R\) be a (unital)
graded-commutative graded ring.  A finitely generated free
\(R\)\nb-module is a direct sum of copies of \(R[n]\), where~\(R[n]\)
denotes~\(R\) with degree shifted by~\(n\), that is
\(R[n]_i=R_{n+i}\).  Let \(F\colon P\to P\) be
a module endomorphism of such a free module, let us assume that~\(F\)
is homogeneous of degree~\(d\).  We use an isomorphism
\begin{equation}
  \label{eq:iso_free}
  P\cong\bigoplus_{i=1}^r R[n_i]
\end{equation}
to rewrite~\(F\) as a matrix \((f_{ij})_{1\le i,j\le r}\) with
\(R\)\nb-module homomorphisms \(f_{ij}\colon R[n_j]\to R[n_i]\) of
degree~\(d\).  The entry~\(f_{ij}\) is given by right multiplication
by some element of~\(R\) of degree \(n_i-n_j+d\).  The (super)trace
\(\tr F\) is defined as
\[
\tr F \defeq \sum_{i=1}^r (-1)^{n_i} \tr f_{ii} ;
\]
this is an element of~\(R\) of degree~\(d\).

It is straightforward to check that~\(\tr F\) is well-defined, that
is, independent of the choice of the isomorphism
in~\eqref{eq:iso_free}.  Here we use that the degree-zero part
of~\(R\) is central in~\(R\) (otherwise, we still get a well-defined
element in the commutator quotient \(R_d/[R_d,R_0]\)).  Furthermore,
if we shift the grading on~\(P\) by~\(n\), then the trace is
multiplied by the sign \((-1)^n\) -- it is a supertrace.

If~\(P\) is a finitely generated projective graded \(R\)\nb-module,
then \(P\oplus Q\) is finitely generated and free for some~\(Q\), and
for an endomorphism~\(F\) of~\(P\) we let
\[
\tr F \defeq \tr (F\oplus 0\colon P\oplus Q\to P\oplus Q).
\]
This does not depend on the choice of~\(Q\).

A \emph{finite projective resolution} of a graded
\(R\)\nb-module~\(M\) is a resolution
\begin{equation}
  \label{eq:finite_resolution}
  \dotsb \to P_\ell \xrightarrow{d_\ell}
  P_{\ell-1} \xrightarrow{d_{\ell-1}}
  \dotsb \xrightarrow{d_1} P_0 \xrightarrow{d_0} M
\end{equation}
of finite length by finitely generated projective graded
\(R\)\nb-modules~\(P_j\).  \emph{We assume that the maps~\(d_j\) have
  degree one} (or at least odd degree).  Assume that~\(M\) has such a
resolution and let \(f\colon M\to M\) be a module homomorphism.
Lift~\(f\) to a chain map \(f_j\colon P_j\to P_j\),
\(j=0,\dotsc,\ell\).  We define the \emph{Hattori--Stallings trace}
of~\(f\) as
\[
\tr(f) = \sum_{j=0}^\ell \tr(f_j).
\]
It may be shown that this trace does not depend on the choice of
resolution.  It is important for this that we choose~\(d_j\) of degree
one.  Since shifting the degree by one alters the sign of the trace of
an endomorphism, the sum in the definition of the trace becomes an
\emph{alternating} sum when we change conventions to have even-degree
boundary maps~\(d_j\).  Still the trace changes sign when we shift the
degree of~\(M\).

\begin{theorem}
  \label{the:HS_trace}
  Let \(F\in\Tri(A,A)\) be an endomorphism of some object~\(A\)
  of~\(\Tri\).  Assume that~\(A\) belongs to the localising
  subcategory of~\(\Tri\) generated by~\(\Unit\).  If the graded
  \(R\)\nb-module \(M(A)\defeq \Tri_*(\Unit,A)\) has a finite
  projective resolution, then~\(A\) is dualisable in~\(\Tri\) and the
  trace of~\(F\) is equal to the Hattori--Stallings trace of the
  induced module endomorphism \(\Tri_*(\Unit,f)\) of~\(M(A)\).
\end{theorem}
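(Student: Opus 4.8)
The plan is to realise \(A\) as a finite iterated mapping cone of (retracts of) finite direct sums of suspensions of \(\Unit\), following a finite projective resolution of \(M(A)\defeq\Tri_*(\Unit,A)\) over \(R\defeq\Tri_*(\Unit,\Unit)\), and then to read off \(\tr(F)\) by iterating additivity of traces (Assumption~\ref{ass:additivity_trace}). The enabling observation is a corepresentability property. Write \(M(\blank)=\Tri_*(\Unit,\blank)\). If \(P\) is a finite direct sum of suspensions of \(\Unit\), or --- since the localising subcategory generated by \(\Unit\) is thick and idempotent complete (B\"okstedt--Neeman) --- a retract of such a sum, then \(P\) is dualisable, the map \(f\mapsto M(f)\) from \(\Tri(P,B)\) to \(\Hom_R\bigl(M(P),M(B)\bigr)\) is bijective for every object \(B\) (this is clear for \(P=\Unit\) and passes to finite sums and to retracts), and the categorical trace of an endomorphism \(g\) of \(P\) equals the Hattori--Stallings trace of \(M(g)\) (true for sums of suspensions of \(\Unit\) by the computation in the proof of Theorem~\ref{the:trace_on_genUnit}, and both traces respect retracts). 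Consequently every finitely generated projective graded \(R\)\nb-module \(P\) equals \(M(\tilde P)\) for such a retract \(\tilde P\), obtained by lifting the idempotent defining \(P\) as a retract of a finite free module along the ring isomorphism \(\End_\Tri(Q)_0\cong\End_R\bigl(M(Q)\bigr)_0\). We fix the convention in which the boundary maps of all resolutions have even degree, so that \(\tr_{\mathrm{HS}}\) is the alternating sum \(\sum_j(-1)^j\tr(f_j)\); by the remarks preceding the theorem this computes the same element as the degree-one convention.

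We induct on the length \(\ell\) of a finite projective resolution \(0\to P_\ell\to\dotsb\to P_0\xrightarrow{d_0}M(A)\). If \(M(A)=0\) the claim is trivial, since then \(A=0\): the localising subcategory generated by \(\Unit\) contains no nonzero object \(X\) with \(\Tri_*(\Unit,X)=0\), as \(\Unit\) is a weak generator of it --- automatic in the cases of interest, e.g.\ because \(\C\) is compact\(_{\aleph_1}\) in \(\KK^G\). Otherwise realise \(P_0=M(\tilde P_0)\) and use corepresentability to lift the surjection \(d_0\) to a morphism \(\phi_0\colon\tilde P_0\to A\) with \(M(\phi_0)=d_0\); form the mapping cone \(\tilde P_0\xrightarrow{\phi_0}A\to C_0\to\tilde P_0[1]\). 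Since \(M(\phi_0)\) is surjective, the long exact sequence for \(M(\blank)\) identifies \(M(C_0)\) with \(\Omega_1[s]\), where \(\Omega_1=\ker d_0=\operatorname{im}(P_1\to P_0)\) is the first syzygy and \(s=\pm1\) records the suspension shift. As \(\Omega_1\) has a finite projective resolution of length \(\ell-1\) and \(C_0\) again lies in the localising subcategory generated by \(\Unit\), the inductive hypothesis applies to \(C_0\): it is dualisable, and the asserted trace formula holds for its endomorphisms. Rotating the triangle to \(C_0[-1]\to\tilde P_0\to A\to C_0\) displays \(A\) as the cone of a map between the dualisable objects \(C_0[-1]\) and \(\tilde P_0\), so Assumption~\ref{ass:additivity_trace} (applied with identity endomorphisms) shows that \(A\) is dualisable.

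Given \(F\colon A\to A\), lift \(M(F)\) along the surjection \(d_0\) to an endomorphism \(\bar f\colon P_0\to P_0\) with \(d_0\bar f=M(F)d_0\); then \(\bar f\) restricts to an endomorphism \(\bar f|_{\Omega_1}\) of the syzygy. Corepresentability produces the unique \(f_{\tilde P_0}\colon\tilde P_0\to\tilde P_0\) with \(M(f_{\tilde P_0})=\bar f\), and injectivity of \(M\) on \(\Tri(\tilde P_0,A)\) turns \(d_0\bar f=M(F)d_0\) into \(\phi_0 f_{\tilde P_0}=F\phi_0\), so the left square of the additivity diagram commutes. Assumption~\ref{ass:additivity_trace} then supplies \(f_{C_0}\colon C_0\to C_0\) completing the diagram, with \(\tr(f_{C_0})-\tr(F)+\tr(f_{\tilde P_0})=0\), and naturality of the connecting homomorphism forces \(M(f_{C_0})=(\bar f|_{\Omega_1})[s]\). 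Now \(\tr(f_{\tilde P_0})=\tr_{\mathrm{HS}}(\bar f\colon P_0\to P_0)\) by corepresentability, while \(\tr(f_{C_0})=\tr_{\mathrm{HS}}\bigl((\bar f|_{\Omega_1})[s]\bigr)=-\tr_{\mathrm{HS}}(\bar f|_{\Omega_1})\) by the inductive hypothesis, the last equality because a grading shift by one reverses the Hattori--Stallings trace. On the other hand, using \(\bar f\) as the degree-zero term and a lift of \(\bar f|_{\Omega_1}\) as the remaining terms of a chain map covering \(M(F)\) on the resolution \(0\to P_\ell\to\dotsb\to P_0\to M(A)\), the alternating-sum formula gives \(\tr_{\mathrm{HS}}(M(F))=\tr_{\mathrm{HS}}(\bar f\colon P_0\to P_0)-\tr_{\mathrm{HS}}(\bar f|_{\Omega_1})=\tr(f_{\tilde P_0})+\tr(f_{C_0})=\tr(F)\), which is the assertion.

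The step I expect to be the main obstacle is the sign bookkeeping in the final identification: one must reconcile two distinct supertraces --- the categorical trace, which changes sign under the suspension \([1]\), and the Hattori--Stallings trace, which changes sign under a grading shift --- across the degree shift \(s\) that is picked up when passing from \(A\) to the cone \(C_0\). This is exactly what the parity convention on the boundary maps is designed to control, and it is where the induction must be set up with care. A secondary point, used only in the base case, is the input that a finite projective resolution of \(M(A)\) prevents \(A\) from being a phantom object in the localising subcategory generated by \(\Unit\); this is automatic once \(\Unit\) is compact\(_{\aleph_1}\), as for \(\Tri=\KK^G\).
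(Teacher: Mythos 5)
Your proof is correct and follows essentially the same route as the paper: you lift the finite projective resolution of \(M(A)\) into \(\Tri\) by corepresenting finitely generated projective \(R\)\nb-modules as retracts of finite sums of suspensions of \(\Unit\), peel off one projective at a time using Assumption~\ref{ass:additivity_trace}, and identify the trace on each projective piece with the matrix supertrace by reduction to \(\Unit\). The only real difference is presentational: where the paper invokes the phantom tower of \cite{Meyer:Homology_in_KK_II} (whose stages \(N_j\) are exactly your iterated cones, i.e.\ the lifted syzygies), you construct the tower by hand through an induction on the length of the resolution, with the same additivity recursion and the same sign bookkeeping under the shift of conventions for the boundary maps.
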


\begin{proof}
  Our main tool is the phantom tower over~\(A\), which is constructed
  in~\cite{Meyer:Homology_in_KK_II}.  We recall some details of this
  construction.

  Let~\(M^\bot\) be the functor from finitely generated projective
  \(R\)\nb-modules to~\(\Tri\) defined by the adjointness property
  \(\Tri(M^\bot(P),B) \cong \Tri(P,M(B))\) for all \(B\in\Tri\).
  The functor~\(M^\bot\) maps the free rank-one module~\(R\)
  to~\(\Unit\), is additive, and commutes with suspensions; this
  determines~\(M^\bot\) on objects.  Since \(R=\Tri_*(\Unit,\Unit)\),
  \(\Tri_*(M^\bot(P_1),M^\bot(P_2))\) is isomorphic (as a graded
  Abelian group) to the space of \(R\)\nb-module homomorphisms
  \(P_1\to P_2\).  Furthermore, we have canonical isomorphisms
  \(M\bigl(M^\bot(P)\bigr) \cong P\) for all finitely generated
  projective \(R\)\nb-modules~\(P\).

  By assumption, \(M(A)\) has a finite projective resolution as
  in~\eqref{eq:finite_resolution}.  Using~\(M^\bot\), we lift
  it to a chain complex in~\(\Tri\), with entries \(\hat{P}_j
  \defeq M^\bot(P_j)\) and boundary maps \(\hat{d}_j\defeq
  M^\bot(d_j)\) for \(j\ge1\).  The map \(\hat{d}_0\colon
  \hat{P_0}\to A\) is the pre-image of~\(d_0\) under the
  adjointness isomorphism \(\Tri(M^\bot(P),B) \cong
  \Tri(P,M(B))\).  We get back the resolution of modules by
  applying~\(M\) to the chain complex
  \((\hat{P}_j,\hat{d}_j)\).

  Next, it is shown in~\cite{Meyer:Homology_in_KK_II} that we may
  embed this chain complex into a diagram
  \begin{equation}
    \label{eq:phantom_tower}
    \begin{tikzpicture}[baseline=(current bounding box.west)]
      \matrix (m) [cd,column sep=1.5em]{
        A=N_0&&N_1&&N_2&&N_3&&\cdots\\
        &\hat{P}_0&&\hat{P}_1&&\hat{P}_2&&\hat{P}_3&&\cdots\\
      };
      \begin{scope}[cdar]
        \draw (m-1-1) -- node {$\iota_0^1$} (m-1-3);
        \draw (m-1-3) -- node {$\iota_1^2$} (m-1-5);
        \draw (m-1-5) -- node {$\iota_2^3$} (m-1-7);
        \draw (m-1-7) -- (m-1-9);

        \draw[odd] (m-1-3) -- node[swap] {$\varepsilon_0$} (m-2-2);
        \draw[odd] (m-1-5) -- node[swap] {$\varepsilon_1$} (m-2-4);
        \draw[odd] (m-1-7) -- node[swap] {$\varepsilon_2$} (m-2-6);
        \draw[odd] (m-1-9) -- (m-2-8);

        \draw (m-2-2) -- node {$\hat{d}_0=  \pi_0$} (m-1-1);
        \draw (m-2-4) -- node {$\pi_1$} (m-1-3);
        \draw (m-2-6) -- node {$\pi_2$} (m-1-5);
        \draw (m-2-8) -- node {$\pi_3$} (m-1-7);

        \draw[odd] (m-2-4) -- node {$\hat{d}_1$} (m-2-2);
        \draw[odd] (m-2-6) -- node {$\hat{d}_2$} (m-2-4);
        \draw[odd] (m-2-8) -- node {$\hat{d}_3$} (m-2-6);
        \draw[odd] (m-2-10) -- (m-2-8);
      \end{scope}
    \end{tikzpicture}
  \end{equation}
  where the wriggly lines are maps of degree one; the triangles
  involving~\(\hat{d}_j\) commute; and the other triangles are
  exact.  This diagram is called the \emph{phantom tower}
  in~\cite{Meyer:Homology_in_KK_II}.

  Since \(\hat{P}_j=0\) for \(j>\ell\), the maps~\(\iota_j^{j+1}\) are
  invertible for \(j>\ell\).  Furthermore, a crucial property of the
  phantom tower is that these maps~\(\iota_j^{j+1}\) are \emph{phantom
    maps}, that is, they induce the zero map on
  \(\Tri_*(\Unit,\blank)\).  Together, these facts imply that
  \(M(N_j)=0\) for \(j>\ell\).  Since we assumed~\(\Unit\) to be a
  generator of~\(\Tri\), this further implies \(N_j=0\) for
  \(j>\ell\).
  Therefore, $A\in \langle \Unit\rangle$, so that~$A$ is
  dualisable as claimed.

  Next we recursively extend the endomorphism~\(F\) of \(A=N_0\) to an
  endomorphism of the phantom tower.  We start with \(F_0=F\colon
  N_0\to N_0\).  Assume \(F_j\colon N_j\to N_j\) has been constructed.
  As in~\cite{Meyer:Homology_in_KK_II}, we may then lift~\(F_j\) to a
  map \(\hat{F}_j\colon \hat{P}_j\to \hat{P}_j\) such that the square
  \[
  \begin{tikzpicture}[baseline=(current bounding box.west)]
      \matrix (m) [cd]{
      \hat{P}_j&N_j\\
      \hat{P}_j&N_j\\
    };
    \begin{scope}[cdar]
      \draw (m-1-1) -- node {$\pi_j$} (m-1-2);
      \draw (m-1-1) -- node {$\hat{F}_j$} (m-2-1);
      \draw (m-2-1) -- node {$\pi_j$} (m-2-2);
      \draw (m-1-2) -- node {$F_j$} (m-2-2);
    \end{scope}
  \end{tikzpicture}
  \]
  commutes.  Now we apply additivity of traces
  (Assumption~\ref{ass:additivity_trace}) to construct an
  endomorphism
  \(F_{j+1}\colon N_{j+1}\to N_{j+1}\) such that
  \((\hat{F}_j,F_j,F_{j+1})\) is a triangle morphism and \(\tr (F_j) =
  \tr (\hat{F}_j) + \tr (F_{j+1})\).  Then we repeat the recursion
  step with~\(F_{j+1}\) and thus construct a sequence of
  maps~\(F_j\).  We get
  \[
  \tr(F)
  = \tr(F_0)
  = \tr(\hat{F}_0) + \tr(F_1)
  = \dotsb
  = \tr(\hat{F}_0) + \dotsb + \tr(\hat{F}_\ell) +  \tr(F_{\ell+1}).
  \]
  Since \(N_{\ell+1}=0\), we may leave out the last term.

  Finally, it remains to observe that the trace
  of~\(\hat{F}_j\) as an endomorphism of~\(\hat{P}_j\) agrees
  with the trace of the induced map on the projective
  module~\(P_j\).  Since both traces are additive with respect
  to direct sums of maps, the case of general finitely
  generated projective modules reduces first to free modules
  and then to free modules of rank one.  Both traces change by
  a sign if we suspend or desuspend once, hence we reduce to
  the case of endomorphisms of~\(\Unit\), which is trivial.
  Hence the computation above does indeed yield the
  Hattori--Stallings trace of~\(M(A)\) as asserted.
\end{proof}

\begin{remark}
  Note that if a module has a finite projective resolution, then it
  must be finitely generated.  Conversely, if the graded ring~\(R\) is
  \emph{coherent} and \emph{regular}, then any finitely generated
  module has a finite projective resolution.  (Regular means that every
  finitely generated module has a finite \emph{length} projective resolution;
  coherent means that every finitely generated homogeneous ideal is
  finitely presented~-- for instance, this holds if~\(R\) is (graded)
  Noetherian; coherence implies that any finitely generated graded
  module has a resolution by finitely generated projectives.)

  Moreover, if~\(R\) is coherent then the finitely presented
  \(R\)-modules form an abelian category, and this implies (by
  an easy induction on the triangular length of~$A$) that for
  every \(A\in \langle \Unit \rangle =(\langle \Unit \rangle_\loc)_d\)
  the module~\(M(A)\) is finitely presented and thus \emph{a
    fortiori} finitely generated.  If~\(R\) is also
  regular, each such~\(M(A)\) has a finite projective resolution.

  In conclusion: if~\(R\) is regular and coherent, an object \(A\in
  \langle \Unit \rangle_\loc\) is dualisable if and only if the
  graded \(R\)\nb-module~\(M(A)\)
  has a finite projective resolution.
\end{remark}

\begin{bibdiv}
  \begin{biblist}
 \bib{Atiyah-Macdonald:Commutative}{book}{
  author={Atiyah, Michael F.},
  author={Macdonald, Ian G.},
  title={Introduction to commutative algebra},
  publisher={Addison-Wesley Publishing Co., Reading, Mass.-London-Don Mills, Ont.},
  date={1969},
  pages={ix+128},
  review={\MRref {0242802}{39\,\#4129}},
}

\bib{Baaj-Skandalis:Hopf_KK}{article}{
  author={Baaj, Saad},
  author={Skandalis, Georges},
  title={\(C^*\)\nobreakdash -alg\`ebres de Hopf et th\'eorie de Kasparov \'equivariante},
  language={French, with English summary},
  journal={\(K\)-Theory},
  volume={2},
  date={1989},
  number={6},
  pages={683--721},
  issn={0920-3036},
  review={\MRref {1010978}{90j:46061}},
  doi={10.1007/BF00538428},
}

\bib{Bass:Euler_discrete}{article}{
  author={Bass, Hyman},
  title={Euler characteristics and characters of discrete groups},
  journal={Invent. Math.},
  volume={35},
  date={1976},
  pages={155--196},
  issn={0020-9910},
  review={\MRref {0432781}{55\,\#5764}},
  doi={10.1007/BF01390137},
}

\bib{Baum-Block:Excess}{article}{
  author={Baum, Paul},
  author={Block, Jonathan},
  title={Excess intersection in equivariant bivariant \(K\)\nobreakdash -theory},
  language={English, with French summary},
  journal={C. R. Acad. Sci. Paris S\'er. I Math.},
  volume={314},
  date={1992},
  number={5},
  pages={387--392},
  issn={0764-4442},
  review={\MRref {1153721}{93b:19005}},
}

\bib{Block-Higson:Weyl}{article}{
  author={Block, Jonathan},
  author={Higson, Nigel},
  title={Weyl character formula in KK-theory},
  book={ title={Noncommutative Geometry and Physics~3}, date={2013}, editor={Dito, Giuseppe}, editor={Kotani, Motoko}, editor={Maeda, Yoshiaki}, publisher={World Scientific Publishing Co Pte Ltd}, isbn={978-981-4425-00-1}, },
  note={\arxiv {1206.4266}},
}

\bib{Connes-Skandalis:Longitudinal}{article}{
  author={Connes, Alain},
  author={Skandalis, Georges},
  title={The longitudinal index theorem for foliations},
  journal={Publ. Res. Inst. Math. Sci.},
  volume={20},
  date={1984},
  number={6},
  pages={1139--1183},
  issn={0034-5318},
  review={\MRref {775126}{87h:58209}},
  doi={10.2977/prims/1195180375},
}

\bib{dellAmbrogio:Tensor_triangular}{article}{
  author={Dell'Ambrogio, Ivo},
  title={Tensor triangular geometry and $KK$-theory},
  journal={J. Homotopy Relat. Struct.},
  volume={5},
  date={2010},
  number={1},
  pages={319--358},
  review={\MRref {2812924}{}},
}

\bib{Echterhoff-Emerson-Kim:Duality}{article}{
  author={Echterhoff, Siegfried},
  author={Emerson, Heath},
  author={Kim, Hyun Jeong},
  title={$KK$-theoretic duality for proper twisted actions},
  journal={Math. Ann.},
  volume={340},
  date={2008},
  number={4},
  pages={839--873},
  issn={0025-5831},
  review={\MRref {2372740}{}},
  doi={10.1007/s00208-007-0171-6},
}

\bib{Emerson:Lefschetz}{article}{
  author={Emerson, Heath},
  title={Lefschetz numbers for \(C^*\)\nobreakdash-algebras},
  journal={Canad. Math. Bull.}
volume={54}
  date={2011},
  pages={82--99}
  note={\arxiv{0708.4278}},
}

\bib{Emerson:Localization_circle}{article}{
  author={Emerson, Heath},
  title={Localization techniques in circle-equivariant KK-theory},
  date={2010},
  status={To appear in \emph{M\"unster Journal of Mathematics} (accepted 2012)},
  note={\arxiv{1004.2970}},

}

\bib{Emerson-Meyer:Euler}{article}{
  author={Emerson, Heath},
  author={Meyer, Ralf},
  title={Euler characteristics and Gysin sequences for group actions on boundaries},
  journal={Math. Ann.},
  volume={334},
  date={2006},
  number={4},
  pages={853--904},
  issn={0025-5831},
  review={\MRref {2209260}{2007b:19006}},
  doi={10.1007/s00208-005-0747-y},
}

\bib{Emerson-Meyer:Equivariant_K}{article}{
  author={Emerson, Heath},
  author={Meyer, Ralf},
  title={Equivariant representable K\nobreakdash -theory},
  journal={J. Topol.},
  volume={2},
  date={2009},
  number={1},
  pages={123--156},
  issn={1753-8416},
  review={\MRref {2499440}{}},
  doi={10.1112/jtopol/jtp003},
}

\bib{Emerson-Meyer:Equi_Lefschetz}{article}{
  author={Emerson, Heath},
  author={Meyer, Ralf},
  title={Equivariant Lefschetz maps for simplicial complexes and smooth manifolds},
  date={2009},
  volume={345},
  number={3},
  pages={599--630},
  issn={0025-5831},
  journal={Math. Ann.},
  review={\MRref {2534110}{2010h:19006}},
  doi={10.1007/s00208-009-0367-z},
}

\bib{Emerson-Meyer:Normal_maps}{article}{
  author={Emerson, Heath},
  author={Meyer, Ralf},
  title={Equivariant embedding theorems and topological index maps},
  journal={Adv. Math.},
  volume={225},
  number={5},
  pages={2840--2882},
  date={2010},
  review={\MRref {2680186}{}},
  doi={10.1016/j.aim.2010.05.011},
}

\bib{Emerson-Meyer:Correspondences}{article}{
  author={Emerson, Heath},
  author={Meyer, Ralf},
  title={Bivariant K\nobreakdash -theory via correspondences},
  journal={Adv. Math.},
  volume={225},
  number={5},
  pages={2883--2919},
  date={2010},
  review={\MRref {2680187}{}},
  doi={10.1016/j.aim.2010.04.024},
}

\bib{Emerson-Meyer:Dualities}{article}{
  author={Emerson, Heath},
  author={Meyer, Ralf},
  title={Dualities in equivariant Kasparov theory},
  journal={New York J. Math.},
  issn={1076-9803},
  date={2010},
  volume={16},
  pages={245--313},
  review={\MRref {2740579}{2012b:19004}},
}

\bib{Inassaridze-Kandelaki-Meyer:Finite_Torsion_KK}{article}{
  author={Inassaridze, Hvedri},
  author={Kandelaki, Tamaz},
  author={Meyer, Ralf},
  title={Localisation and colocalisation of $KK$-theory},
  journal={Abh. Math. Semin. Univ. Hambg.},
  volume={81},
  date={2011},
  number={1},
  pages={19--34},
  issn={0025-5858},
  doi={10.1007/s12188-011-0050-7},
  review={\MRref {2812030}{}},
}

\bib{Matsumura:Commutative_ring}{book}{
  author={Matsumura, Hideyuki},
  title={Commutative ring theory},
  series={Cambridge Studies in Advanced Mathematics},
  volume={8},
  note={Translated from the Japanese by M. Reid},
  publisher={Cambridge University Press},
  place={Cambridge},
  date={1986},
  pages={xiv+320},
  isbn={0-521-25916-9},
  review={\MRref {879273}{88h:13001}},
}

\bib{May:Additivity}{article}{
  author={May, J. Peter},
  title={The additivity of traces in triangulated categories},
  journal={Adv. Math.},
  volume={163},
  date={2001},
  number={1},
  pages={34--73},
  issn={0001-8708},
  review={\MRref {1867203}{2002k:18019}},
  doi={10.1006/aima.2001.1995},
}

\bib{Meyer:Homology_in_KK_II}{article}{
  author={Meyer, Ralf},
  title={Homological algebra in bivariant \(\textup K\)\nobreakdash -theory and other triangulated categories. II},
  journal={Tbil. Math. J.},
  volume={1},
  date={2008},
  pages={165--210},
  issn={1875-158X},
  review={\MRref {2563811}{2011c:19012}},
}

\bib{Meyer-Nest:BC_Localization}{article}{
  author={Meyer, Ralf},
  author={Nest, Ryszard},
  title={The Baum--Connes conjecture via localization of categories},
  journal={Lett. Math. Phys.},
  volume={69},
  date={2004},
  pages={237--263},
  issn={0377-9017},
  review={\MRref {2104446}{2005k:19010}},
  doi={10.1007/s11005-004-1831-z},
}

\bib{Meyer-Nest:BC}{article}{
  author={Meyer, Ralf},
  author={Nest, Ryszard},
  title={The Baum--Connes conjecture via localisation of categories},
  journal={Topology},
  volume={45},
  date={2006},
  number={2},
  pages={209--259},
  issn={0040-9383},
  review={\MRref {2193334}{2006k:19013}},
  doi={10.1016/j.top.2005.07.001},
}

\bib{Meyer-Nest:BC_Coactions}{article}{
  author={Meyer, Ralf},
  author={Nest, Ryszard},
  title={An analogue of the Baum--Connes isomorphism for coactions of compact groups},
  journal={Math. Scand.},
  volume={100},
  date={2007},
  number={2},
  pages={301--316},
  issn={0025-5521},
  review={\MRref {2339371}{2008g:19005}},
}

\bib{Meyer-Nest:Homology_in_KK}{article}{
  author={Meyer, Ralf},
  author={Nest, Ryszard},
  title={Homological algebra in bivariant $K$-theory and other triangulated categories. I},
  conference={ title={Triangulated categories}, },
  book={ series={London Math. Soc. Lecture Note Ser.}, editor={Holm, Thorsten}, editor={J\o rgensen, Peter}, editor={Rouqier, Rapha\"el}, volume={375}, publisher={Cambridge Univ. Press}, place={Cambridge}, },
  date={2010},
  pages={236--289},
  review={\MRref {2681710}{}},
}

\bib{Montgomery-Zippen:Subgroups_of_a_compact_group}{article}{
  author={Montgomery, Deane},
  author={Zippen, Leo}
  title={A theorem of Lie groups},
  journal={Bull. Amer. Math. Soc.},
  volume={48}
  number={6},
  date={1942},
  pages={448--452},
}

\bib{Neeman:Triangulated}{book}{
  author={Neeman, Amnon},
  title={Triangulated categories},
  series={Annals of Mathematics Studies},
  volume={148},
  publisher={Princeton University Press},
  place={Princeton, NJ},
  date={2001},
  pages={viii+449},
  isbn={0-691-08685-0},
  isbn={0-691-08686-9},
  review={\MRref {1812507}{2001k:18010}},
}

\bib{Rosenberg-Schochet:Kunneth}{article}{
  author={Rosenberg, Jonathan},
  author={Schochet, Claude},
  title={The K\"unneth theorem and the universal coefficient theorem for equivariant \(\textup K\)\nobreakdash -theory and \(\textup {KK}\)-theory},
  journal={Mem. Amer. Math. Soc.},
  volume={62},
  date={1986},
  number={348},
  issn={0065-9266},
  review={\MRref {0849938}{87k:46147}},
}

\bib{Segal:Representation_ring}{article}{
  author={Segal, Graeme},
  title={The representation ring of a compact Lie group},
  journal={Inst. Hautes \'Etudes Sci. Publ. Math.},
  number={34},
  date={1968},
  pages={113--128},
  issn={0073-8301},
  review={\MRref {0248277}{40\,\#1529}},
  eprint={http://www.numdam.org/item?id=PMIHES_1968__34__113_0},
}

\bib{Suarez-Alvarez:Comm}{article}{
  author={Suarez-Alvarez, Mariano},
  title={The Hilton-Heckmann argument for the anti-commutativity of cup products},
  journal={Proc. Amer. Math. Soc.},
  volume={132},
  date={2004},
  number={8},
  pages={2241--2246},
  issn={0002-9939},
  review={\MRref {2052399}{2005a:18016}},
  doi={10.1090/S0002-9939-04-07409-X},
}

  \end{biblist}
\end{bibdiv}
\end{document}